\newcommand {\R} {\mathbb{R}} 
\newcommand {\Z} {\mathbb{Z}}
\newcommand {\T} {\mathbb{T}}
\newcommand {\p} {\partial}
\newcommand{\bbN}{\mathbb{N}}
\newcommand{\bbR}{\mathbb{R}}
\newcommand{\bbT}{\mathbb{T}}
\newcommand{\bbZ}{\mathbb{Z}}
\newcommand{\eps}{\varepsilon}
\theoremstyle{plain}
\newtheorem{thm}{Theorem}[section]
\newtheorem*{thm*}{Theorem}
\newtheorem{prop}{Proposition}[section]
\newtheorem*{cor*}{Corollary}
\theoremstyle{plain}
\newtheorem{theorem}{Theorem}
\newtheorem{lemma}[theorem]{Lemma}
\newtheorem{pro}[theorem]{Proposition}
\theoremstyle{definition}
\date{\today}
\begin{document}

\title[MHD Stability Threshold]{Sobolev Stability for the 2D MHD Equations in the Non-Resistive Limit }

\begin{abstract}
    In this article, we consider the stability of the 2D magnetohydrodynamics (MHD) equations close to a combination of Couette flow and a constant magnetic field. We consider the ideal conductor limit for the case when viscosity $\nu$ is larger than resistivity $\kappa$, $\nu\ge \kappa>0$. For this regime, we establish a bound on the Sobolev stability threshold. Furthermore, for $\kappa\le \nu^3$ this system exhibits instability, which leads to norm inflation
    of size $\nu \kappa^{-\frac 1 3 }$. 
\end{abstract}
\author{Niklas Knobel}
\address{Karlsruhe Institute of Technology, Englerstraße 2,
  76131 Karlsruhe, Germany}
  \email{niklas.knobel@kit.edu}

\keywords{Magnetohydrodynamics, stability threshold}
\subjclass[2020]{76E25, 76E30, 76E05}

\maketitle
\setcounter{tocdepth}{1}
\tableofcontents

\section{Introduction }
The equations of magnetohydrodynamics (MHD) 
\begin{align}
  \label{MHD}
  \begin{split}
    \partial_t V + V\cdot \nabla V+ \nabla \Pi  &= \nu \Delta  V + B\cdot\nabla B, \\
    \partial_t B + V\cdot\nabla B &= \kappa\Delta   B +B\cdot\nabla V, \\
    \nabla\cdot V=\nabla\cdot B  &= 0,\\
    (t,x,y) &\in \R^+ \times\bbT\times \R=: \Omega ,
  \end{split}
\end{align}
model the evolution of a magnetic field $B:\Omega \to \bbR^2 $ interacting with the velocity $V:\Omega \to \bbR^2 $ of a conducting fluid. The MHD equations are a common model used in astrophysics, planetary magnetism and controlled nuclear fusion \cite{davidson_2016}. The quantities $\nu, \kappa \ge 0$ correspond to fluid viscosity and magnetic resistivity. The pressure $\Pi: \Omega \to \R $ ensures that the velocity remains divergence-free.  A fundamental problem of fluid dynamics and plasma physics is the stability and long-time behavior of solutions to equation \eqref{MHD} and in particular stability of specific solutions. We consider the combination of an affine shear flow, called Couette flow, and a constant magnetic field:
\begin{align*}
    V_s&=ye_1,\\
    B_s&=\alpha e_1 .
\end{align*}
In particular, the solution combines the effects of mixing due to shear and coupling by the magnetic field. The Couette flow mixes any perturbation, which leads to increased dissipation rates, called enhanced dissipation, and stabilizes the equation. The coupling with a constant magnetic field propagates this mixing to magnetic perturbations. However, the magnetic field weakens the mixing, especially if viscosity is larger than resistivity, inviscid damping gets counteracted by algebraic growth for specific time regimes.

In the related case of the Navier-Stokes equation, that is when no magnetic field is present, one observes turbulent solutions as viscosity reaches small values. In contrast, the linearized problem around Couette flow is stable for all values of the viscosity. These phenomena are known as the Sommerfeld paradox \cite{li2011resolution} and highlight instability due to nonlinear effects. In \cite{bedrossian2015inviscid,dengmasmoudi2018,dengZ2019,bedrossian2013asymptotic,ionescu2020nonlinear} various authors show sharp stability in Gevrey 2 spaces (spaces between $C^\infty$ and analytic). The nonlinear instability can be suppressed by the viscosity for initial data sufficiently small in Sobolev spaces, ensuring stability \cite{bedrossian2016sobolev,masmoudi2022stability,bedrossian2017stability}. 

When considering the MHD equations without Couette flow, the constant magnetic field stabilizes the equation. The dynamics of small initial perturbations of the ideal MHD equation around a strong enough magnetic field is close to the linearized system  \cite{bardos1988longtime}. For stability in several dissipation regimes we refer to \cite{wei2017global, ren2014global, he2018global, ren2014global, schmidt1988magnetohydrodynamic, cobb2023elsasser,kozono1989weak} and references therein. However, global in time wellposedness for the non-resistive case is still open (see the discussion in \cite{cobb2023elsasser}). Furthermore, a shear flow leads to qualitatively different behavior and instabilities \cite{hughes2001instability,hussain2018instability}.
 
Recently, the MHD equation around Couette flow has gathered significant interest \cite{liss2020sobolev, knobel2023echoes, zhao2023asymptotic, Dolce, knobel2023sobolev}. Already on a linear level, the behavior of the MHD changes for different values of  $\nu$ and $\kappa$. In \cite{liss2020sobolev} Liss proved the first stability threshold for the MHD equations. He considered the full dissipative regime of $\kappa =\nu>0$ and proved the stability of the three-dimensional MHD equation for initial data which is sufficiently small in Sobolev spaces. For the analogous two-dimensional problem,  Dolce \cite{Dolce} proved stability in the more general setting of $0<\kappa^3\lesssim \nu\le \kappa $. In \cite{knobel2023sobolev} Zillinger and the author considered the case of only horizontal resistivity and full viscosity and established stability for small data in Sobolev spaces. For the regime of vanishing viscosity $\nu=0$ and non-vanishing resistivity $\kappa>0$, in \cite{knobel2023echoes} we constructed a linear stability and instability mechanism around nearby traveling waves in Gevrey $2$ spaces. In a corresponding nonlinear stability result,  Zhao and Zi \cite{zhao2023asymptotic} proved the almost matching nonlinear result of Gevrey $\sigma$ stability for $1\le \sigma <2$ and for sufficiently small perturbations. 
 
The results mentioned above on stability around Couette flow focus on the setting when resistivity is larger than viscosity $\nu \le \kappa$.  Indeed in the setting $\nu>0$ and $\kappa=0$, the magnetic effects dominate leading to a linear instability mechanism and thus a growth of the magnetic field by $\nu t$ for specific initial data \cite{knobel2023sobolev}. 

In this paper, we consider the setting $0<\kappa \le \nu$. In particular, this also includes the non-resistive limit $\kappa \downarrow 0 $ independent of $\nu$. To the author's knowledge the stability of the regime $\kappa < \nu$ has not previously been studied for the MHD equation around Couette flow. To state the main result, we define the perturbative unknowns 
\begin{align*}
    v(x,y,t)&= V(x+yt,y,t )- V_s, \\
    b(x,y,t)&= B(x+yt,y,t )- B_s,
\end{align*}
where the change of variables $x\mapsto x+yt$ follows the characteristics of the Couette flow. For these unknowns, equation \eqref{MHD} becomes 
\begin{align}
\begin{split}
    \partial_t v + v_2 e_1 - 2\partial_x \Delta^{-1}_t  \nabla_t v_2  &=  \nu  \Delta_t v+ \alpha \partial_x b  + b\nabla_t b- v\nabla_t v-\nabla_t \pi , \\
    \partial_t b - b_2 e_1 \qquad \qquad \quad \quad  \ &= \kappa \Delta_t   b+ \alpha \partial_x v  +b\nabla_t v -v\nabla_t b,\\
    \nabla_t\cdot v=\nabla_t\cdot b &= 0.\label{eq:vb}
\end{split}
\end{align}
Due to the change of variables the spatial derivatives become time-dependent, i.e. $\p_y^t= \p_y-t\p_x$, $\nabla_t = (\p_x,\p_y^t)^T$ and $\Delta_t = \p_x^2 +(\p_y^t)^2 $.

For equation \eqref{eq:vb} we establish Lipschitz stability for initial data which is sufficiently small in Sobolev spaces, in the sense that there exists a bound on the initial data $\eps_0=\eps_0(\nu,\kappa)$ and a Lipschitz constant $L=L(\nu,\kappa) $ such that for initial data which satisfies
\begin{align*}
        \Vert (v,b)_{in}\Vert_{H^N} = \eps \le  \eps_0,
\end{align*}
the corresponding solution is globally bounded in time by 
\begin{align*}
        \Vert (v,b)(t)\Vert_{H^N}\le L \eps. 
\end{align*}
For the non-resistive case, $\kappa=0$, global wellposedness is an open problem and so Lipschitz stability in Sobolev spaces is unclear. Thus, naturally the question arises, which $\eps_0$ and $L$ are optimal and how they behave in the limit $\nu ,\kappa \downarrow 0$.\\
We denote a Sobolev stability threshold as $\gamma_1, \gamma_2 \in \bbR$, such that for $\eps_0= c_0 \nu^{\gamma_1}\kappa^{\gamma_2}$ with small $c_0>0$  we obtain 
\begin{align*}
    \Vert (v,b)_{in}\Vert_{H^N}  &\le  c_0  \nu^{\gamma_1}\kappa^{\gamma_2}\to \text{ stability,} \\
    \Vert (v,b)_{in}\Vert_{H^N}  &\gg c_0  \nu^{\gamma_1}\kappa^{\gamma_2}\to \text{possible instability}. 
\end{align*}
This extends the common convention in the field (eg. see \cite{bedrossian2016sobolev}) to allow for two independent parameters $\nu$ and $\kappa$. In particular, it agrees with the common convention when restricting to the case $\nu \approx \kappa$. It allows us to discuss cases where $\kappa$ tends to zero much quicker than $\nu$. Establishing a possible instability is highly nontrivial since for the nonlinear setting it is difficult to construct solutions that exhibit norm inflation. To the author's knowledge, there does not exist any nonlinear instability result for the MHD equation around Couette flow in Sobolev spaces. 

For accessibility and simplicity of notation, we state our main result as the following theorem (see Theorem \ref{thm:comp} for a detailed description). 
\begin{thm}\label{thm:simp}
    Consider $\alpha >\tfrac 1 2 $, $N\ge 5$ and a small enough constant $c_0=c_0(\alpha )>0$. Let  $0<\kappa \le \nu \le\tfrac 1{ 40}  (1-\tfrac 1{2\alpha })^{\frac 65} $, then we obtain Sobolev stability for initial data which is sufficiently small in Sobolev spaces, where the estimates qualitatively differ for the regimes $\kappa\gtrsim\nu^3$ and $\kappa \lesssim \nu^3$. More precisely: 
    \begin{itemize}
        \item In the regime of $\nu^3 \lesssim \kappa$, for all initial data which satisfy 
    \begin{align*}
        \Vert(v, b)_{in} \Vert_{H^N }= \eps \le c_0  \nu^{\frac 1 {12} } \kappa^{\frac 1 2},
    \end{align*}
    the global in time solution $(v,b)$ of \eqref{eq:vb} satisfies the Lipschitz bound 
    \begin{align*}
        \sup_{t>0}\Vert (v, b)(t)\Vert_{H^N }\lesssim  \eps . 
    \end{align*}
    \item  In the regime of $\nu^3 \gtrsim \kappa$, for all initial data which satisfy 
    \begin{align*}
        \Vert(v, b)_{in} \Vert_{H^N }= \eps \le c_0  \nu^{-\frac {11} {12} } \kappa^{\frac 56},
    \end{align*}
    the global in time solution $(v,b)$ of \eqref{eq:vb} satisfies the Lipschitz bound 
    \begin{align*}
        \sup_{t>0}\Vert (v, b)(t)\Vert_{H^N }\lesssim \nu \kappa^{-\frac 1 3 }   \eps . 
    \end{align*}        
\end{itemize}
In particular, we obtain Lipschitz stability for the Lipschitz constant $L\approx \max( 1, \nu \kappa^{-\frac 1 3 } ) $ for the smallness parameter $\eps_0 \approx\min ( \nu^{\frac 1 {12} } \kappa^{\frac 1 2},\nu^{-\frac {11} {12} } \kappa^{\frac 56})$. 
\end{thm}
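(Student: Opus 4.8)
The plan is to analyze the linearized system in Fourier space to identify the governing semigroup and then close a nonlinear bootstrap with a carefully chosen energy functional that captures enhanced dissipation and the algebraic growth.

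\medskip

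\noindent\textbf{Step 1: Symmetrize and diagonalize the linear problem.} I would first pass to Fourier variables in $x$ (frequency $k$) and treat the system mode by mode. Dropping the nonlinear and pressure terms, \eqref{eq:vb} reduces to a $2\times 2$ (after using divergence-free to reduce $v,b$ to their second components, say $v_2,b_2$, together with the stream/current potentials) linear ODE system with time-dependent coefficients, forced by the Couette-induced terms $-2\partial_x\Delta_t^{-1}\nabla_t v_2$ and the Alfvén coupling $\alpha\partial_x$. The key linear quantities are the time-dependent Laplacian symbol $k^2+(\xi-kt)^2$ and the Alfvén frequency $\alpha k$. I expect the natural ``good unknowns'' to be Elsässer-type combinations $z_\pm = v \pm b$ (as in the cited works), which decouple the $\alpha\partial_x$ coupling into a transport-type oscillation, leaving dissipation terms $\tfrac{\nu+\kappa}{2}\Delta_t$ on the diagonal and $\tfrac{\nu-\kappa}{2}\Delta_t$ off-diagonal. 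Since $\nu\ge\kappa$, the off-diagonal term is the source of the instability: it is the mechanism by which the weakly-dissipated magnetic mode feeds back.

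\medskip

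\noindent\textbf{Step 2: Linear estimates --- enhanced dissipation versus algebraic growth.} On the timescale $t\lesssim \nu^{-1/3}$ the dissipation $e^{-\nu\int (k^2+(\xi-ks)^2)ds}$ has not yet activated, while the Alfvén oscillation at frequency $\alpha k$ (with $\alpha>\tfrac12$) provides the inviscid-damping-type decay; the interplay produces, for the magnetic component, a transient growth that I expect to be of size $\nu\kappa^{-1/3}$ precisely when $\kappa\lesssim\nu^3$ (the threshold comes from comparing the resistive dissipation time $\kappa^{-1/3}$ to the viscous one $\nu^{-1/3}$). I would make this precise by constructing a Fourier multiplier / weight $M(t,k,\xi)$ (a ``ghost energy'' in the style of bedrossian2016sobolev, masmoudi2022stability) that is monotone decreasing, absorbs the worst linear growth, and whose total variation is bounded by $\log(\nu\kappa^{-1/3})$ or a power thereof. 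The weighted energy $\|M(v,b)\|_{H^N}^2$ should then satisfy a differential inequality with a good (negative-definite) contribution from dissipation plus the CK (Cauchy--Kovalevskaya) terms from $\dot M/M$.

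\medskip

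\noindent\textbf{Step 3: Nonlinear bootstrap.} Assume $\sup_{t\le T}\|M(v,b)(t)\|_{H^N}\le 2L\eps$ and show it improves to $\le \tfrac32 L\eps$. The nonlinear terms $b\nabla_t b - v\nabla_t v$ etc. are handled by $H^N$ product estimates ($N\ge5$ gives an algebra and enough room for the $\Delta_t^{-1}$ and for the $\nabla_t$ which costs a factor $\langle t\rangle$, compensated by the dissipation/multiplier). The transport nonlinearity $v\cdot\nabla_t$ is treated via a commutator/paraproduct decomposition using the enhanced-dissipation time-integrability. Each nonlinear contribution to $\frac{d}{dt}\|M(v,b)\|_{H^N}^2$ should be bounded by $C L\eps \cdot \|M(v,b)\|_{H^N}^2 \cdot(\text{integrable-in-}t\text{ factor})$, and after integration by the gain from the dissipative and CK terms, it closes provided $L\eps \lesssim \min(\nu^{1/12}\kappa^{1/2},\nu^{-11/12}\kappa^{5/6})/L$, i.e. exactly the stated smallness $\eps_0$ once one inserts $L\approx\max(1,\nu\kappa^{-1/3})$. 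The two regimes $\kappa\gtrsim\nu^3$ and $\kappa\lesssim\nu^3$ correspond to whether $L=1$ or $L=\nu\kappa^{-1/3}$, and one simply tracks the powers of $\nu,\kappa$ through the product estimates.

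\medskip

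\noindent\textbf{Main obstacle.} The crux is Step 2: designing the Fourier multiplier so that its logarithmic/polynomial cost is sharp and it genuinely dominates the off-diagonal $\tfrac{\nu-\kappa}{2}\Delta_t$ coupling uniformly in $k$, $\xi$, and across the transition at $t\sim\nu^{-1/3}$. A secondary difficulty is that, unlike the balanced case $\nu=\kappa$, the Elsässer variables are not perfectly diagonalized by dissipation, so one must carry the coupling term through the energy estimate and absorb it --- this forces the growth factor $\nu\kappa^{-1/3}$ into $L$ and is what degrades the threshold relative to the resistivity-dominant regime. Making the bootstrap constants consistent (the multiplier cost, the $\langle t\rangle$ losses from $\nabla_t$, and the $L$-dependence of $\eps_0$) so that everything closes with the claimed exponents $\tfrac1{12},\tfrac12$ and $-\tfrac{11}{12},\tfrac56$ is the bookkeeping heart of the proof.
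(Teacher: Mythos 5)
Your overall framework (Fourier-mode analysis, a monotone ghost multiplier absorbing the linear growth, and a bootstrap on a weighted $H^N$ energy) is the same as the paper's, but two of your key choices would not produce the stated result. First, the growth window is misidentified. You locate the transient growth between the timescales $\nu^{-1/3}$ and $\kappa^{-1/3}$; comparing those two times yields the threshold $\kappa\le\nu$ and a growth factor $(\nu/\kappa)^{1/3}$, neither of which matches the claimed $\kappa\lesssim\nu^3$ and $L=\nu\kappa^{-1/3}$. The paper shows (see the ``strong viscosity'' discussion and the proof of Proposition \ref{pro:lin}) that the combination of the Alfv\'en ``circular movement'' and viscosity keeps $|p|$ under control up to the much later time $|t-\tfrac\xi k|\sim\nu^{-1}$, and only on the window $\nu^{-1}\le t-\tfrac\xi k\le (c_1\kappa k^2)^{-1/3}$ does $p_2$ grow linearly, giving the factor $\kappa^{-1/3}/\nu^{-1}=\nu\kappa^{-1/3}$, which exceeds $1$ exactly when $\kappa\lesssim\nu^3$. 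The weight $M_L$ must be supported on this window; a multiplier built around a transition at $t\sim\nu^{-1/3}$ would neither absorb the actual linear growth nor yield the right Lipschitz constant, so the linear and bootstrap estimates would not close with the claimed exponents.

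Second, the Els\"asser diagonalization $z_\pm=v\pm b$ is counterproductive in the regime $\kappa\ll\nu$. It removes the $\alpha\partial_x$ coupling, but the off-diagonal dissipation $\tfrac{\nu-\kappa}{2}\Delta_t$ is of the same order as the diagonal one, and after Cauchy--Schwarz the total energy retains only the weak $\kappa$-dissipation for both components. The proof, however, crucially exploits the asymmetry: $v$ (i.e. $p_1$) keeps the full $\nu$-dissipation and the enhanced decay rate $\nu^{-1/6}$, while $b$ only has $\kappa^{-1/6}$; the threshold-determining term $v\nabla_t b$ acting on $b$ is closed precisely by putting both stabilizing effects (strong viscosity and the extra $\Lambda_t^{-1}$) on the velocity factor. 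The paper therefore keeps the unknowns $p_1=\Lambda_t^{-1}\nabla_t^\perp v$, $p_2=\Lambda_t^{-1}\nabla_t^\perp b$ with diagonal dissipation and controls the Alfv\'en coupling instead by adding the sign-indefinite cross term $\tfrac2\alpha\langle\partial_y^t\Delta_t^{-1}\chi Ap_{1,\neq},Ap_{2,\neq}\rangle$ to the energy, cut off by $\chi$ to the resonant region $|t-\tfrac\xi k|\le\nu^{-1}$, which is positive definite only because $\alpha>\tfrac12$. Without these two ingredients the bookkeeping you describe cannot arrive at the exponents $\tfrac1{12},\tfrac12$ and $-\tfrac{11}{12},\tfrac56$.
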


In the proof, we employ an energy method similar to \cite{bedrossian21, masmoudi2023asymptotic, zillinger2020boussinesq, Dolce, knobel2023sobolev}. In the following, we outline the main challenges and novelties of the proof:
\begin{itemize}
\item The imbalance of resistivity $\kappa$ and viscosity $\nu$ yields  two cases $\nu^3 \lesssim  \kappa$ and $\nu^3 \gtrsim \kappa$ (or equivalently $1\lesssim \nu \kappa^{-\frac 13 }$ or $1\gtrsim \nu \kappa^{-\frac 13 }$). These cases give different values for $L$, namely $1$ and $\nu \kappa^{-\frac 1 3 } $. 

\item We consider the case  $\nu^3 \gtrsim \kappa$. On certain time scales the viscosity is so strong that fluid effects get suppressed while the effects of the magnetic field dominate. Thus, the term $\p_t b= e_1 b_2$ in \eqref{MHD} generates algebraic growth in specific regimes (see Subsection \ref{sec:svis}). Estimating this linear effect yields the norm inflation by $L=\nu \kappa^{-\frac 1 3}$. The algebraic growth appears on different time scales depending on the frequency, a precise estimate of the nonlinear terms is necessary.

\item For the case  $\nu^3\lesssim  \kappa $ the algebraic growth is bounded by a finite constant. In the subcase  $\nu = \kappa $ the sum of the threshold parameters is $\gamma_1 + \gamma_2 = \tfrac 7{12}$ which is a slight improvement over  $\tfrac 2 3$ in \cite{Dolce}. 

\item In the proof of Theorem \ref{thm:simp} we perform a low and high frequency decomposition $a= a_{hi} +a_{low}$. For high frequencies, the nonlinear term consist of $a_{low} \nabla_t a_{hi}$, called transport term and $a_{hi } \nabla_t a$, called reaction term  (including $hi-hi$ interactions). Compared to the Navier-Stokes equation, in the case of the MHD equation, it is vital to bound the transport term precisely. In particular, for $\kappa \lesssim \nu^3 $ the previously mentioned algebraic growth affects the estimate of the transport term strongly. 

\item The threshold is determined by the nonlinear term $v\nabla_t  b= \Lambda_t^{-1}\nabla^\perp p_1  \nabla b$ acting on $b$ in \eqref{eq:vb}, for the natural unknown $p_1= \Lambda^{-1}_t \nabla^\perp v$ (which we discuss later in more detail). In our estimates we rely on two stabilizing effects, the strong viscosity of $v$ and the $\Lambda^{-1}_t$ in front of $p_1$. For the nonlinear term $v\nabla_t  b$ both effects fall onto $v$. Due to the weaker integrability of the $b$ this term determines the threshold after integrating in time.

\end{itemize}
With the main challenges in mind, let us comment on the results:
\begin{itemize}
    \item The size of the constant magnetic field  $\alpha>\tfrac 1 2 $ results in a strong interaction between $v$ and $b$. Due to this interaction, the decay in $v$ and growth in $b$ are in balance (see Lemma \ref{pro:lin}). Constants may depend on $\alpha$ and degenerate as $\alpha \downarrow \tfrac 1 2 $. For example we obtain $\lim_{\alpha \downarrow \frac 1 2 } c_0(\alpha )=0$. 

    \item  Figure \ref{fig:ViscousMHD} shows which areas stability has been proven. The graphic shows only qualitative behavior and after rescaling we obtain the same graphic. The resistivity $\kappa$ is on the vertical axis and the viscosity $\nu$ is on the horizontal axis. We prove stability for the regime $0<\kappa \le \nu $, which we divide into two segments: $\nu^3 \lesssim \kappa $ in orange and $\nu^3 \gtrsim\kappa $ in red. In \cite{Dolce} Dolce considered the regime of $0<(\tfrac {16}\alpha \kappa)^3 \le \nu\le \kappa $, which is in blue. The authors of \cite{zhao2023asymptotic} considered the line $\nu=0$ which is in purple. The black line corresponds to $\nu= \kappa>0$ of \cite{liss2020sobolev}.
\begin{figure} 
\begin{center} 
\includegraphics[scale=0.25]{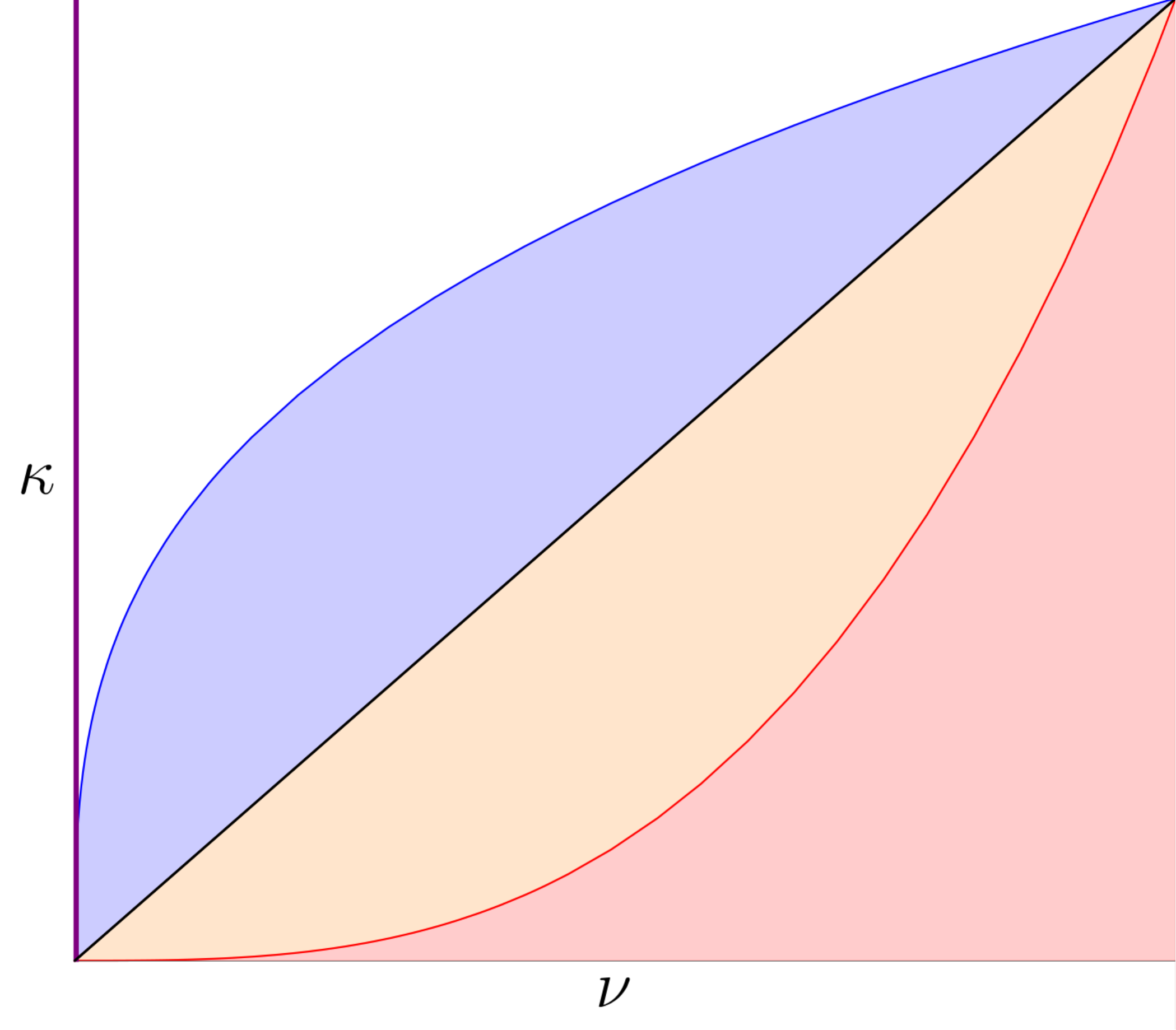}    
\caption{Sketch of areas with results for stability.  } 
\label{fig:ViscousMHD} 
\end{center} 
\end{figure}
    
    Stability for the regimes $0< \nu \le (\tfrac {16} \alpha  \kappa)^3 $, $\kappa =0<\nu $ and $\kappa=\nu=0$  remain open. For the set $0< \nu \ll \kappa^3 $, we expect that an adjusted application of the methods used in this article yield stability. We expect stability for the case $\kappa=0$ and $0<\nu$ to be very difficult since we obtain linear growth for the $p_2$ variable. For  $\Lambda_t^{-1} p_2$ we obtain linear stability but then there is no time decay in the magnetic field and so we lack an important stabilizing effect. In the inviscid case, $\kappa =\nu =0$ the linearized system is stable in the $p$ variables. However, due to the lack of dissipation, it is very challenging to bound the nonlinear terms. 
    
    \item Our threshold consists of parameters $\gamma_1$ and $\gamma_2$. An alternative notation is to impose the relation  $\nu \approx \kappa^\delta$ for some $0\le\delta \le 1  $. 
    With that convention we obtain stability if $\eps\le c_0 \kappa^{\gamma(\delta)}  $ for 
    \begin{align*}
        \gamma&=   \left\{
        \begin{array}{cc}
             \frac 1 2 +\frac \delta {12}& \delta \ge \tfrac 1 3   \\
             \tfrac 5 6- \tfrac {11}{12}\delta  & \text{ otherwise}.
        \end{array}
        \right. 
    \end{align*}

\end{itemize}

The remainder of this article is structured as follows: 
\begin{itemize}
    \item In Section \ref{sec:linbeh} we discuss the  linearized system. We identify two different time regions where ``circular movement'' or ``strong viscosity'' determine the linearized behavior. We estimate both effects separately and then establish the estimates for the linearized system. 
    \item In Section \ref{sec:StabThres} we prove the main theorem. We employ a bootstrap approach, where we control errors in Proposition \ref{prop:errors}. The main difficulty is to bound the linear growth and the nonlinear effect of $v\nabla_t b$ acting on $b$.
\end{itemize}

\subsection*{Notations and Conventions}
\label{sec:notation}
For $a,b \in \R$ we denote their minimum and maximum as 
\begin{align*}
    \min(a,b)&=a\wedge b,\\
    \max(a,b)&=a\vee b.
\end{align*}
We write $f\lesssim g $ if $f\le C g $ for a constant $C$ independent of $\nu$ and $\kappa$. Furthermore, we write $f\approx g $ if $f\lesssim g$ and $g\lesssim f $. We denote the Lebesgue spaces $L^p=L^p(\T\times \R)$ and the Sobolev spaces $H^N= H^N(\T\times \R )$ for some $N\in \bbN$. For time-dependent functions, we denote $L^p H^s=L^p_t H^s$ as the space with the norm 
\begin{align}
    \Vert f \Vert_{L^pH^s}&= \left\Vert \Vert f\Vert_{H^s(\T\times \R)}\right \Vert_{L^p(0,T)},\label{eq:lebT}
\end{align}
where omit writing the $T$. We write the time-dependent spatial derivatives
\begin{align*}
    \p_y^t&= \p_y-t\p_x,\\
    \nabla_t &= (\p_x,\p_y^t)^T,\\
    \Delta_t &= \p_x^2 +(\p_y^t)^2,
\end{align*}
and the half Laplacians as 
\begin{align*}
    \Lambda &= (-\Delta)^{\frac 1 2 },\\
    \Lambda_t&= (-\Delta_t)^{\frac 1 2 }.
\end{align*}
The function $f\in H^N $ is decomposed into its $x$ average and the orthogonal complement
\begin{align*}
    f_=(y)&=\int f(x,y) dx,\\
    f_{\neq} &=f-f_=.
\end{align*}

\subsection*{The adapted unknowns }
For the following, it is useful to change to the unknowns $p_{1,\neq} =\Lambda_t^{-1}  \nabla_t^\perp v_{\neq} $ and $p_{2,\neq} =\Lambda_t^{-1}  \nabla_t^\perp b_{\neq}$. However, since $\Lambda_t^{-1}  \nabla_t^\perp$ is not a bounded operator on the $x$ average,  we define 
\begin{align*}
    p_{1,\neq } &= \Lambda^{-1}_t \nabla^\perp_t v_{\neq},\\
    p_{1,=}&= v_{1,=},\\
    p_{2,\neq } &= \Lambda^{-1}_t \nabla^\perp_t b_{\neq},\\
    p_{2,=}&= b_{1,=}. 
\end{align*}
Thus \eqref{eq:vb} can be equivalently expressed as 
\begin{align}
\begin{split}
     \partial_t p_1 - \partial_x \partial_y^t \Delta^{-1}_t p_1- \alpha \partial_x p_2 &= \nu  \Delta_t p_1 +\Lambda^{-1}_t  \nabla^\perp_t (b\nabla_t b- v\nabla_t v), \\
  \partial_t p_2 +\partial_x \partial_y^t \Delta^{-1}_t p_2 - \alpha \partial_x p_1 &= \kappa \Delta_t p_2  +\Lambda^{-1}_t \nabla^\perp_t (b\nabla_t v- v\nabla_t b),\\
  p|_{t=0} &= p_{in}\label{eq:p1}. 
\end{split}
\end{align} 
These unknowns are particularly useful since 
\begin{align*}
    \Vert A p_1 \Vert_{L^2} &= \Vert A v\Vert_{L^2},\\
    \Vert A p_2\Vert_{L^2} &= \Vert A b\Vert_{L^2},
\end{align*}
for all Fourier multipliers $A$ such that one side is finite.

\section{Linear Stability}\label{sec:linbeh}
In this section, we consider the behavior of the linearized version of \eqref{eq:p1}:
\begin{align}
\begin{split}
    \p_t p_1 - \partial_x \partial_y^t \Delta^{-1}_t p_1- \alpha \partial_x p_2 &= \nu \Delta_t p_1 , \\
    \p_t p_2 +\partial_x \partial_y^t \Delta^{-1}_t p_2 - \alpha \partial_x p_1 &= \kappa  \Delta_t p_2  .\label{eq:plin}
\end{split}
\end{align}
For this equation, we establish the following proposition: 
\begin{pro}\label{pro:lin}
    Consider $\alpha >\tfrac 1 2 $ and $0<\kappa\le \nu$. Let  $p_{in}\in H^N $ with $p_{in,=}=\int p_{in} dx=0$,  then the solution $p$ of \eqref{eq:plin} satisfies the bound
\begin{align}
    \Vert p(t)\Vert_{H^N}\lesssim e^{-c\kappa^{\frac 1 3 }t} (1+\kappa^{-\frac 1 3 } \nu)\Vert p_{in} \Vert _{H^N}.\label{eq:linup} 
\end{align}
\end{pro}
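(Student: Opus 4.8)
The plan is to diagonalise in the spatial frequencies, reduce the statement to a pointwise bound on the propagator of a scalar $2\times2$ ODE, and then analyse that ODE along the characteristic $\xi(t)=\eta-kt$ by splitting time into the regimes in which the coupling, the viscosity, or the magnetic stretching dominate.

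\textbf{Reduction to a frequency-wise ODE.} Since $p_{in,=}=0$, only modes with $x$-frequency $k\in\Z\setminus\{0\}$ occur, and \eqref{eq:plin} is diagonal in the Fourier variables $(k,\eta)$. Writing $\xi=\xi(t)=\eta-kt$, $L=L(t)=k^2+\xi^2$ and $\mu=\mu(t)=\frac{k\xi}{L}=-\frac{d}{dt}\frac12\log L$ (so that $|\mu|\le\frac12$ and $e^{-\int_0^t\mu}=\sqrt{L(t)/L(0)}$), the Fourier transform $\hat p=(\hat p_1,\hat p_2)(t)$ solves
\[
\partial_t\hat p=\begin{pmatrix}\mu-\nu L & i\alpha k\\ i\alpha k & -\mu-\kappa L\end{pmatrix}\hat p .
\]
As the Sobolev weight $(1+k^2+\eta^2)^N$ is constant in time, \eqref{eq:linup} follows once one proves $|\hat p(t)|\lesssim e^{-c\kappa^{1/3}t}(1+\kappa^{-1/3}\nu)|\hat p(0)|$ with constants uniform in $(k,\eta)$. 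A rescaling $t\mapsto|k|t$ turns $(\nu,\kappa)$ into $(|k|\nu,|k|\kappa)$ and divides the stretching coefficient $\mu$ by $|k|$, so the decay rate and the transient are both monotone in $|k|$ and $|k|=1$ is the worst case.

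\textbf{Base energy and the two time regimes.} For $E=|\hat p_1|^2+|\hat p_2|^2$ one computes, the off-diagonal coupling being skew,
\[
\tfrac{d}{dt}E=2\mu\,(|\hat p_1|^2-|\hat p_2|^2)-2\nu L|\hat p_1|^2-2\kappa L|\hat p_2|^2 .
\]
Because $|2\mu|\le1<2\alpha\le2\alpha|k|$, the coupling always dominates the indefinite stretching term; a hypocoercivity correction $E\mapsto E+\beta(t)\,\mathrm{Im}(\hat p_1\overline{\hat p_2})$ with a bounded multiplier $\beta$ adapted to $\alpha>\frac12$ turns this into a coercive dissipation that, integrated along the shear, gives the enhanced decay $E(t)\lesssim e^{-2c\kappa^{1/3}t}E(0)$ for $t\gtrsim\kappa^{-1/3}$; equivalently, once $|\xi|\gtrsim(|k|/\kappa)^{1/3}$ the factor $e^{-2\kappa\int L}\sim e^{-c\kappa|\xi|^3/|k|}$ controls the evolution by itself. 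It remains to treat the passage through the unique critical time $t_\ast=\eta/k$, where $L$ is small. There I split $\{|\xi|\lesssim(|k|/\kappa)^{1/3}\}$ into the \emph{circular} region $\{\nu L\lesssim\alpha|k|\}$, i.e. $|\xi|\lesssim\sqrt{\alpha|k|/\nu}$, in which $\hat p_1$ and $\hat p_2$ rotate into one another at frequency $\alpha|k|$ and $E$ changes only by the $\kappa L$ loss because the stretching term averages out over each oscillation, and the \emph{strong-viscosity} region $\{\nu L\gtrsim\alpha|k|\}$, in which $\hat p_1$ is overdamped and quasi-statically slaved, $\hat p_1\approx\frac{i\alpha k}{\nu L}\hat p_2$, so that $\hat p_2$ obeys $\partial_t\hat p_2\approx-\big(\mu+\kappa L+\tfrac{\alpha^2k^2}{\nu L}\big)\hat p_2$.

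\textbf{The algebraic growth and the main obstacle.} In the strong-viscosity region the stretching yields $|\hat p_2(t)|\sim\sqrt{L(t)/L(0)}\,|\hat p_2(0)|$ up to the damping factor $e^{-\int(\kappa L+\alpha^2k^2/(\nu L))\,ds}$, and a direct comparison of $\mu$ with $\kappa L+\frac{\alpha^2k^2}{\nu L}$ shows the damping dominates the stretching \emph{except} on the window $\frac{\alpha^2|k|}{\nu}\lesssim|\xi|\lesssim(|k|/\kappa)^{1/3}$, across which the damping integrates to only $O(1)$. That window is non-empty precisely when $\kappa\lesssim\nu^3$ (with an $\alpha$-dependent constant), and across it $|\hat p_2|$ is amplified by $\sqrt{L_{\mathrm{top}}/L_{\mathrm{bot}}}\approx\frac{(|k|/\kappa)^{1/3}}{\alpha^2|k|/\nu}\approx\nu\kappa^{-1/3}|k|^{-2/3}$; for $|k|=1$ this is exactly the announced factor $1+\kappa^{-1/3}\nu$ (attained by data concentrated at $|k|=1$, $|\eta|\approx\alpha^2/\nu$ in the $p_2$ component), while for $\kappa\gtrsim\nu^3$ the window is empty and the transient is $O(1)$. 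One then checks the amplification cannot compound — after $t_\ast$ the decay $e^{-c\kappa|\xi|^3/|k|}$ takes over, matching $e^{-c\kappa^{1/3}t}$ for $t\gtrsim\kappa^{-1/3}$ — and patches the three regions along each characteristic to obtain the frequency-wise bound uniformly in $(k,\eta)$; summing against the Sobolev weight gives \eqref{eq:linup}. I expect the strong-viscosity analysis (Subsection~\ref{sec:svis}) to be the main obstacle: on the growth window neither the $\kappa$-dissipation nor the coupling-induced $\nu$-damping alone beats the Orr/stretching factor $\sqrt{L(t)/L(0)}$, so one must follow $\hat p_2$ together with the slaved $\hat p_1$ through it, extract the sharp power, and splice with the oscillatory circular region without losing a factor in the transition; by comparison the construction of $\beta$ and the $\kappa^{1/3}$ bookkeeping are routine.
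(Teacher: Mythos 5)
Your proposal is correct and follows essentially the same route as the paper: a frequency-wise reduction along the characteristic, a corrected (hypocoercive) energy exploiting $\alpha>\tfrac12$ near the critical time, and a direct growth estimate for $\hat p_2$ on the window between $s\approx\nu^{-1}$ and $s\approx(\kappa k^2)^{-1/3}$, yielding the amplification $\nu\kappa^{-1/3}$ and the $e^{-c\kappa^{1/3}t}$ decay from $\kappa\int L$. The only cosmetic differences are that the paper treats the whole region $|t-\tfrac\xi k|\le\nu^{-1}$ with a single Gronwall estimate on the corrected energy (the constant being finite precisely because $\nu\cdot\nu^{-1}=1$) instead of your quasi-static slaving of $\hat p_1$, and for larger times it simply drops the sign-definite $\hat p_1$ contributions rather than tracking the slaved mode; your rescaling-in-$|k|$ remark is not literally exact but is unnecessary, since carrying $k$ through gives the factor $k^{-2/3}\le 1$ directly.
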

For the proof of Proposition \ref{pro:lin}, we perform a Fourier transform $(x,y)\mapsto (k,\xi)$ and  replace $p_1$ by $ip_1$ \eqref{eq:plin} to infer for modes $k \neq 0$
\begin{align}
\begin{split}
    \p_t p_1 &= -\tfrac {t-\frac \xi k } {1+(t-\frac \xi k)^2} p_1- \alpha kp_2 - \nu k^2(1+(t-\tfrac \xi k)^2) p_1 , \\
    \p_t p_2 &=\tfrac {t-\frac \xi k}  {1+(t-\frac \xi k)^2} p_2 + \alpha k p_1 - \kappa k^2(1+(t-\tfrac \xi k)^2) p_2  .\label{eq:plft}
\end{split}
\end{align}
Where with slight abuse of notation we omit writing the Fourier transformation. This equation has several effects that appear on different regimes of $t-\tfrac \xi k$, which we discuss in the following. The effect of circular movement appears on $\vert t-\tfrac \xi k \vert \lesssim \nu^{-1} $. 
We first sketch these effects 

\subsection*{Circular movement} \label{sec:cm}
To highlight the effect of the constant magnetic field $\alpha $ in \eqref{eq:plin} we consider the toy model
\begin{align}
\begin{split}
    \p_t p_1&= -\alpha k p_2 , \\
    \p_t p_2 &=\alpha k p_1.\label{eq:circ}
\end{split}
\end{align}
This is solved by 
\begin{align*}
    p(t)&= \left( 
    \begin{array}{cc}
         \cos(\alpha t )& -\sin(\alpha k t ) \\
         \sin(\alpha t )& \cos(\alpha kt )
    \end{array}
    \right) p_{in}. 
\end{align*}
We call this effect of the constant magnetic field \eqref{eq:circ} circular movement, which leads to a transfer between $p_1$ and $p_2$. This circular movement is counteracted by viscosity for times away from $\tfrac \xi k $. 

\subsection*{Effect of strong viscosity }\label{sec:svis}
Let us consider the case when $0< \kappa \ll \nu $ and for simplicity of notation let $k=1$ and $\xi =0$. Due to the viscosity, we obtain $p_1\approx 0 $ for large times $t\ge t_0 \gg 1$.  Then from  \eqref{eq:plft} we deduce the toy model
\begin{align}
\begin{split}
    \p_t p_2 &=(\tfrac {t }  {1+t^2}  - \kappa (1+t^2))  p_2.\label{eq:sv}
\end{split}
\end{align}
The first term in \eqref{eq:sv} leads to linear growth until the resistivity is strong enough for the second term to take over. This is seen in the explicit solution of \eqref{eq:sv} 
\begin{align*}
    p_2(t) &= \tfrac {\langle t\rangle}{\langle t_0  \rangle}\exp(- \kappa  \int^t_{t_0} 1+((\tau-t) )^2 \ d\tau ) p_2(t_0).
\end{align*}
This is estimated by 
\begin{align*}
    p_2(t)&\lesssim  t_0^{-1} \kappa^{-\frac 1 3 }  e^{-c \kappa^{\frac 1 3 } (t-t_0)}p_2(t_0),
\end{align*}
which corresponds to the maximal growth which we obtain. In the following, we will see that $t_0 \approx \nu^{-1} $ is the time after which viscosity dominates. The reader may expect that the enhanced dissipation timescale $\nu^{-\frac 13 } $ would be the relevant timescale, but the combination of circular movement and the viscosity gives enough decay for $p_2$ such that the linear growth gets suppressed until the time $\nu^{-1}$.

\subsection*{Proof of Proposition \ref{pro:lin}}\label{sec:lin}
\begin{proof}
For simplicity of notation, we introduce the new variable $s=t-\frac \xi k $ and initial time $s_{in}=-\tfrac \xi k $. Then equation \eqref{eq:plft} reads
\begin{align*}
    \p_s p_1 &= -\tfrac s {1+s^2} p_1- \alpha kp_2 - \nu k^2(1+s^2) p_1 , \\
    \p_s p_2 &=\tfrac s {1+s^2} p_2 + \alpha k p_1 - \kappa k^2(1+s^2) p_2.
\end{align*}
Further we change the unknown to  $\tilde p = \exp(-\tfrac \kappa 2 k^2 (s-s_{in}+\tfrac 1 3 (s^3-s_{in}^3)))p $. For $\tilde \kappa = \tfrac \kappa 2 $ and  $\tilde \nu = \nu - \tfrac \kappa 2 $, this yield the equation
\begin{align*}
    \p_s \tilde p_1 &= -\tfrac s {1+s^2} \tilde p_1- \alpha k\tilde p_2 - \tilde \nu k^2(1+s^2) \tilde p_1 , \\
    \p_s \tilde p_2 &=\tfrac s {1+s^2} \tilde p_2 + \alpha k \tilde p_1- \tilde \kappa k^2(1+s^2) \tilde p_1.
\end{align*}
Let us denote $s_0:=\nu^{-1}$ and in the following, we distinguish between times $\vert s\vert \le s_0 $ and $\vert s \vert \ge s_0$. We first consider the case $s_{in}\le -s_0$. For $\vert s \vert \le s_0$, the circular movement is not suppressed by the viscosity. 

We define the energy  $E=\vert \tilde p\vert^2+\tfrac 1 {\alpha k }\tfrac {2s}{1+s^2}\tilde  p_1\tilde p_2 $, then $E$ is a positive quadratic form due to our assumption $\alpha>\tfrac 1 2 $ and satisfies
\begin{align*}
    (1-\tfrac 1 {2\alpha k })  \vert \tilde   p\vert^2\le E\le (1+\tfrac 1 {2\alpha k }) \vert \tilde  p\vert^2.
\end{align*}
We calculate the time derivative
\begin{align*}
    \p_s E &+  \tilde \nu k^2(1+s^2)\tilde p_1^2+  \tilde \kappa  k^2(1+s^2)\tilde p_2^2 \\
    &= \tfrac 1{\alpha k }\p_s(\tfrac {2s}{1+s^2})  \tilde p_1 \tilde p_2- 2s\tfrac {(\tilde \nu -\tilde \kappa)k} {\alpha  }  \tilde p_1 \tilde p_2 \\
    &\le \tfrac 1{\alpha k }\p_s(\tfrac {2s}{1+s^2})  \tilde p_1 \tilde p_2+\tfrac 1 2 \tilde \nu k^2 (1+s^2) \tilde  p_1^2 +\tfrac {2\tilde \nu} {\alpha^2  } \tilde p_2^2
\end{align*}
and so with $\vert \tilde p\vert^2 \le \tfrac {2\alpha }{2\alpha -1 }  E$ we infer 
\begin{align*}
    \vert \p_s  E \vert  &\le \tfrac {\alpha } {\alpha -\frac 1 2 }( \tfrac 1{1+s^2} +2\tfrac {\tilde \nu} {\alpha^2}   ) E.
\end{align*}
Gronwall's lemma implies 
\begin{align*}
    E(s_0)&\le \exp \left(\tfrac {\alpha} {\alpha -\frac 1 2 }(\pi  +2 \tfrac {\tilde \nu} {\alpha^2}\vert s_0\vert   ) r\right)E(-s_0).
\end{align*}
Since $\nu  s_0= 1$, we deduce 
\begin{align*}
    E(s_0)&\lesssim E(-s_0)
\end{align*}
and thus 
\begin{align}
    \vert  \tilde p(s_0)\vert\lesssim \vert  \tilde p(- s_0)\vert .\label{eq:ps0}
\end{align}
Consider the case  $\vert s\vert \ge s_0$, we calculate 
\begin{align*}
    \tfrac 1 2 \p_s \vert \tilde p\vert^2  &\le (-\tfrac { s  }{1+s^2}- \tilde \nu k^2(1+s^2)) \tilde  p_1^2\\
    &+(\tfrac { s  }{1+s^2}- \tilde \kappa k^2(1+s^2)) \tilde  p_2^2,
\end{align*}
and since $(-\tfrac { s  }{1+s^2}- \tilde \nu k^2(1+s^2))\le 0$ for all $\vert s \vert \ge s_0$ we conclude  
\begin{align*}
    \p_s \vert \tilde p\vert^2 &\le(\tfrac { 2  s  }{1+s^2}- \tilde \kappa k^2(1+s^2))_+  \tilde p_2^2.
\end{align*}
Thus we obtain the estimate 
\begin{align*}
    \vert  \tilde p (s) \vert^2&\le \left\{ \begin{array}{cc}
         \vert  \tilde p(s_{in})\vert^2 & s\le -s_0,  \\
         \tfrac {1+s^2}{1+s_0^2} \vert \tilde  p(s_0)\vert^2&  s_0\le s\le 2 (\kappa k^2 )^{-\frac 13 },\\
         (1+ 4\nu^2\kappa^{-\frac 2 3 }k^{-\frac 4 3 })\vert \tilde  p(s_0)\vert^2 &  s_0\vee 2 (\kappa k^2 )^{-\frac 13 }\le s .
    \end{array}\right.
\end{align*}
Combining this with \eqref{eq:ps0} we infer 
\begin{align*}
    \vert \tilde p(s) \vert \lesssim (1+ \nu \kappa^{-\frac 1 3 }k^{-\frac 2 3 })\vert  p(s_{in})\vert .
\end{align*}
The case $s_{in} \ge -s_0$ is established similarly since we only bound the growth. With  
\begin{align*}
    \exp(-\tfrac \kappa 2 k^2 (s-s_{in}+\tfrac 1 3 (s^3-s_{in}^3)))\lesssim  e^{-c\kappa^{\frac 1 3 } t }
\end{align*}
we deduce 
\begin{align*}
    \vert  p(s) \vert 
    &\lesssim e^{-c\kappa^{\frac 1 3 }t} \vert \tilde  p \vert  (s) \\
    &\lesssim (1+ \nu\kappa^{-\frac 1 3 }k^{-\frac 2 3 }) e^{-c\kappa^{\frac 1 3 }t} \vert  p\vert (s_{in}).
\end{align*}
Equation \eqref{eq:plft} decouples in $\xi$ and $k$, so we infer the proposition with this estimate. 

\end{proof}

\section{Sobolev Stability for the Nonlinear System}\label{sec:StabThres}
The following theorem is a more general statement of Theorem \ref{thm:simp}.  We dedicate the remainder of the section to the proof.
\begin{thm}\label{thm:comp}
    Let $\alpha > \tfrac 1 2 $ and $N\ge 5$, then there exist $c_0,c>0$, such that for all $0<\kappa \le \nu \le \tfrac 1{ 40}  (1-\tfrac 1{2\alpha })^{\frac 65 } $ there exist $L =\max (1, \nu \kappa^{-\frac 1 3}) $,  such that for all initial data, which satisfy 
    \begin{align}
    \begin{split}
        \Vert (v, b)_{in, \neq}\Vert_{H^N } &= \eps \le c_0 L^{-1} \nu^{\frac 1 {12}} \kappa^{\frac 1 2 },\\ \label{eq:init}
        \Vert (v, b)_{in, =} \Vert_{H^N }&\le\tilde \eps, \qquad \qquad  \text{with }  \eps\le  \tilde \eps \le \nu^{-\frac 1 {12}}\eps 
    \end{split}
    \end{align}
    the corresponding solution of \eqref{eq:vb} satisfies the bound 
    \begin{align}
    \begin{split}
         \Vert (v, b)_{ \neq}(t)\Vert_{L^\infty H^N }+  \Vert\nabla_t  (\nu v,\kappa  b)_{ \neq}\Vert_{L^2H^N }&\lesssim L e^{-c\kappa^{\frac 1 3 } t } \eps, \\
         \Vert (v, b)_{ =}(t)\Vert_{L^\infty H^N }+  \Vert\p_y  (\nu v,\kappa  b)_{ =}\Vert_{L^2 H^N }&\lesssim \tilde \eps.\label{eq:vbthm}
    \end{split}
    \end{align}
 Furthermore, we obtain the following enhanced dissipation estimates 
    \begin{align*}
        \Vert v_{\neq}\Vert_{L^2 H^N} &\lesssim L \nu^{-\frac 1 6 } e^{-c\kappa^{\frac 1 3 } t } \eps,\\
        \Vert b_{\neq}\Vert_{L^2 H^N} &\lesssim L \kappa^{-\frac 1 6 }e^{-c\kappa^{\frac 1 3 } t }  \eps.
    \end{align*}
\end{thm}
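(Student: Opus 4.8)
The plan is to run a bootstrap argument in the adapted unknowns $p=(p_1,p_2)$ of \eqref{eq:p1}, treating the nonzero modes $p_{\neq}$ and the zero modes $p_{=}$ separately. For the zero modes, incompressibility forces $v_{2,=}=b_{2,=}=0$, so $v_{1,=}$ and $b_{1,=}$ solve the forced heat equations $\partial_t v_{1,=}=\nu\partial_y^2 v_{1,=}+(b\nabla_t b-v\nabla_t v)_{1,=}$ and $\partial_t b_{1,=}=\kappa\partial_y^2 b_{1,=}+(b\nabla_t v-v\nabla_t b)_{1,=}$, and the bound $\|(v,b)_{=}\|_{L^\infty H^N}+\|\partial_y(\nu v,\kappa b)_{=}\|_{L^2H^N}\lesssim\tilde\eps$ follows by a standard energy estimate once the nonlinear forcing is controlled. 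The core is the nonzero modes. Here I would introduce a Fourier multiplier $M(t,k,\xi)$ on $k\neq 0$, built from the linear analysis of Section~\ref{sec:linbeh}, such that for solutions of \eqref{eq:plin} the quantity $\|M(t)e^{c\kappa^{1/3}t}\,\cdot\,\|_{H^N}$ is essentially non-increasing, with $M\approx 1$ at $t=0$ and $M$ as small as $\approx L^{-1}$ at late times — so that the loss factor $L$ in \eqref{eq:vbthm} is precisely the cost of passing from the energy back to $\|p_{\neq}\|_{H^N}$ — and with $-\dot M/M\ge 0$ furnishing a Cauchy--Kovalevskaya (``CK'') damping term. The multiplier must absorb both the bounded circular-movement growth (via the quadratic-form correction $\tfrac{2s}{1+s^2}\tilde p_1\tilde p_2$ appearing in the proof of Proposition~\ref{pro:lin}) and the frequency-dependent algebraic growth of strong viscosity, active for $\nu^{-1}\lesssim|t-\xi/k|\lesssim(\kappa k^2)^{-1/3}$. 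The bootstrap energy is $E(t)=\|e^{c\kappa^{1/3}t}M(t)\langle\nabla\rangle^N p_{\neq}(t)\|_{L^2}^2$ together with the zero-mode norm, and the bootstrap hypotheses are exactly the conclusions of the theorem with the implicit constants enlarged by a fixed factor, assumed to hold on a maximal interval $[0,T^{\ast})$; local well-posedness and continuity give $T^{\ast}>0$, and improving the constants forces $T^{\ast}=\infty$.

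The linear part of $\partial_t E$ produces, after the energy computations of the proof of Proposition~\ref{pro:lin} carried out with the $\langle k,\xi\rangle^N$ weight, the favorable terms $-2c\kappa^{1/3}E$, the dissipation $-\nu\|\nabla_t v_{\neq}\|_{H^N}^2-\kappa\|\nabla_t b_{\neq}\|_{H^N}^2$ in weighted form, and the CK term $-\|(-\dot M/M)^{1/2}e^{c\kappa^{1/3}t}M\langle\nabla\rangle^N p_{\neq}\|_{L^2}^2$, while the circular-movement and strong-viscosity growth from \eqref{eq:plft} is cancelled by the construction of $M$. Integrating in time and interpolating against the dissipation yields the enhanced-dissipation gains $\nu^{-1/6}$ for $v_{\neq}$ and $\kappa^{-1/6}$ for $b_{\neq}$ in $L^2_tH^N$. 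It remains to show that the nonlinear contributions to $\partial_t E$ are bounded by $\lesssim(\text{bootstrap constant})\,\eps\,(E+\text{dissipation}+\text{CK})$ up to terms absorbable after time integration, so that smallness of $\eps$ relative to the thresholds closes the bootstrap. For this I would perform the low/high decomposition $a=a_{low}+a_{hi}$ on each quadratic term $\Lambda_t^{-1}\nabla_t^\perp(b\nabla_t b-v\nabla_t v)$ and $\Lambda_t^{-1}\nabla_t^\perp(b\nabla_t v-v\nabla_t b)$, splitting into reaction terms $a_{hi}\nabla_t a$ (including $hi$--$hi$) and transport terms $a_{low}\nabla_t a_{hi}$. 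Reaction terms are handled by putting all derivatives on the high-frequency factor and paying for them with the CK/dissipation damping together with the $L^2_t$ smallness of $a_{hi}$; transport terms use the commutator structure of the weight $M\langle\nabla\rangle^N$ and the fact that the low-frequency factor costs essentially no derivatives.

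The decisive term, as flagged in the introduction, is the contribution of $v\nabla_t b=\Lambda_t^{-1}\nabla^\perp p_1\cdot\nabla b$ in the $p_2$-equation: both the smoothing $\Lambda_t^{-1}$ and the strong viscosity $\nu$ fall on $p_1=v$, while $b$ carries only the weak dissipation $\kappa$ and, in the regime $\kappa\lesssim\nu^3$, the algebraic growth encoded in $M$. I would estimate it by Cauchy--Schwarz in time, bounding the $v$ factor via the $\nu^{-1/6}$ enhanced-dissipation gain together with $\Lambda_t^{-1}$, and the $b$ factor via the energy $E$, keeping careful track of the weight $e^{c\kappa^{1/3}t}M$; the worst frequencies are those for which strong-viscosity growth is active, $\nu^{-1}\lesssim|t-\xi/k|\lesssim(\kappa k^2)^{-1/3}$, where the growth factor $\langle t-\xi/k\rangle$ must be paid for out of the $\kappa$-dissipation of $b$. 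Tallying the powers of $\nu$ and $\kappa$ — the $\nu^{-1/6}$ from enhanced dissipation of $v$, a $\kappa^{-1/2}$-type loss from integrating the $b$-dissipation against the growth, and the factor $L=\nu\kappa^{-1/3}$ from $M$ — produces exactly the constraint $\eps\lesssim c_0L^{-1}\nu^{1/12}\kappa^{1/2}$, and in the regime $\kappa\lesssim\nu^3$ the same bookkeeping turns the output into $L\eps$, matching \eqref{eq:vbthm}; the zero-mode forcing then also fits within $\tilde\eps$ by the admissible range $\eps\le\tilde\eps\le\nu^{-1/12}\eps$. I expect this frequency-by-frequency accounting of the algebraic growth inside the transport term — ensuring the two timescales $\nu^{-1}$ and $(\kappa k^2)^{-1/3}$ are treated uniformly and that no logarithmic losses creep in — to be the main obstacle; the reaction terms, the $hi$--$hi$ interactions, and the zero-mode estimates are comparatively routine once the dissipative gains are in place.
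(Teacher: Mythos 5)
Your proposal follows essentially the same route as the paper: pass to the adapted unknowns $p$, run a bootstrap with a time-dependent Fourier-multiplier energy whose decay to $\approx L^{-1}$ on the strong-viscosity window $\nu^{-1}\lesssim t-\xi/k\lesssim(\kappa k^2)^{-1/3}$ accounts for the Lipschitz loss $L$, decompose the quadratic terms into reaction and transport pieces, and identify $v\nabla_t b$ acting on $b$ as the threshold-determining term. One point needs to be made precise, because as literally written it would fail: the bootstrap energy you state, $E(t)=\Vert e^{c\kappa^{1/3}t}M(t)\langle\nabla\rangle^N p_{\neq}\Vert_{L^2}^2$, is diagonal in $(p_1,p_2)$, and no scalar multiplier can reproduce the cancellation between the decay of $p_1$ and the growth of $p_2$ driven by the exchange term $\alpha\partial_x$ in the resonant region $|t-\xi/k|\le\nu^{-1}$; absorbing the factor $\tfrac{s}{1+s^2}$ there into a diagonal weight would cost a factor on the order of $\kappa^{-1/3}$ (or worse), destroying the claimed $L=\max(1,\nu\kappa^{-1/3})$. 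You do invoke the quadratic-form correction $\tfrac{2s}{1+s^2}\tilde p_1\tilde p_2$ from the proof of Proposition~\ref{pro:lin}, but this must actually be built into the nonlinear energy as an off-diagonal cross term, as the paper does with $\tfrac 2\alpha\langle\partial_y^t\Delta_t^{-1}\chi Ap_{1,\neq},Ap_{2,\neq}\rangle$ (with $\chi$ localizing to $|t-\xi/k|\lesssim\nu^{-1}$), and only the residual growth on $[\nu^{-1},(\kappa k^2)^{-1/3}]$, where viscosity has already suppressed $p_1$, goes into the multiplier $M_L$. This cross term then generates its own family of nonlinear error terms (the paper's $ONL$, including a pure-magnetic $b\nabla_t b$ contribution) which are not covered by your reaction/transport scheme and must be estimated separately using the extra $\Lambda_t^{-1}$ smoothing they carry; your outline omits these. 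With that correction the plan matches the paper's proof.
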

This theorem implies Theorem \ref{thm:simp}. With slight abuse of notation, we write $L$ as the $\nu$ and $\kappa$ dependent part of the Lipschitz constant. We prove this theorem by using a bootstrap method. Let $A$ be the Fourier weight 
\begin{align*}
    A:&= M \vert \nabla \vert^N e^{c\kappa^{\frac 1  3} t \textbf{1}_{\neq} },
\end{align*}
where $M=M_L M_1 M_\kappa  M_{\nu} M_{\nu^3} $ are defined as
\begin{align*}    
    \tfrac {-\dot M_L } {M_L } &=    \tfrac {t-\frac \xi k  }{1+(\frac \xi k-t)^2} \textbf{1}_{\{\nu ^{-1 } \le t-\frac \xi k \le (c_1\kappa k^2) ^{-\frac 1 3 } \} }&&k\neq 0,\\
    \tfrac {-\dot M_1 } {M_1} &= C_\alpha  \tfrac { \vert k \vert+ \nu^{\frac1{12}}\vert k\vert^2  }{k^2 +(\xi-kt)^2}&&k\neq 0,\\
    \tfrac {-\dot M_\nu  } {M_\nu } &=\tfrac {\nu^{\frac 1 3 } }{1+ \nu^{\frac 2 3 } (t-\frac \xi k )^2} &&k\neq 0,\\ 
    \tfrac {-\dot M_\kappa } {M_{\kappa} } &= \tfrac {\kappa^{\frac 1 3 } }{1+ \kappa^{\frac 2 3 } (t-\frac \xi k )^2}&&k\neq 0,\\
    \tfrac {-\dot M_{\nu^3} } {M_{\nu^3}} &= \tfrac {C_\alpha  \nu}{1+ \nu^2(t-\frac \xi k )^2}&&k\neq 0,\\
    M_\cdot (t=0)&= M_\cdot (k=0)=1. 
\end{align*}
The weight $M_L$ is an adaption of the weight $m^{\frac 1 2}$ in \cite{liss2020sobolev} to our setting. The method of using time-dependent Fourier weights is common when working with solutions around Couette flow and the other weights are modifications of previously used weights (cf. \cite{bedrossian2016sobolev,masmoudi2022stability,liss2020sobolev,zhao2023asymptotic}).  For simplicity, here we only state their main properties and refer to Appendix \ref{app:weight} for a detailed description. The constants  $C_\alpha = \tfrac 2{\min(1, \alpha -\frac 1 2)  }$, $ c= \tfrac 1{200} (1-\tfrac 1{2\alpha }) ^2$ and $ c_1= \tfrac 1{20} (1-\tfrac 1{2\alpha })$ are determined through the linear estimates. For the weights we obtain
\begin{align}
\begin{split}
    L^{-1}\le \min(1, \nu^{-1 }\kappa^{\frac 1 3 }k^{\frac 2 3 } ) \lesssim M_L &\le 1,\\
    M_1\approx M_\kappa\approx M_{\nu} \approx M_{\nu^3} &\approx 1.\label{eq:Mapp}
\end{split}
\end{align}
We note that the weight $M_L $ is distinct from the others due to its lower bound $L^{-1}$, which depends on $\nu$ and $\kappa$. The weight $M_L$ is necessary to bound the linear growth in the region  $\nu ^{-1 } \lesssim t-\frac \xi k \lesssim (\kappa k^2) ^{-\frac 1 3 }$. Controlling the effects of $M_L$ is one of the main challenges in the proof of Theorem \ref{thm:comp}. We recall the unknowns $p$ and equation \eqref{eq:p1}
\begin{align}
\begin{split}
     \partial_t p_1 - \partial_x \partial_y^t \Delta^{-1}_t p_1- \alpha \partial_x p_2 &= \nu  \Delta_t p_1 +\Lambda^{-1}_t  \nabla^\perp_t (b\nabla_t b- v\nabla_t v), \\
  \partial_t p_2 +\partial_x \partial_y^t \Delta^{-1}_t p_2 - \alpha \partial_x p_1 &= \kappa \Delta_t p_2  +\Lambda^{-1}_t \nabla^\perp_t (b\nabla_t v- v\nabla_t b),\\
  p|_{t=0}&= p_{in}. \label{eq:p} 
\end{split}
\end{align}
Let $\chi\in C^\infty(\R_+\times \Z\times \R)  $ be a Fourier multiplier defined by
\begin{align}
    \chi&= \chi(k,\xi)=\left\{
    \begin{array}{cc}
         1 & \vert t-\frac \xi k \vert \le \   \nu^{-1 }  \\
         0 & \vert t-\frac \xi k \vert \ge 2 \nu^{-1 } 
    \end{array}
    \right.\label{eq:chi}\\
    \p_t \chi &\le 2 \nu\label{eq:pchi}.
\end{align}
We define the main energy 
\begin{align*}
    E:&= \Vert A p_{\neq}\Vert_{L^2}^2 + \tfrac 2\alpha \langle \p_y^t \Delta_t^{-1}  \chi  A p_{1,\neq} ,A p_{2,\neq} \rangle.
\end{align*}
As $\alpha >\tfrac 1 2 $, this energy is positive definite and satisfies
\begin{align}
    (1-\tfrac 1 {2\alpha} ) \Vert A p \Vert_{L^2}  \le E\le (1+\tfrac 1 {2\alpha }) \Vert A p \Vert_{L^2}.\label{eq:Ep}
\end{align}
 In the following, we assume initial data as in Theorem \ref{thm:comp}, i.e. \eqref{eq:init}.  We use a bootstrap approach to prove the following two estimates globally in time: \\
The \textbf{energy estimate without $x$-average }
\begin{align}
\begin{split}
    \Vert E\Vert_{L^\infty } &+\Vert  A\nabla_t \otimes (\nu p_{1,\neq} , \kappa p_{2,\neq}  )  \Vert_{L^2L^2}^2  \\
    &+ \sum_{j=1,\nu,\kappa,\nu^3} \Vert \sqrt{\tfrac {-\dot M_j } {M_j} } A  p_{\neq} \Vert_{L^2 L^2 }^2\le(C\eps)^2\label{eq:Eout}
\end{split}
\end{align}
The \textbf{energy estimate with $x$-average}
\begin{align}
\begin{split}
    \Vert  p_= \Vert_{L^\infty H^N}^2 &+\Vert  \p_y  (\nu p_{1,=}, \kappa p_{2,=} )  \Vert_{L^2H^N}^2  \le (C\tilde \eps) ^2. \label{eq:Ewith}
\end{split}
\end{align}
We then prove that the equality in the estimates is not attained at time $T$. By local wellposedness, the estimates thus remain valid at least for a short additional time. This contradicts the maximality and thus $T$ has to be infinite. We note that we suppress in our notation the $T$ in the estimates (see \eqref{eq:lebT}). With $1\le \kappa^{-\frac 1 3 } (\tfrac {-\dot M_\kappa }{M_\kappa} + \kappa k^2 (1+(t-\tfrac \xi k )^2 ) $ and $1\le \nu^{-\frac 1 3 } (\tfrac {-\dot M_{\nu} }{M_{\nu}} + \nu k^2 (1+(t-\tfrac \xi k )^2 ) $) we infer from \eqref{eq:Ep} and  \eqref{eq:Eout}  the enhanced dissipation estimates 
\begin{align}
    \Vert A p_{1,\neq }\Vert_{L^2 L^2}&\le 2(1-\tfrac 1 {2\alpha} )^{-1}  \nu^{-\frac 16 }    C\eps  \label{eq:enh1},\\
    \Vert A p_{2,\neq }\Vert_{L^2 L^2}&\le 2(1-\tfrac 1 {2\alpha} )^{-1}  \kappa^{-\frac 1 6} C\eps  \label{eq:enh2}.
\end{align}
By the construction of $M_1$ we obtain 
\begin{align*}
    \Vert \p_x \Lambda_t^{-1} A  p \Vert_{L^2} \lesssim \nu^{-\frac 1 {12} } \eps.
\end{align*}
We obtain the energy estimate by deriving the energy $E$ 
\begin{align}
\begin{split}
    \p_t E &+2 \Vert  A\nabla_t \otimes (\nu p_{1,\neq}, \kappa p_{2,\neq} )  \Vert_{L^2}^2  + 2 \Vert \sqrt{\tfrac {-\dot M } M } A  p_{\neq}\Vert_{L^2  }^2\\
    =& 2c\kappa^{\frac 1 3 } \Vert Ap_{\neq} \Vert^2_{L^2} \\
    &-2\langle A(1-\chi) p_{1,\neq} , \partial_x \partial_x^t \Delta^{-1}_t Ap_{1,\neq} \rangle \\
    &+2\langle A(1-\chi)p_{2,\neq} , \partial_x \partial_x^t \Delta^{-1}_t Ap_{2,\neq}  \rangle\\
    &+\tfrac 4 \alpha  \langle \chi \p_y^t\Delta_t^{-1} Ap_{1,\neq}  , \dot A p_{2,\neq}\rangle \\
    &+\tfrac 2 \alpha\langle \chi  \p_t(\p_y^t \Delta_t^{-1} ) Ap_{1,\neq}  , Ap_{2,\neq} \rangle \\
    &+\tfrac 2 \alpha \langle \p_t(\chi) \p_y^t \Delta_t^{-1}  Ap_{1,\neq}  , Ap_{2,\neq} \rangle \\
    &+ \tfrac {2 \vert \nu +\kappa\vert}\alpha \langle \chi \p_y^t  A p_{1,\neq} , Ap_{2,\neq}\rangle \\
    &+ 2 \langle A  v_{\neq} , A(b\nabla_t b- v\nabla_t v)\rangle  \\
    &+ 2 \langle A b_{\neq} , A(b\nabla_t v- v\nabla_t b) \rangle \\
    &+ \tfrac 2 \alpha\langle  \chi A \p_y^t \Delta^{-1}_t b_{\neq},   A   (b \nabla_t b - v\nabla_t v )\rangle  \\
    &+ \tfrac 2 \alpha\langle \chi A \p_y^t \Delta^{-1}_t   v_{\neq},   A  (b\nabla_t v- v\nabla_t b)   \rangle \\   
    =& L_1+ L_{NR}+ L_R +NL_{\neq}  +ONL. \label{eq:energy}
\end{split}
\end{align}
Where we denoted by $ONL$ all the terms which include the operator $\p_y^t \Delta_t^{-1}$ and $NL$ the one which does not. 
Furthermore, for the energy of $x$-averages, we obtain
\begin{align}
\begin{split}
    \p_t \Vert  & p_=\Vert_{H^N}+\Vert  \p_y (\nu p_{1,=},\kappa p_{2,=})\Vert_{H^N }\\
    &\le \langle \langle \p_y\rangle^N v_{1,=}, \langle \p_y\rangle^N(b\nabla_t b- v\nabla_t v)_=\rangle \\
    &\quad +  \langle \langle \p_y\rangle^Nb_{1,=}, \langle \p_y\rangle^N(b\nabla_t v- b\nabla_t v)_=\rangle \\
    &=NL_=.\label{eq:energy_av}
\end{split}
\end{align}
In the following subsections, we establish the following proposition:
\begin{prop}[Control of errors]
\label{prop:errors}
  Under the assumptions of Theorem \ref{thm:comp}, there exists a constant  $C=C(\alpha)>0$ such that if \eqref{eq:Eout} and \eqref{eq:Ewith} are satisfied for $T>0$, then the following estimate holds
 \begin{align}
 \begin{split}
   \int_0^T L_1+ L_R +L_{NR} \ dt &\le \tfrac {17+ \frac 3 2 \alpha }{10}(C \eps)^2+2\Vert \sqrt {\tfrac {-\dot M_L}{M_L }} Ap_2\Vert_{L^2L^2}^2 ,\\
   \int_0^T  NL_{\neq}+ ONL \ dt & \lesssim  L \nu^{-\frac 1 {12} } \kappa^{-\frac 1 2 }  \eps^3+ (L\kappa^{-\frac 1 3 }+\kappa^{-\frac 12}) \tilde\eps \eps^2,\\
   \int_0^T  NL_= \  dt & \lesssim  L \nu^{-\frac 1 {12} } \kappa^{-\frac 1 2 }  \tilde \eps \eps^2 .     \label{eq:coe}
 \end{split}
 \end{align}
\end{prop}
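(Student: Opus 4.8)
The plan is to split the proof into three blocks matching the three displayed inequalities and to handle them in increasing order of difficulty. For the \emph{linear block} $\int_0^T L_1 + L_R + L_{NR}\,dt$, the starting point is the explicit identities in \eqref{eq:energy}: $L_1 = 2c\kappa^{1/3}\|Ap_{\neq}\|_{L^2}^2$ is absorbed using the good terms $\sum_j \|\sqrt{-\dot M_j/M_j}\,Ap_{\neq}\|_{L^2L^2}^2$ and the dissipation, since $c$ is chosen small relative to the implicit constants in $M_\kappa$; the cutoff terms $L_{NR}$ (those carrying $1-\chi$) are supported on $|t-\xi/k|\gtrsim \nu^{-1}$, where $|\partial_x\partial_y^t\Delta_t^{-1}| \lesssim \nu$ pointwise, so they are controlled by the viscous dissipation $\|A\nabla_t(\nu p_{1},\kappa p_2)\|_{L^2L^2}^2$. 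The reaction-type linear terms $L_R$ (the ones with $\chi\,\partial_y^t\Delta_t^{-1}$, $\dot A$, $\partial_t(\partial_y^t\Delta_t^{-1})$, $\partial_t\chi$, and $|\nu+\kappa|$) mirror the structure of the linear ODE analysis in Proposition \ref{pro:lin}: here one uses that on $\{\nu^{-1}\le t-\xi/k\le (c_1\kappa k^2)^{-1/3}\}$ the weight $M_L$ is engineered precisely so that $-\dot M_L/M_L$ cancels the destabilizing $\tfrac{t-\xi/k}{1+(t-\xi/k)^2}$ factor, at the price of the term $2\|\sqrt{-\dot M_L/M_L}\,Ap_2\|_{L^2L^2}^2$ appearing on the right-hand side (this is exactly the $m^{1/2}$ mechanism of Liss, adapted). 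The numerical constant $\tfrac{17+\frac32\alpha}{10}$ comes from bookkeeping the various Cauchy–Schwarz splittings, each contributing a controlled fraction of $(C\eps)^2$, together with the $C_\alpha$-dependence of $M_1,M_{\nu^3}$.

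For the \emph{nonlinear block without $x$-average} $\int_0^T NL_{\neq} + ONL\,dt$, I would perform the low/high paraproduct decomposition $a = a_{hi} + a_{low}$ announced in the introduction and split each bilinear term into transport ($a_{low}\nabla_t a_{hi}$) and reaction ($a_{hi}\nabla_t a$, including $hi$–$hi$) pieces. For the reaction terms the commutator $[A,\cdot]$ gains either a factor from $\dot A/A = -\dot M/M + c\kappa^{1/3}\mathbf 1_{\neq}$ — producing a $\sqrt{-\dot M/M}\,Ap$ factor absorbed by the good terms — or an $x$-derivative, which pairs with the enhanced-dissipation estimates \eqref{eq:enh1}–\eqref{eq:enh2} and the bound $\|\partial_x\Lambda_t^{-1}Ap\|_{L^2}\lesssim \nu^{-1/12}\eps$. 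The decisive term, as flagged in the introduction, is $v\nabla_t b = \Lambda_t^{-1}\nabla^\perp p_1\cdot\nabla_t b$ acting on $b$: here both stabilizing gains ($\Lambda_t^{-1}$ and the strong viscosity $\nu$) fall on the $p_1$ factor, while $b$ is only controlled with resistive dissipation $\kappa^{-1/6}$. Estimating $\|A(v\nabla_t b)\|$ via Hölder, using $\|v_{\neq}\|_{L^2H^N}\lesssim L\nu^{-1/6}e^{-c\kappa^{1/3}t}\eps$ for the low factor and $\|b_{\neq}\|_{L^2H^N}\lesssim L\kappa^{-1/6}e^{-c\kappa^{1/3}t}\eps$ for the reaction factor, and integrating the time weights, yields the dominant contribution $L\nu^{-1/12}\kappa^{-1/2}\eps^3$; the $x$-average interactions contribute the $(L\kappa^{-1/3}+\kappa^{-1/2})\tilde\eps\eps^2$ term, where the $L\kappa^{-1/3}$ piece is the footprint of the algebraic growth in the $\kappa\lesssim\nu^3$ regime combined with a $\neq$–$=$ transport interaction. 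The $ONL$ terms carry an extra $\chi\,\partial_y^t\Delta_t^{-1}$, which is bounded by $1$ in operator norm, so they obey the same bounds.

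For the \emph{$x$-average block} $\int_0^T NL_=\,dt$, I would use that $(\cdot)_=$ extracts the zero $x$-mode, so every contributing bilinear term is of the form (nonzero mode)$\times$(nonzero mode) paired against a zero mode, hence at least one factor carries $\partial_x$ and enjoys enhanced dissipation; pairing this with the $\neq$-estimates and the $=$-estimate $\|p_=\|_{L^\infty H^N}\lesssim C\tilde\eps$ gives $L\nu^{-1/12}\kappa^{-1/2}\tilde\eps\eps^2$ after the time integration. The main obstacle throughout is the transport term $v_{low}\nabla_t b_{hi}$ in the $\kappa\lesssim\nu^3$ window $\nu^{-1}\le t-\xi/k\le(c_1\kappa k^2)^{-1/3}$: there the weight ratio $M_L(t,k,\xi)/M_L(t,\ell,\eta)$ between the output high frequency and the input high frequency must be controlled uniformly in the low frequency shift, which requires the precise commutator estimates for $M_L$ collected in the appendix and is the step where the factor $\nu\kappa^{-1/3}$ in $L$ is genuinely used; getting the final exponent to be exactly $\nu^{-1/12}\kappa^{-1/2}$ (and not worse) hinges on distributing the $\nu^{1/12}$ gain in $M_1$ correctly between the transport and reaction estimates.
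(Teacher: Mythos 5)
Your overall architecture matches the paper's: energy method, reaction/transport splitting of the nonlinearity in Fourier space, weight commutators, and the identification of $v\nabla_t b$ acting on $b$ as the threshold-determining term. However, two steps in your plan would fail as written. First, your treatment of $ONL$: you claim the extra $\chi\,\partial_y^t\Delta_t^{-1}$ ``is bounded by $1$ in operator norm, so they obey the same bounds.'' This cannot work for the combination $a^1a^2a^3=bbb$, which occurs only in $ONL$ (every entry of the $NL_{\neq}$ list $\{vvv,vbb,bbv,bvb\}$ contains at least one $v$). With no velocity factor there is no access to the viscous dissipation or to $\Vert \partial_x\Lambda_t^{-1}Ap_{1,\neq}\Vert_{L^2L^2}\lesssim\nu^{-1/12}\eps$; discarding the $\Lambda_t^{-1}$ and estimating $\langle Ab,A(b\nabla_t b)\rangle$ with the available bounds $\Vert Ap_{2,\neq}\Vert_{L^2L^2}\lesssim\kappa^{-1/6}\eps$ and $\Vert\nabla_tAb_{\neq}\Vert_{L^2L^2}\lesssim\kappa^{-1/2}\eps$ yields $\kappa^{-2/3}\eps^3$, which exceeds the claimed $L\nu^{-1/12}\kappa^{-1/2}\eps^3$ whenever $\kappa<\nu^{1/2}$, hence throughout the regime $\nu^3\lesssim\kappa\le\nu$ where $L=1$. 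The paper instead uses the identity $\xi l-k\eta=(\xi-kt)(l-k)+k(\xi-\eta-(k-l)t)$ to cancel a factor of $(\xi-kt)$ against $\partial_y^t\Delta_t^{-1}$ and convert the loss into $\partial_x\Lambda_t^{-1}$ gains; the introduction explicitly flags that this stabilizing effect is \emph{necessary} for the purely magnetic term.

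Second, the $M_L$ commutator in the transport term. You correctly single out $\tfrac{M_L(k,\xi)-M_L(l,\eta)}{M_L(k-l,\xi-\eta)M_L(l,\eta)}$ as the main obstacle, but you defer to ``commutator estimates for $M_L$ collected in the appendix.'' The appendix only controls the same-$k$ difference, $M_L(k,\eta)-M_L(k,\xi)\le 2|\xi-\eta|/k$ in \eqref{eq:M2diff}; no cross-frequency difference estimate exists there, and the crude bound $|M_L(k,\xi)-M_L(l,\eta)|\le 1$ combined with $1/M_L\le L$ produces $L^2$ rather than $L$, which degrades the threshold. The actual mechanism in the paper (the $T_2$ estimate) is a four-region case analysis $\Omega_1,\dots,\Omega_4$ according to whether $t-\tfrac{\eta}{l}$, $t-\tfrac{\xi-\eta}{k-l}$, $t-\tfrac{\xi}{k}$ lie above or below $\nu^{-1}$, exploiting that $M_L$ is constant outside that window, trading $1/M_L\le 1+\nu^{1/2}\langle t-\tfrac\xi k\rangle\wedge\kappa^{-1/3}$ against the dissipation, and on $\Omega_4$ using $1\le\nu|\xi-kt|/|k|$; none of these ingredients appears in your plan. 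A smaller inaccuracy: your proposed pairing $\Vert v_{\neq}\Vert_{L^2H^N}\cdot\Vert b_{\neq}\Vert_{L^2H^N}$ for the critical reaction term does not account for the derivative $\nabla_t$ that must land on some factor; the paper pairs $\Vert\partial_x\Lambda_t^{-1}Ap_{1,\neq}\Vert_{L^2L^2}\lesssim\nu^{-1/12}\eps$ against $\Vert\partial_y^tAb_{\neq}\Vert_{L^2L^2}\lesssim\kappa^{-1/2}\eps$ to reach the stated exponent. (The attribution of the $2\Vert\sqrt{-\dot M_L/M_L}\,Ap_2\Vert^2_{L^2L^2}$ term to $L_R$ rather than $L_{NR}$ is a harmless mislabeling.)
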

With this  proposition we deduce Theorem \ref{thm:comp}:
\begin{proof}[Proof of Theorem \ref{thm:comp}] 
By a standard application of the  Banach fixed-point theorem we obtain local well-posedness, see Appendix \ref{sec:ex}. Thus for all initial data, there exists a time interval $[0,T]$ such that \eqref{eq:Eout} and \eqref{eq:Ewith} hold. Let $T^\ast$ be the maximal time such that  \eqref{eq:Eout} and \eqref{eq:Ewith} hold. Let $c_0$ be a given, small constant and suppose for the sake of contradiction that $T^\ast<\infty$.
With the estimates \eqref{eq:energy}, \eqref{eq:energy_av} and \eqref{eq:coe} and since $c_0$ is small we obtain that the estimates \eqref{eq:Eout} and \eqref{eq:Ewith} do not attain equality. Thus by local existence, $T^\ast$ is not the maximal time and thus we obtain a contradiction. Therefore, for small enough $c_0$, \eqref{eq:Eout} and \eqref{eq:Ewith} hold global in time and so we infer Theorem \ref{thm:comp}.
\end{proof}
The remainder of the section is dedicated to the proof of Proposition \ref{prop:errors}. We rearrange and use partial integration to infer that
\begin{align}
\begin{split}
    &\quad \langle A  v_{\neq} , b\nabla_t A b_{\neq}- v\nabla_t A v_{\neq} \rangle + \langle A b_{\neq} , b\nabla_t A v_{\neq}- v\nabla_t A b_{\neq} \rangle\\
    &=\langle  b, \nabla_t(A  v_{\neq} A b_{\neq})\rangle - \tfrac 1 2 \langle  v, \nabla_t(A  v_{\neq} A v_{\neq})+ \nabla_t(A  b_{\neq} A b_{\neq})\rangle \\
    &=0 \label{eq:rearangeNL}.
\end{split}
\end{align}
The $NL$ term consists of trilinear products with the unknowns 
\begin{align}
 a^1a^2a^3\in \{vvv,vbb,bbv,bvb\}   \label{eq:a}.
\end{align}
Thus, we denote the nonlinear terms 
\begin{align*}
    NL_{\neq}[a^1a^2a^3]=&\langle A a^1_{\neq} , A(a^2_{\neq}\nabla_t a^3_{\neq})_{\neq} -a^2_{\neq}\nabla_t A a^3_{\neq} \rangle,\\
    & +\langle A a^1_{\neq} , A(a^2_=\nabla_t a^3_{\neq})-a^2_=\nabla_t a^3_{\neq} \rangle,\\
    &+ \langle A a^1_{\neq} , A(a^2_{\neq}\nabla_t a^3_=) \rangle,\\
     NL_{=}[a^1a^2a^3]=&\langle \langle \p_y\rangle^N  a^1_{=} , \langle \p_y\rangle^N(a^2_{\neq}\nabla_t a^3_{\neq})_=\rangle.
\end{align*}
If we do not use specific choices for $a^1a^2a^3$ we write just $NL$. Similarly, we use  $a^1a^2a^3\in \{bvv,bbb,vbv,vvb\}$ for $ONL$. Furthermore, we always use $i$ such that $p_i = \Lambda_t^{-1}\nabla^\perp_t  a^2$ in the sense that $i=1$ if $a^2=v$ and $i=2$ if $a^2=b$. 
We perform the energy estimates in the next subsections: 
\begin{itemize}
    \item In Subsection \ref{sec:linest} we estimate the linear error terms. In this subsection, the split with $\chi$ into resonant and non-resonant regions depending on $\nu$ is vital. 
    \item In Subsection \ref{sec:no} we conclude the energy estimate for the nonlinear term without $x$ average. Here it is necessary to perform a low and high frequency decomposition. This gives us a reaction and a transport term. In particular, for $\kappa\downarrow 0$ bounding the transport term is very challenging due to the linear growth.     
    \item In Subsections \ref{sec:second}, \ref{sec:third} and \ref{sec:first} we estimate nonlinear terms with an $x$-average component.    
    \item In Subsection \ref{sec:ONL} we estimate nonlinear term which arise due $\chi$ in the resonant regions.  For these terms, we obtain an additional $\Lambda_t^{-1}$, which has a stabilizing effect. This stabilizing effect is necessary due to a nonlinear term consisting of only magnetic components. 
\end{itemize}

\subsection{Linear estimates }\label{sec:linest}
In this section, we establish estimates of the linear errors $L_1$, $L_R$ and $L_{NR}$ of \eqref{eq:energy}. In order to estimate $L_1$, we use \eqref{eq:enh1} and \eqref{eq:enh2} to deduce
\begin{align*}
    \int L_1 d\tau &=2c\kappa^{\frac 1 3} \Vert Ap_{\neq }\Vert^2_{L^2L^2}\le 8(1-\tfrac 1 {2\alpha} )^{-1} c(C \eps)^2. 
\end{align*}
 For the $L_{NR }$ terms in \eqref{eq:energy},  we infer
\begin{align*}
     \langle A (1-\chi)  p_{1,\neq} , A\p_x \p_x^t \Delta^{-1}_t p_{1,\neq}\rangle  &=\sum_{k\neq 0} \int d\xi (1-\chi) \tfrac {t-\frac \xi k }{1+(t-\frac \xi k )^2}A^2 p_{1}^2 \\
    &\le \nu^3 \Vert A\nabla_t p_{1,\neq } \Vert^2_{L^2},
\end{align*}
since  $(1-\chi)\tfrac {t-\frac \xi k }{1+(t-\frac \xi k )^2}\le (1-\chi)(\nu )^3(1+(t-\tfrac \xi k )^2) $ due to $\chi=1$ for $\vert t-\frac \xi k\vert \le \nu^{-1}$. Furthermore, using \eqref{eq:M2est} we estimate
\begin{align*}
    \langle A(1-\chi) p_{2,\neq} , A\partial_x \partial_x^t \Delta^{-1}_t p_{2}\rangle &=\sum_{k\neq 0} \int d\xi(1-\chi)  \tfrac {t-\frac \xi k }{1+(t-\frac \xi k )^2}A^2p_2^2 \\
    &\le  \sum_{k\neq 0} \int d\xi (1-\chi)(\tfrac {-\dot M_L}{M_L }+\kappa  c_1 (1+(t-\tfrac \xi k )^2))A^2p_2^2\\
    &\le\Vert  \sqrt {\tfrac {-\dot M_L}{M_L }} Ap_{2,\neq}\Vert_{L^2}^2 + \kappa c_1 \Vert  \nabla_t  Ap_{2,\neq}\Vert_{L^2}^2.
\end{align*}
 Thus with \eqref{eq:Eout} we deduce 
\begin{align*}
    \int  L_{NR} \  d\tau &\le (2 \nu^2  + 2 c_1  )(C \eps)^2 + 2\Vert \sqrt {\tfrac {-\dot M_L}{M_L }} Ap_{2,\neq}\Vert_{L^2L^2}^2.
\end{align*}
For $L_{R }$, we estimate in frequency space
\begin{align*}
   \vert  (\textbf{1}_{\neq}\p_y^t\Delta_t^{-1})^\wedge  \vert &= \vert (\tfrac {\xi-kt}{k^2+(\xi-kt)^2})_{k\neq 0}\vert \le \tfrac 1 2, \\
    \tfrac {-\dot M_L } {M_L } \vert (\p_y^t \Delta_t)^\wedge \vert &\le 
    \left\vert \left(\tfrac {(t-\frac \xi k)^2 }{k(1+(t-\frac \xi k)^2)^2}\right)_{k\neq 0}\right \vert \le C_\alpha^{-1}    \tfrac {-\dot M_1 } {M_1}.
\end{align*}
So it follows that
\begin{align*}
    &\tfrac 4 \alpha  \langle \chi \p_y^t\Delta_t^{-1} Ap_{1,\neq} , \dot A p_{2,\neq}\rangle \\
     &= \tfrac 4 \alpha  \langle \chi A p_{1,\neq},(c\kappa^{\frac 1 3 } + \tfrac {\dot M_1}{M_1 } +\tfrac {\dot M_L } {M_L } + \tfrac {\dot M_\kappa}{M_\kappa }+ \tfrac {\dot M_{\nu}}{M_{\nu} }+\tfrac {\dot M_{\nu^3}}{M_{\nu^3} })\partial_y^t \Delta^{-1}_t A  p_{2,\neq }\rangle\\
     &\le\tfrac {2c}\alpha \kappa^{\frac 1 3 }  \Vert A p_{\neq} \Vert^2_{L^2}+ (1+C_\alpha ^{-1} )\tfrac 1{\alpha} \Vert\sqrt{\tfrac {-\dot M_1}{M_1 }} A p_{\neq}\Vert_{L^2}^2 \\
     &\quad + \tfrac 1 {\alpha} \Vert  \sqrt{\tfrac {-\dot M_\kappa}{M_\kappa }}  A p_{\neq}\Vert_{L^2}^2+ \tfrac 1 {\alpha} \Vert  \sqrt{\tfrac {-\dot M_{\nu}}{M_{\nu} }} A p_{\neq}\Vert_{L^2}^2+ \tfrac 1 {\alpha} \Vert  \sqrt{\tfrac {-\dot M_{\nu^3}}{M_{\nu^3}}} A p_{\neq}\Vert_{L^2}^2.
\end{align*}
We use  the estimate in frequency space 
\begin{align*}
    \vert (((\p_x^2-(\p_y^t)^2) \Delta^{-2}_t )_{\neq})^\wedge\vert &=\left \vert  \left(\tfrac {1-(t-\frac \xi k )^2 }{k^2 (1+(t-\frac \xi k )^2 )^2 }\right)_{k\neq 0 } \right \vert \le C_\alpha^{-1}  \tfrac {-\dot M_1 }{M_1 },
\end{align*}
to infer that
\begin{align*}
    \tfrac 1 \alpha \langle \chi A p_{1,\neq} ,  A \p_x^{-1} (\p_x^2-(\p_y^t)^2) \Delta^{-2}_t p_{2,\neq}\rangle&\le C_\alpha^{-1}\tfrac 1 {\alpha} \Vert  \sqrt{\tfrac {-\dot M_1 }{M_1 }} A p_{1,\neq} \Vert_{L^2}\Vert  \sqrt{\tfrac {-\dot M_1 }{M_1 }} A p_{2,\neq} \Vert_{L^2}\\
    &\le C_\alpha^{-1}\tfrac 1 {2\alpha} \Vert  \sqrt{\tfrac {-\dot M_1 }{M_1 }} A p_{\neq} \Vert_{L^2}^2.
\end{align*}
With  \eqref{eq:pchi} we deduce 
\begin{align*}
    \langle \p_y^t \Delta_t^{-1} \p_t(\chi) A p_{1,\neq} , A p_{2,\neq}\rangle &\le \nu  \Vert A  p_{1,\neq} \Vert_{L^2} \Vert A\Lambda_t^{-1}  p_{2,\neq } \Vert_{L^2}.
\end{align*}
By the Fourier support of $\chi$  (see \eqref{eq:chi}) and the definition of $M_{\nu^3}$ we obtain $\chi \le   2 C_\alpha^{-1}  \nu^{-1} \tfrac {-\dot M_{\nu^3}}{M_{\nu^3}} \chi $, which yields 
\begin{align*}
    \tfrac {\vert \nu +\kappa\vert}\alpha \langle \chi A\p_y^t p_{1,\neq} , A p_{2,\neq}\rangle &\le 2  C_\alpha^{-1} \tfrac {\nu^{\frac 1 2 } } {\alpha }  \Vert A \p_y^t  p_{1,\neq} \Vert_{L^2} \Vert \sqrt{\tfrac {-M_{\nu^3}}{M_{\nu^3}}} A  p_{2,\neq }\Vert_{L^2} .
\end{align*}
Thus for the linear error $L_R$ we infer
\begin{align*}
    \int L_R d\tau  &\le ( \tfrac {2c}\alpha + \nu^{\frac 56 } ) (C \eps)^2  \\
     &\quad + \tfrac {1+ C_\alpha^{-1} }{\alpha} \Vert\sqrt{\tfrac {-\dot M_1}{M_1 }} p_{\neq}\Vert_{L^2L^2}^2 + \tfrac 1 {\alpha} \Vert \sqrt{\tfrac {-\dot M_\kappa}{M_\kappa }} p_{\neq}\Vert_{L^2L^2}^2+ \tfrac 1 {\alpha} \Vert  \sqrt{\tfrac {-\dot M_{\nu}}{M_{\nu} }} p_{\neq}\Vert_{L^2L^2}^2\\
      &\quad +\tfrac {1+ C_\alpha^{-1} }{2\alpha }  \Vert \sqrt{\tfrac {-\dot M_{\nu^3}}{M_{\nu^3}}} A  p_{2,\neq} \Vert_{L^2L^2} +C_\alpha^{-1} \tfrac {1 } {2\alpha }  \nu \Vert A \p_y^t  p_{1,\neq} \Vert_{L^2L^2}^2.
\end{align*}
Combining the estimates for all linear terms, we obtain
\begin{align*}
    \int L &+L_R +L_{NR} \ d\tau\\ &\le ((8+\tfrac 2\alpha )(1-\tfrac 1 {2\alpha} )^{-1} c+ 2 \nu^2  +2c_1+ \nu^{\frac 5 6 } )(C \eps)^2 \\
    &\quad + 2\Vert \sqrt {\tfrac {-\dot M_L}{M_L }} Ap_2\Vert_{L^2L^2}^2 \\
    &\quad +(1+\tfrac 3 2 C_\alpha^{-1})\tfrac 1{\alpha} \Vert\chi\sqrt{\tfrac {-\dot M_1}{M_1 }} p_{\neq}\Vert_{L^2L^2}^2 + \tfrac 1 {\alpha} \Vert \sqrt{\tfrac {-\dot M_\kappa}{M_\kappa }} p_{\neq}\Vert_{L^2L^2}^2+ \tfrac 1 {\alpha} \Vert  \sqrt{\tfrac {-\dot M_{\nu}}{M_{\nu} }} p_{\neq}\Vert_{L^2L^2}^2\\
      &\quad +\tfrac {1+ C_\alpha^{-1} }{2\alpha }  \Vert \sqrt{\tfrac {-M_{\nu^3}}{M_{\nu^3}}} A  p_{2,\neq} \Vert_{L^2L^2} +C_\alpha^{-1} \tfrac {1 } {2 \alpha }  \nu \Vert A \p_y^t  p_{1,\neq} \Vert_{L^2L^2}^2\\
      &\le (12 (1-\tfrac 1 {2\alpha} )^{-1} c+ 2 \nu^2  +2c_1 + \nu^{\frac 5 6 } + \tfrac {1+2C_\alpha^{-1}}{2\alpha} )(C \eps)^2\\
      &\quad +2\Vert \sqrt {\tfrac {-\dot M_L}{M_L }} Ap_{2,\neq }\Vert_{L^2L^2}^2 .
\end{align*}
Since  $\alpha > \tfrac 1 2 $ we deduce   $\tfrac {1+2C_\alpha^{-1}}\alpha <1+\tfrac 1 {2\alpha}$. Choosing the constants such that
\begin{align*}
    c&= \tfrac 1{200} (1-\tfrac 1{2\alpha })^2,\\
    c_1&= \tfrac 1{20} (1-\tfrac 1{2\alpha }),
\end{align*}
and recalling that
\begin{align*}
    \nu \le\tfrac 1{ 40}  (1-\tfrac 1{2\alpha })^{\frac 65 },
\end{align*}
we conclude that $(12 (1-\tfrac 1 {2\alpha} )^{-1} c+ 2 \nu^2 +2c_1 + \nu^{\frac 5 6 } + \tfrac {1+2C_\alpha^{-1}}{2\alpha} <\tfrac {17+\frac 3{2\alpha } }{20} $. Thus we obtain the estimate
\begin{align*}
    \int L_1 +L_R +L_{NR} d\tau       &\le \tfrac {17+ \frac 3 {2 \alpha} }{20}(C \eps)^2+2\Vert \sqrt {\tfrac {-\dot M_L}{M_L }} Ap_{2,\neq }\Vert_{L^2L^2}^2.
\end{align*}
This yields the first estimate of Proposition \ref{prop:errors}.

\subsection{Nonlinear terms without an $x$-average}\label{sec:no}

We apply the notation of \eqref{eq:a} and aim to estimate terms of the form
\begin{align*}
     &\quad \langle A a^1_{\neq} , A( a^2_{\neq}\nabla_t a^3_{\neq}) - a^2_{\neq} \nabla_t A a^3_{\neq}  \rangle\\
     &= \sum_{k,l,k-l \neq 0 }\iint d(\xi,\eta) \tfrac {A(k,\xi)- A(l,\eta )}{A(k-l, \xi-\eta) A(l, \eta ) }\tfrac {\xi l - k\eta} {((k-l)^2+(\xi-\eta-(k-l)t )^2 )^{\frac 12 }}\\
     &\qquad \qquad  (Aa^1)(k,\xi )(Ap_i)(k-l, \xi-\eta )(A a^3) (l,\eta)\\
     &=T+R.
\end{align*}
Here, we split the integral into the \emph{reaction} $R$ and the \emph{transport} $T$ terms which correspond to the sets 
\begin{align*}
    \Omega_R &= \{ \vert k-l,\xi-\eta \vert \ge \tfrac 1 8\vert l,\eta \vert\},\\
    \Omega_T &= \{ \vert k-l,\xi-\eta \vert <\tfrac 1 8\vert l,\eta \vert   \},
\end{align*}
in Fourier space. We split the weights 
\begin{align*}
    A(k,\xi ) - A(l,\eta )& = e^{ct\kappa^{\frac 1 3 }}(M_L (k,\xi) - M_L(l,\eta)) M_1(k,\xi)M_\kappa(k,\xi)M_{\nu}(k,\xi)M_{\nu^3}(k,\xi) \vert k,\xi \vert^N \\
    &\quad + e^{ct\kappa^{\frac 1 3 }}(\vert k,\xi \vert^N  - \vert l,\eta \vert^N ) M_L(l,\eta)M_1(k,\xi)M_\kappa(k,\xi )M_{\nu}(k,\xi)M_{\nu^3}(k,\xi )\\
    &\quad + e^{ct\kappa^{\frac 1 3 }}(M_1 (k,\xi) - M_1(l,\eta))  M_L(l,\eta)M_\kappa (k,\xi) M_{\nu}(k,\xi)M_{\nu^3}(k,\xi)\vert l,\eta \vert^N \\
    &\quad + e^{ct\kappa^{\frac 1 3 }}(M_\kappa (k,\xi) - M_\kappa(l,\eta)) M_1(l,\eta)M_L(l,\eta)M_{\nu}(k,\xi)M_{\nu^3}(k,\xi)\vert l,\eta \vert^N\\
    &\quad + e^{ct\kappa^{\frac 1 3 }}(M_{\nu} (k,\xi) - M_{\nu}(l,\eta)) M_1(l,\eta)M_L(l,\eta)M_\kappa(l,\eta) M_{\nu^3}(k,\xi) \vert l,\eta \vert^N\\
    &\quad + e^{ct\kappa^{\frac 1 3 }}(M_{\nu^3} (k,\xi) - M_{\nu^3}(l,\eta)) M_1(l,\eta)M_L(l,\eta)M_\kappa(l,\eta) M_{\nu}(l,\eta ) \vert l,\eta \vert^N
\end{align*}
and thus by \eqref{eq:Mapp} we estimate
\begin{align}
\begin{split}
    \tfrac {A(k,\xi)- A(l,\eta )}{A(k-l, \xi-\eta) A(l, \eta )}&\lesssim e^{-ct\kappa^{\frac 1 3 }} \tfrac {\vert M_L (k,\xi) - M_L(l,\eta)\vert }{M_L(k-l, \xi-\eta) M_L (l, \eta )}\tfrac {\vert\xi,\eta  \vert^N}{\vert l,\eta \vert^N\vert k-l,\xi -\eta  \vert^N}\\
    &\quad + e^{-ct\kappa^{\frac 1 3 }} \tfrac {\left\vert\vert k,\xi\vert^N - \vert l,\eta\vert^N\right\vert }{\vert l,\eta \vert^N\vert k-l,\xi -\eta  \vert^N}\tfrac 1 {M_L(k-l, \xi-\eta)}\\
    &\quad +e^{-ct\kappa^{\frac 1 3 }}\sum_{j=1,\kappa,\nu ,\nu^3}\vert M_j(k,\xi )- M_j(l,\eta )\vert \tfrac 1{\vert k-l,\xi-\eta  \vert^N}  \tfrac 1 {M_L(k-l, \xi-\eta)}.\label{eq:Asplit}
\end{split}
\end{align}
\textbf{Reaction term:} On the set $\Omega_R$ it holds that $\vert k-l,\xi-\eta \vert \ge\tfrac 1 8\vert l,\eta \vert $, thus $\vert k , \xi \vert , \vert l, \eta \vert \lesssim \vert  k-l , \xi-\eta \vert $. From \eqref{eq:Mapp}, \eqref{eq:Asplit} and $\tfrac {\vert k,\xi\vert^N }{\vert l,\eta \vert^N\vert k-l,\xi -\eta  \vert^N},\tfrac {\vert  \vert k,\xi\vert^N - \vert l,\eta\vert^N\vert }{\vert l,\eta \vert^N\vert k-l,\xi -\eta  \vert^N}\lesssim\tfrac {1}{\vert l,\eta \vert^N}$ we infer 
\begin{align*}
    \tfrac {A(k,\xi)- A(l,\eta )}{A(k-l, \xi-\eta) A(l, \eta )}&\lesssim \tfrac {1}{M_L(k-l, \xi-\eta) M_L (l, \eta )}\tfrac 1{\vert l,\eta \vert^N}.
\end{align*}
With $\xi l - k\eta= (\xi-\eta -(k-l )t) l - (k-l)(\eta-lt )$ and Hölder's inequality we deduce 
\begin{align*}
     R&= e^{-ct\kappa^{\frac 1 3 }}\sum_{k,l,k-l \neq 0 }\int d(\xi,\eta )\textbf{1}_{\Omega_R } \tfrac {A(k,\xi)- A(l,\eta )}{A(k-l, \xi-\eta) A(l, \eta ) } \tfrac {\xi l - k\eta} {((k-l)^2+(\xi-\eta-(k-l)\tau )^2 )^{\frac 12 }} \\ 
     &\qquad \qquad \qquad (Aa^1)(k,\xi )(Ap_i)(k-l, \xi-\eta )(A a^3) (l,\eta) \\
     &\lesssim   e^{-ct\kappa^{\frac 1 3 }}\sum_{k,l,k-l \neq 0 }\int d(\xi,\eta )\textbf{1}_{\Omega_R } \tfrac 1 {\vert l,\eta\vert^N }\tfrac 1{M_L(k-l, \xi-\eta) M_L (l, \eta ) } \tfrac {\vert( \xi-\eta -(k-l)t) l - (k-l)(\eta-lt )\vert } {((k-l)^2+(\xi-\eta-(k-l)\tau )^2 )^{\frac 12 }} \\ 
     &\qquad \qquad \qquad (Aa^1)(k,\xi ) (Ap_i)(k-l, \xi-\eta )(A a^3) (l,\eta) \\
     &\lesssim \Vert Aa^1_{\neq}\Vert_{L^2} \Vert \tfrac 1 {M_L} Ap_{i,\neq} \Vert_{L^2} \Vert \tfrac 1 {M_L}Aa^3_{\neq}\Vert_{L^2}\\
     &\quad +\Vert Aa^1_{\neq}\Vert_{L^2} \Vert \p_x\tfrac 1 {M_L}\Lambda^{-1}_t  Ap_{i,\neq} \Vert_{L^2} \Vert \tfrac 1  {M_L}A  \p_y^t a^3_{\neq}\Vert_{L^2}.
\end{align*}
We use \eqref{eq:M2inv1} and \eqref{eq:M2inv2} to infer 
\begin{align*}
R    &\lesssim \Vert Aa^1_{\neq}\Vert_{L^2} (\Vert Ap_{i,\neq} \Vert_{L^2}+ \nu \Vert (\Lambda_t \wedge \kappa^{-\frac 1 3}) Ap_{i,\neq} \Vert_{L^2}) (\Vert Aa^3_{\neq}\Vert_{L^2}+ \nu\Vert A (\Lambda_t \wedge \kappa^{-\frac 1 3}) a^3_{\neq}\Vert_{L^2})\\
     &\quad + L\Vert Aa^1_{\neq}\Vert_{L^2} (\Vert A \p_x \Lambda^{-1}_t p_{i,\neq} \Vert_{L^2}+ \nu\Vert  Ap_{i,\neq} \Vert_{L^2})  \Vert \p_y^t  A a^3_{\neq}\Vert_{L^2}.
\end{align*}
Integrating in time yields 
\begin{align*}
    \int R d\tau &\lesssim L \nu^{-\frac 1 {12}} \kappa^{-\frac 1 2 }\eps^3 .
\end{align*}
\textbf{Transport term:} On the set $\Omega_T$ it holds that $\vert k-l,\xi-\eta \vert <\tfrac 1 8\vert l,\eta \vert $ and thus it follows that $\vert k,\xi\vert\approx \vert l,\eta \vert $. By the mean value theorem, there exists  $\theta \in [0,1] $ such that
\begin{align*}
    \left\vert \vert k,\xi\vert^N - \vert l,\eta\vert^N \right \vert &\le N \vert k-l,\xi-\eta \vert \vert k- \theta l, \xi-\theta \eta \vert^{N-1} \\
    &\lesssim \vert k-l,\xi-\eta \vert \vert l,\eta \vert^{N-1}.
\end{align*}
Thus with  \eqref{eq:Asplit} and Lemma \ref{lem:Mdiff} we conclude, that 
\begin{align}
    \tfrac {A(k,\xi)- A(l,\eta )}{A(k-l, \xi-\eta) A(l, \eta )}&\lesssim\tfrac 1 {M_L(l,\eta )}  (\tfrac 1 {\vert l\vert }+\nu^{\frac 1 {12} })\tfrac 1 {\vert k-l, \xi-\eta \vert^{N-1}} \label{eq:nuT1}\\
    &\quad +\sum_{j=\kappa,\nu ,\nu^3} \vert M_j(k,\xi)- M_j(l,\eta)\vert \tfrac 1 {M_L(l,\eta )} \tfrac 1 {\vert k-l, \xi-\eta\vert^N}\label{eq:nuT1j} \\
    &\quad + \tfrac {M_L(k,\xi)- M_L(l,\eta )}{M_L(k-l, \xi-\eta)M_L(l, \eta ) } \tfrac 1 {\vert k-l, \xi-\eta \vert^N }\label{eq:nuT2}.
\end{align}
Based on this estimate, in the following we distinguish between different regimes in frequency,
\begin{align*}
     T
     &= \sum_{k,l,k-l \neq 0 }\int d(\xi,\eta )\textbf{1}_{\Omega_T} \tfrac {A(k,\xi)- A(l,\eta )}{A(k-l, \xi-\eta) A(l, \eta ) }\tfrac {\xi l - k\eta} {((k-l)^2+(\xi-\eta-(k-l)\tau )^2 )^{\frac 12 }}\\
     &\qquad \qquad (Aa^1)(k,\xi )(Ap_i)(k-l, \xi-\eta )(A a^3) (l,\eta) \\
     &\lesssim \sum_{k,l,k-l \neq 0 }\int d(\xi,\eta )\textbf{1}_{\Omega_T} \tfrac 1 {M_L(l,\eta )}  (\tfrac 1 {\vert l\vert }+\nu^{\frac 1 {12}} )\tfrac 1 {\vert k-l, \xi-\eta \vert^{N-1}}\tfrac {\xi l - k\eta} {((k-l)^2+(\xi-\eta-(k-l)\tau )^2 )^{\frac 12 }}\\
     &\qquad \qquad (Aa^1)(k,\xi ) (Ap_i)(k-l, \xi-\eta )(A a^3) (l,\eta)\\
     &\quad + \sum_{k,l,k-l \neq 0 }\int d(\xi,\eta )\textbf{1}_{\Omega_T} \textbf{1}_{\vert \frac \eta l -t \vert \ge\vert \frac {\xi-\eta}{k-l}-t \vert}\tfrac{\sum_{j=\kappa,\nu ,\nu^3}\vert M_j(k,\xi)- M_j(l,\eta)\vert} {M_L(l,\eta )} \tfrac 1 {\vert k-l, \xi-\eta \vert^{N}}\tfrac {\xi l - k\eta} {((k-l)^2+(\xi-\eta-(k-l)\tau )^2 )^{\frac 12 }}\\
     &\qquad \qquad  (Aa^1)(k,\xi ) (Ap_i)(k-l, \xi-\eta )(A a^3) (l,\eta)\\
     &\quad +\sum_{k,l,k-l \neq 0 }\int d(\xi,\eta )\textbf{1}_{\Omega_T} \textbf{1}_{\vert \frac \eta l -t \vert \le\vert \frac {\xi-\eta}{k-l}-t \vert}\tfrac{\sum_{j=\kappa,\nu ,\nu^3} \vert M_j(k,\xi)- M_j(l,\eta)\vert} {M_L(l,\eta )} \tfrac 1 {\vert k-l, \xi-\eta \vert^{N}}\tfrac {\xi l - k\eta} {((k-l)^2+(\xi-\eta-(k-l)\tau )^2 )^{\frac 12 }}\\
     &\qquad \qquad (Aa^1)(k,\xi ) (Ap_i)(k-l, \xi-\eta )(A a^3) (l,\eta)\\
     &\quad + \sum_{k,l,k-l \neq 0 }\int d(\xi,\eta )\textbf{1}_{\Omega_T} \tfrac {M_L(k,\xi)- M_L(l,\eta )}{M_L(k-l, \xi-\eta) M_L(l, \eta ) }\tfrac 1 {\vert k-l, \xi-\eta\vert^N}\tfrac {\xi l - k\eta} {((k-l)^2+(\xi-\eta-(k-l)\tau )^2 )^{\frac 12 }} \\
     &\qquad \qquad  (Aa^1)(k,\xi ) (Ap_i)(k-l, \xi-\eta )(A a^3) (l,\eta)\\
     &=T_{1,1}+ T_{1,2}+ T_{1,3} +T_2.
\end{align*}
Here, the  $T_{1,1}$ term  is due to estimate \eqref{eq:nuT1}. For \eqref{eq:nuT1j} we distinguish between the frequencies $\vert \tfrac \eta l -t \vert \ge\vert \tfrac {\xi-\eta}{k-l}-t \vert$ in $T_{1,2}$ and $\vert \tfrac \eta l -t \vert \le\vert \tfrac {\xi-\eta}{k-l}-t \vert$ in $T_{1,3}$. The $M_L$ commutator \eqref{eq:nuT2} is $T_2$, which requires further splitting. For  $T_{1,1}$ we use $\xi l - k\eta= (\xi-\eta-(k-l)t) l - (k-l)(\eta-lt )$, \eqref{eq:Mapp} and \eqref{eq:M2inv2} to estimate 
\begin{align*}
      T_{1,1}&= \sum_{k,l,k-l \neq 0 }\iint d(\xi,\eta)  \textbf{1}_{\Omega_T} \tfrac 1 {M_L(l,\eta )}  (\tfrac 1 {\vert l\vert }+\nu^{\frac 1 {12}})    \tfrac 1 {\vert k-l, \xi-\eta \vert ^{N-1 }}\tfrac {\xi l - k\eta} {((k-l)^2+(\xi-\eta-(k-l)\tau )^2 )^{\frac 12 }}\\
     &\qquad \qquad  (Aa^1)(k,\xi )  (Ap_i)(k-l, \xi-\eta )(A a^3) (l,\eta) \\
      &\lesssim   \Vert Aa^1_{\neq}\Vert_{L^2 } \Vert A p_{i,\neq} \Vert_{L^2} \Vert \tfrac 1 {M_L}Aa^3_{\neq} \Vert_{L^2 }+ \Vert Aa^1_{\neq}\Vert_{L^2 } \Vert A \Lambda_t^{-1} p_{i,\neq} \Vert_{L^2} \Vert  \tfrac 1 {M_L} A\p_y^ta^3_{\neq} \Vert_{L^2 }\\
      &\quad + \nu^{\frac 1 {12} }\Vert Aa^1_{\neq}\Vert_{L^2 } \Vert A p_{i,\neq} \Vert_{L^2} \Vert \tfrac 1 {M_L}A\p_x a^3_{\neq} \Vert_{L^2 } \\
      &\le  L \Vert Aa^1_{\neq}\Vert_{L^2 } \Vert A p_{i,\neq} \Vert_{L^2}  \Vert Aa^3 _{\neq}\Vert_{L^2 }+ L \Vert Aa^1_{\neq}\Vert_{L^2 } \Vert A \Lambda_t^{-1} p_{i,\neq} \Vert_{L^2} \Vert A\p_y^ta^3_{\neq} \Vert_{L^2 }\\
      &\quad + \nu^{\frac 1 {12} }\Vert Aa^1_{\neq}\Vert_{L^2 } \Vert A p_{i,\neq} \Vert_{L^2} \Vert \Lambda_t   a^3_{\neq} \Vert_{L^2}.
\end{align*}
After integrating in time we deduce that
\begin{align*}
    \int  T_{1,1 } d\tau \lesssim (L+ \nu^{-\frac 1 {12}} )  \kappa^{-\frac 12 } \eps^3.
\end{align*}
For $T_{1,2}$ we use  $\vert \tfrac \eta l -t \vert \ge\vert \tfrac {\xi-\eta}{k-l}-t \vert $ to infer that $\vert \xi l - k\eta\vert  =\vert (\xi-\eta-(k-l)t) l - (k-l) (\eta-lt) \vert \le 2 \vert (k-l) (\eta-lt )\vert$. Furthermore, with $\sum_{i=\nu ,\kappa, \nu^3 } \vert M_i(k,\xi)- M_i(l,\eta)\vert\approx 1 $ and \eqref{eq:M2inv} we conclude that
\begin{align*}
    T_{1,2}&\lesssim\sum_{k,l,k-l \neq 0 }\int d(\xi,\eta )\textbf{1}_{\Omega_T} \textbf{1}_{\vert \frac \eta l -t \vert \ge\vert \frac {\xi-\eta}{k-l}-t \vert}\tfrac{1} {M_L(l,\eta )} \tfrac 1 {\vert k-l, \xi-\eta \vert^{N}}\tfrac {\vert (k-l) (\eta-lt )\vert} {((k-l)^2+(\xi-\eta-(k-l)\tau )^2 )^{\frac 12 }} \\
    &\qquad \qquad (Aa^1)(k,\xi )(Ap_i)(k-l, \xi-\eta )(A a^3) (l,\eta)\\
    &\lesssim  \Vert Aa^1_{\neq}\Vert_{L^2}\Vert A \Lambda_t^{-1}p_{i,\neq} \Vert_{L^2}\Vert\tfrac 1 {M_L}  A\p_y^t a^3_{\neq} \Vert_{L^2 } \\
    &\lesssim  L \Vert Aa^1_{\neq}\Vert_{L^2}\Vert A \Lambda_t^{-1}p_{i,\neq} \Vert_{L^2}\Vert A\p_y^t a^3_{\neq} \Vert_{L^2 } .
\end{align*}
So after integrating in time, we obtain 
\begin{align*}
    \int  T_{1,2} d\tau &\lesssim L \kappa^{-\frac 1 2 } \eps^3.
\end{align*}
For $T_{1,3 }$, we use $\vert \tfrac \eta l -t \vert \le\vert \tfrac {\xi-\eta}{k-l}-t \vert  $ to infer  $\xi l - k\eta \le 2(\xi-\eta-(k-l)t) l $. Furthermore, with \eqref{eq:Mjdiff} we deduce 
\begin{align*}
        \sum_{j=\kappa,\nu ,\nu^3}\vert M_j(k,\xi)- M_j(l,\eta)\vert &\lesssim \nu^{\frac 1 3 } \tfrac { \vert \xi l-k\eta\vert }{\vert kl \vert}.
\end{align*}
Combining these two estimates by Hölder's inequality and \eqref{eq:M2inv} it follows, that 
\begin{align*}
    T_{1,3 }&\lesssim \sum_{k,l,k-l \neq 0 }\int d(\xi,\eta )\textbf{1}_{\Omega_T} \textbf{1}_{\vert \frac \eta l -t \vert \le\vert \frac {\xi-\eta}{k-l}-t \vert} \tfrac{1} { M_L(l,\eta )} \tfrac 1 {\vert k-l, \xi-\eta \vert^{N}}\tfrac {\nu^{\frac 1 3}(\xi l - k\eta)^2} { k l((k-l)^2+(\xi-\eta-(k-l)\tau )^2 )^{\frac 12 }}\\
    &\qquad \qquad (Aa^1)(k,\xi ) (Ap_i)(k-l, \xi-\eta )(A a^3) (l,\eta)\\
    &\lesssim \sum_{k,l,k-l \neq 0 }\int d(\xi,\eta )\textbf{1}_{\Omega_T}\textbf{1}_{\vert \frac \eta l -t \vert \le\vert \frac {\xi-\eta}{k-l}-t \vert} \tfrac{1} { M_L(l,\eta )} \tfrac 1 {\vert k-l, \xi-\eta \vert^{N}}\tfrac {\nu^{\frac 1 3}(\xi-\eta-(k-l)t)^2 l^2} { k l((k-l)^2+(\xi-\eta-(k-l)\tau )^2 )^{\frac 12 }}\\
    &\qquad \qquad (Aa^1)(k,\xi ) (Ap_i)(k-l, \xi-\eta )(A a^3)(l,\eta)\\
    &\lesssim \nu^{\frac 1 3} \sum_{k,l,k-l \neq 0 }\int d(\xi,\eta )\textbf{1}_{\Omega_T}\textbf{1}_{\vert \frac \eta l -t \vert \le\vert \frac {\xi-\eta}{k-l}-t \vert} \tfrac{1} { M_L(l,\eta )} \tfrac{\vert \xi-\eta-(k-l)t\vert } {\vert k-l, \xi-\eta \vert^{N-1}}\\
    &\qquad \qquad (Aa^1)(k,\xi ) (Ap_i)(k-l, \xi-\eta )(A a^3)(l,\eta)\\
    &\lesssim \nu^{\frac 13 }\Vert Aa^1_{\neq}\Vert_{L^2} \Vert A \Lambda_t p_{i,\neq}\Vert_{L^2} \Vert\tfrac 1 {M_L}  Aa^3_{\neq}\Vert_{L^2}\\
    &\lesssim L  \nu^{\frac 13 }\Vert Aa^1_{\neq}\Vert_{L^2} \Vert A \Lambda_t p_{i,\neq}\Vert_{L^2} \Vert  Aa^3_{\neq}\Vert_{L^2}.
\end{align*}
Thus integrating in time yields 
\begin{align*}
    \int  T_{1,3 } d\tau & \le L \nu^{\frac 1 6} \kappa^{-\frac 1 2}\eps^3.
\end{align*}
To estimate the $T_2$ term, we split the integral into the sets
\begin{align*}
    \Omega_1 &=\{\min(t-\tfrac \eta l, t-\tfrac {\xi-\eta }{k-l})\ge \nu^{-1 }\},\\
    \Omega_2 &=\{t-\tfrac \eta l \ge \nu^{-1}\ge t-\tfrac {\xi-\eta }{k-l}\},\\
    \Omega_3 &=\{t-\tfrac {\xi-\eta }{k-l} \ge \nu^{-1 }\ge t-\tfrac \eta l\},\\
    \Omega_4 &=\{t-\tfrac \xi k \ge \nu^{-1} \ge \max(t-\tfrac \eta l, t-\tfrac {\xi-\eta }{k-l})\}.
\end{align*}
For frequencies such that $\nu^{-1 } \ge \max( t-\tfrac \eta l,t-\tfrac \xi k )$, then $M_L(k,\xi)-M_L(l,\eta )=0$ and hence the commutator vanishes. Thus the sets $\Omega_j$ covers all regions of the support. The sets $\Omega_1, \Omega_2$ and $\Omega_3$ are chosen to distinguish between $\tfrac 1{M_L}=1 $ and $\tfrac 1 {M_L}  > 1$ for different frequencies and on set $\Omega_4 $ we use strong dissipation in the first component. We split the set $T_2$ into
\begin{align*}
    T_2&= \sum_{k,l,k-l \neq 0 }\int d(\xi,\eta ) \textbf{1}_{\Omega_T} \tfrac {M_L(k,\xi)- M_L(l,\eta )}{M_L(k-l, \xi-\eta) M_L(l, \eta ) }\tfrac {\xi l - k\eta} {((k-l)^2+(\xi-\eta-(k-l)\tau )^2 )^{\frac 12 }} \tfrac 1 {\vert k-l, \xi-\eta\vert^N}\\
    &\qquad \qquad (Aa^1)(k,\xi ) (Ap_i)(k-l, \xi-\eta ) (A a^3) (l,\eta)(\textbf{1}_{\Omega_1}+\textbf{1}_{\Omega_2}+\textbf{1}_{\Omega_3}+\textbf{1}_{\Omega_4})\\
    &= T_{2,1 }+T_{2,2}+T_{2,3}+T_{2,4 }.
\end{align*}
For  $T_{2,1} $ we use \eqref{eq:M2inv} to deduce 
\begin{align*}
T_{2,1}&\le \nu^2\sum_{k,l,k-l \neq 0 }\iint d(\xi,\eta) 1_{\Omega_1}  \langle t-\tfrac \eta l \wedge \kappa^{-\frac 1 3}\rangle  \langle t-\tfrac {\xi-\eta }{k-l }\wedge \kappa^{-\frac 1 3} \rangle \tfrac {1}{\vert k-l,\xi -\eta   \vert^{N-1}}\\
     &\qquad \qquad \tfrac {(\xi-\eta-(k-l)t) l - (k-l)(\eta- lt)} {((k-l)^2+(\xi-\eta-(k-l)\tau )^2 )^{\frac 12 }}(Aa^1)(k,\xi ) (Ap_i)(k-l, \xi-\eta )(A a^3) (l,\eta) \\
     &\lesssim \nu^2 \Vert Aa^1_{\neq} \Vert_{L^2} \Vert (\Lambda_t\wedge \kappa^{-\frac 1 3}) A p_{i,\neq} \Vert_{L^2} \Vert A\Lambda_t a^3_{\neq} \Vert_{L^2 }\\
     &\quad +L \nu\Vert Aa^1_{\neq} \Vert_{L^2} \Vert (\Lambda_t\wedge \kappa^{-\frac 1 3})\Lambda^{-1}_t \Lambda^{-1} A p_{i,\neq}\Vert_{L^2} \Vert A \p_y^t a^3_{\neq} \Vert_{L^2 }
\end{align*}
and so 
\begin{align*}
   \int T_{2,1}d\tau &\lesssim L\nu^{\frac 5 6}  \kappa^{-\frac 1 2 }\eps^3 .
\end{align*}
Now we consider $T_{2,2} $. By \eqref{eq:M2inv} we infer
\begin{align*}
T_{2,2}&\le \sum_{k,l,k-l \neq 0 }\iint d(\xi,\eta) \textbf{1}_{\Omega_2 } \nu \langle t-\tfrac \eta l \wedge \kappa^{-\frac 1 3}\rangle  \tfrac {1}{ \vert k-l,\xi -\eta   \vert^{N-1} }\tfrac {(\xi-\eta-(k-l)t) l - (k-l)(\eta- lt)} {((k-l)^2+(\xi-\eta-(k-l)\tau )^2 )^{\frac 12 }}\\
     &\qquad \qquad (Aa^1)(k,\xi )(Ap_i)(k-l, \xi-\eta )(A a^3) (l,\eta) \\
     &\lesssim \nu\Vert Aa^1_{\neq} \Vert_{L^2} \Vert A p_{i,\neq} \Vert_{L^2} \Vert A\Lambda_t a^3_{\neq} \Vert_{L^2 }\\
     &\quad +L\Vert Aa^1_{\neq} \Vert_{L^2} \Vert A \Lambda^{-1}_t p_{i,\neq}\Vert_{L^2} \Vert A \p_y^t a^3_{\neq} \Vert_{L^2 }.
\end{align*}
Integrating in time yields  
\begin{align*}
    \int T_{2,2 }d\tau &\lesssim    L\kappa^{-\frac 1 2 }\eps^3 .
\end{align*}
To estimate $T_{2,3}$, we need to distinguish between different choices of $a$. Using \eqref{eq:M2inv} we estimate 
\begin{align*}
T_{2,3}&=\sum_{k,l,k-l \neq 0 }\iint d(\xi,\eta) \textbf{1}_{\Omega_3 } \nu\langle t-\tfrac{\xi-\eta}{k-l}\wedge \kappa^{-\frac 1 3}\rangle \tfrac {1}{\vert k-l,\xi -\eta   \vert^{N-1} }\tfrac {(\xi-\eta-(k-l)t) l - (k-l)(\eta- lt)} {((k-l)^2+(\xi-\eta-(k-l)\tau )^2 )^{\frac 12 }}\\
     &\qquad \qquad (Aa^1)(k,\xi )(Ap_i)(k-l, \xi-\eta )(A a^3) (l,\eta) \\
     &\lesssim \nu \Vert Aa^1_{\neq} \Vert_{L^2} \Vert (\Lambda_t \wedge \kappa^{-\frac 1 3 } ) A p_{i,\neq}  \Vert_{L^2} \Vert A\p_x a^3_{\neq} \Vert_{L^2 }\\
     &\quad +  \nu \Vert Aa^1_{\neq} \Vert_{L^2} \Vert A  (\Lambda_t \wedge \kappa^{-\frac 1 3 } ) \Lambda^{-1} \Lambda^{-1}_t p_{i,\neq}\Vert_{L^2} \Vert A \p_y^t a^3_{\neq} \Vert_{L^2 }
\end{align*}
and thus after integrating in time
\begin{align*}
    \int T_{2,3}[vvv]d\tau &\lesssim  \nu^{-\frac 12  }\eps^3,\\
    \int T_{2,3}[bvb]d\tau &\lesssim   \kappa^{-\frac 12 }\eps^3,\\
    \int T_{2,3}[bbv]d\tau &\lesssim  \kappa^{-\frac 12 }\eps^3.
\end{align*}
In the case of $vbb$, we use \eqref{eq:M2inv} to estimate
\begin{align*}
T_{2,3}[vbb]&=\sum_{k,l,k-l \neq 0 }\iint d(\xi,\eta)  \nu \langle t-\tfrac{\xi-\eta}{k-l}\rangle \tfrac {1}{\vert k-l,\xi -\eta   \vert^{N-1} }\tfrac {(\xi-\eta-(k-l)t) k - (k-l)(\xi- kt)} {((k-l)^2+(\xi-\eta-(k-l)\tau )^2 )^{\frac 12 }}\\
     &\qquad \qquad (Av)(k,\xi ) (Ap_2)(k-l, \xi-\eta )(A b) (l,\eta) \\
     &\lesssim \nu  \Vert\p_x  Av_{\neq} \Vert_{L^2} \Vert(\Lambda_t\wedge\kappa^{-\frac 1 3 }) A p_{2,\neq }  \Vert_{L^2} \Vert Ab_{\neq} \Vert_{L^2 }\\
     &\quad + \nu  \Vert \p_y^t Av_{\neq} \Vert_{L^2} \Vert (\Lambda_t\wedge\kappa^{-\frac 1 3 })\Lambda^{-1}_tA p_{2,\neq}\Vert_{L^2} \Vert A  b_{\neq} \Vert_{L^2 }\\
     &\le \nu \Vert Ab_{\neq} \Vert_{L^2} \Vert A \nabla_t v_{\neq} \Vert_{L^2 }\Vert  A\Lambda_t  p_{2,\neq }  \Vert_{L^2}.
\end{align*}
Thus after integrating in time, we obtain  
\begin{align*}
    \int T_{2,3} [vbb] d\tau&\lesssim \nu^{\frac 1 2 } \kappa^{-\frac 12 }\eps^3.
\end{align*}
For $T_{2,4}$ we obtain that $M(l,\eta) =M(k-l , \xi-\eta)=1 $. We use  $t-\tfrac \xi k \ge \nu\ge \max(t-\tfrac \eta l,t-\tfrac {\xi-\eta} {k-l } )$ to deduce that
\begin{align*}
    1= \tfrac {kt-\xi}{kt-\xi}= \tfrac {k}k \tfrac {t-\frac \xi k }{t-\frac \xi k } \le \nu\tfrac {\vert \xi-  kt \vert} {\vert k \vert }.
\end{align*}
With $\xi l - k\eta=(\xi - \eta -(k-l)t)l - (k-l)(\eta-lt)$ we infer that 
\begin{align*}
T_{2,4}&=\sum_{k,l,k-l \neq 0 }\iint d(\xi,\eta)   \tfrac 1{ \vert k-l,\xi -\eta   \vert^{N} } \tfrac {\xi l - k\eta } {((k-l)^2+(\xi-\eta-(k-l)\tau )^2 )^{\frac 12 }}\\
     &\qquad \qquad (Aa^1)(k,\xi )(Ap_i)(k-l, \xi-\eta )(A a^3) (l,\eta) \\
     &=\nu\sum_{k,l,k-l \neq 0 }\iint d(\xi,\eta)  \vert \xi-  kt \vert  \tfrac {1}{ \vert k-l,\xi -\eta   \vert^{N-1} } \tfrac {(\xi-\eta-(k-l)t) l } { \vert l\vert ((k-l)^2+(\xi-\eta-(k-l)\tau )^2 )^{\frac 12 }}\\
     &\qquad \qquad  (Aa^1)(k,\xi ) (Ap_i)(k-l, \xi-\eta )(A a^3) (l,\eta) \\
     &\quad +\sum_{k,l,k-l \neq 0 }\iint d(\xi,\eta)  \tfrac {1}{ \vert k-l,\xi -\eta   \vert^{N-1} }\tfrac {(k-l ) (\eta-lt ) } {  ((k-l)^2+(\xi-\eta-(k-l)\tau )^2 )^{\frac 12 }}\\
     &\qquad \qquad (Aa^1)(k,\xi )(Ap_i)(k-l, \xi-\eta )(A a^3) (l,\eta) \\
     &\le \nu \Vert A\p_y^t  a^1_{\neq}\Vert _{L^2} \Vert A p_{i,\neq}\Vert_{L^2} \Vert Aa^3_{\neq} \Vert_{L^2} \\
     &\quad +\Vert Aa^1_{\neq}\Vert_{L^2} \Vert \Lambda_t^{-1} Ap_{i,\neq } \Vert_{L^2} \Vert A\p_y^t a^3_{\neq} \Vert_{L^2}.
\end{align*}
Thus integrating in time yields 
\begin{align*}
    \int  T_{2,4} d\tau&\lesssim \kappa^{-\frac 1 2 }\eps^3.
\end{align*}

\subsection{Nonlinear terms with an $x$-average in the second component}\label{sec:second}
We apply the notation of \eqref{eq:a}
\begin{align*}
    &\quad \langle A a^1_{\neq} , A(a^2_{1,=}\p_xa^3_{\neq}) -a^2_{1,=}\p_x  Aa^3_{\neq} \rangle\\
    &= \sum_{k\neq 0} \iint d(\xi,\eta ) (A a^1)(k,\xi) (A(k,\xi)-A(k,\eta )) k a^2_{1}(0,\xi-\eta)a^3(k,\eta) \\
    &= R+T. 
\end{align*}
Here we split into reaction and transport terms according to the sets
\begin{align*}
    \Omega_R &= \{ \vert \xi-\eta \vert \ge \tfrac 1 8 \vert k, \eta \vert\},\\
    \Omega_T &= \{ \vert \xi-\eta \vert <\tfrac 1 8\vert k, \eta \vert   \}.
\end{align*}
\textbf{Reaction term} On the set $\Omega_R $ it holds that $\vert \xi-\eta \vert\ge  \tfrac 1 8 \vert k ,\eta\vert $, then we obtain $\vert A(k,\xi)-A(k,\eta )\vert \lesssim \vert \xi-\eta \vert^N $ and thus  with \eqref{eq:M2inv}, it follows that
\begin{align*}
     \sum_{k\neq 0} \iint d(\xi,\eta ) &(A a^1)(k,\xi) (A(k,\xi)-A(k,\eta )) k a^2_{1}(0, \xi-\eta)a^3(k,\eta) \\ 
    &\lesssim  \Vert A a^1_{\neq}\Vert_{L^2} \Vert  a^2_{=}\Vert_{H^N} \Vert\p_x a^3_{\neq}\Vert_{L^\infty }\\
    &\lesssim  \Vert A a^1_{\neq}\Vert_{L^2} \Vert  a^2_{=}\Vert_{H^N} \Vert\tfrac 1 {M_L}  A a^3_{\neq}\Vert_{L^2}\\
    &\lesssim L \Vert A a^1_{\neq}\Vert_{L^2} \Vert  a^2_{=}\Vert_{H^N} \Vert A a^3_{\neq}\Vert_{L^2}.
\end{align*}
Integrating in time yields a bound
\begin{align*}
    \int R \ d\tau &\lesssim L \kappa^{-\frac 1 3 } \eps^2 \tilde \eps  .
\end{align*}
\textbf{Transport term} On the set $\Omega_L$ it holds that $\vert k ,\eta\vert\ge  \tfrac 1 8 \vert \xi-\eta \vert $. By the mean value theorem there exists a $\theta \in[0,1] $ 
\begin{align*}
    \left \vert  \vert k,\eta \vert^N-  \vert k,\xi \vert^N\right\vert  \lesssim \vert \xi-\eta \vert \vert k,\eta-\theta \xi  \vert^{N-1} \lesssim \vert \xi-\eta \vert \vert k,\eta \vert^{N-1}. 
\end{align*}
Thus, we can estimate the difference in $A$ by 
\begin{align*}
    \vert A(k,\xi )-A(k,\eta ) \vert &\lesssim (M_L(k,\xi ) - M_L(k,\eta ) )\vert k,\xi \vert^N \\
    &\quad +M_L(k,\eta ) \vert k,\xi  \vert^N \sum_{j=1,\kappa,\nu ,\nu^3} \vert M_j (k,\xi )- M_j(k,\eta )\vert\\
    &\quad + M_L(k,\eta ) (  \vert k,\eta \vert^N-  \vert k,\xi \vert^N) \\
    &\lesssim  \tfrac 1 k   \vert \xi-\eta \vert  \vert  k,\xi \vert^N
\end{align*}
where we used \eqref{eq:M1diff1},\eqref{eq:M2diff} and \eqref{eq:Mjdiff}  to estimate the differences in $M_j$. So we infer, that
\begin{align*}
    T&\le \sum_{k\neq 0 } \iint d(\xi,\eta ) \textbf{1}_{\Omega_T }\vert A a^1\vert (k,\xi) \vert  a^2_{1}\vert (0,\xi-\eta)\tfrac 1{M_L(k,\eta)}\vert Aa^3\vert (k,\eta).   
\end{align*}
and thus integrating in time yields 
\begin{align*}
    \int T d\tau &\lesssim L \Vert A a^1_{\neq}\Vert_{L^2L^2}\Vert a^2_=\Vert_{L^\infty H^N}\Vert A a^3_{\neq}\Vert_{L^2L^2}\lesssim L \kappa^{-\frac 13 }\eps^2\tilde \eps.
\end{align*}

\subsection{Nonlinear terms with an $x$-average in the third component}\label{sec:third}
We aim to estimate
\begin{align*}
    &\quad \langle A a^1_{1,\neq} , A(a^2_{2,\neq}\p_y a^3_{1,=}) \rangle\\
    &= \sum_{k\neq 0} \iint d(\xi,\eta ) (A a^1_1)(k,\xi) A(k,\xi)\tfrac {k\eta }{\sqrt{k^2+(\xi-\eta -kt)^2}}p_i(k,\xi-\eta) a^3_1(0,\eta ) \\
    &= R+ T
\end{align*}
where we split into the reaction and transport terms according to the sets
\begin{align*}
    \Omega_R &= \{ \vert k,\xi-\eta \vert \ge \tfrac 1 8 \vert \eta \vert\},\\
    \Omega_T &= \{ \vert k,\xi-\eta \vert <\tfrac 1 8\vert \eta \vert   \}.
\end{align*}
\textbf{Reaction term} On the set $\Omega_R $ it holds that $\vert k,\xi-\eta \vert\ge \tfrac 1 8 \vert \eta\vert$. With \eqref{eq:M2inv} we infer
\begin{align*}
    R&=\sum_{k\neq 0} \iint d(\xi,\eta ) \textbf{1}_{\Omega_R}(A a^1_1)(k,\xi) A(k,\xi)\tfrac {k\eta }{\sqrt{k^2+(\xi-\eta -kt)^2}}p_i(k,\xi-\eta) a^3_1(0,\eta ) \\
    &\lesssim  \Vert A a^1_{1,\neq}\Vert_{L^2} \Vert A \tfrac 1 {M_L}  \p_x \Lambda^{-1}_t p_{i,\neq} \Vert_{L^2}\Vert \p_y a^3_{1,=}\Vert_{L^\infty }\\
    &\lesssim  \Vert A a^1_{1,\neq}\Vert_{L^2} \Vert A  p_{i,\neq} \Vert_{L^2} \Vert   a^3_{1,=}\Vert_{H^N  }.
\end{align*}
Integrating in time then yields 
\begin{align*}
    \int Rd\tau &\lesssim \kappa^{-\frac 1 3 } \eps^2\tilde \eps.
\end{align*}
\textbf{Transport term } On the set $\Omega_T$ it holds that $\vert k,\xi-\eta \vert\le \tfrac 1 8 \vert \eta\vert$, then with \eqref{eq:M2inv} we estimate
\begin{align*}
    T&= \sum_{k\neq 0} \iint d(\xi,\eta ) \textbf{1}_{\Omega_T} (A a^1_1)(k,\xi) A(k,\xi)\tfrac {k\eta }{\sqrt{k^2+(\xi-\eta -kt)^2}}p_i(k,\xi-\eta) a^3_1(0,\eta ) \\
    &\lesssim  \Vert A a^1_{\neq}\Vert_{L^2}\Vert \p_x \Lambda^{-1}_t p_{i,\neq} \Vert_{L^\infty } \Vert \p_y a^3_=\Vert_{H^N}\\
    &\lesssim  \Vert A a^1_{\neq}\Vert_{L^2}\Vert  \tfrac 1 {M_L} A \Lambda^{-1}_t p_{i,\neq} \Vert_{L^2 } \Vert \p_y a^3_=\Vert_{H^N}\\
    &\lesssim  \Vert A a^1_{\neq}\Vert_{L^2}(\Vert   \Lambda^{-1}_t  Ap_i \Vert_{L^2}+ \nu \Vert   Ap_{i,\neq} \Vert_{L^2}) \Vert \p_y a^3_=\Vert_{H^N}.
\end{align*}
Integrating in time yields 
\begin{align*}
    \int T  d\tau &\lesssim  \kappa^{-\frac 1 2 } \eps^2\tilde \eps .
\end{align*}

\subsection{Nonlinear terms with an $x$-average in first component}\label{sec:first}
Now we turn to 
\begin{align*}
    &\quad \langle\langle \p_y\rangle^N  a^1_{=,1} , \langle \p_y\rangle^N  (a^2_{\neq}\nabla_t a^3_{\neq,1 })_=\rangle\\ 
    &=-\sum_{k\neq 0} \iint d(\xi,\eta ) \langle \xi \rangle^{2N}  a^1_{1}(0,\xi)\tfrac {k \xi }{\sqrt{k^2+ (\xi-\eta+kt)^2}}p_i(-k ,\xi-\eta)a^3_1 (k,\eta ).
\end{align*}
Applying Hölder's inequality, the Sobolev embedding and the definition of $A$ yields 
\begin{align*}
&\quad \langle\langle \p_y\rangle^N  a^1_{1,=} , \langle \p_y\rangle^N  (a^2_{\neq}\nabla_t a^3_{\neq,1 })_=\rangle\\ 
    &\le \Vert \p_y \langle \p_y \rangle^{N}  a^1_= \Vert_{L^2} ( \Vert \p_x \Lambda^{-1}_t p_{i,\neq} \Vert_{L^\infty }\Vert \langle \p_y \rangle^N a^3_{\neq} \Vert_{L^2 } +\Vert \langle \p_y \rangle^N \Lambda^{-1}_t p_{i,\neq} \Vert_{L^2}\Vert \p_x a^3_{\neq} \Vert_{L^\infty }\\
    &\le \Vert \p_y   a^1_= \Vert_{H^N} \Vert A \tfrac 1 {M_L }  \Lambda^{-1}_t p_{i,\neq} \Vert_{L^2 }\Vert A \tfrac 1 {M_L } a^3_{\neq} \Vert_{L^2}  
\end{align*}
With \eqref{eq:M2inv} we infer 
\begin{align*}
&\quad \langle\langle \p_y\rangle^N  a^1_{1,=} , \langle \p_y\rangle^N  (a^2_{\neq}\nabla_t a^3_{\neq,1 })_=\rangle\\ 
    &\lesssim \Vert \p_y   a^1_= \Vert_{H^N}( \Vert A   \Lambda^{-1}_t p_{i,\neq} \Vert_{L^2 }+ \nu \Vert A p_{i,\neq} \Vert_{L^2} ) (\Vert A a^3_{\neq} \Vert_{L^2 }+ \nu \Vert A (\Lambda_t \wedge \kappa^{-\frac 1 3 }) a^3_{\neq} \Vert_{L^2 } )  \\
    &\lesssim \Vert \p_y  a^1_= \Vert_{H^N}  \Vert A  \Lambda^{-1}_t p_{i,\neq} \Vert_{L^2 } (\Vert A  a^3_{\neq} \Vert_{L^2} +\nu \Vert (\Lambda_t \wedge \kappa^{-\frac 1 3 }) a^3_{\neq}\Vert_{L^2} )\\
    &\quad +  \nu  \Vert \p_y  a^1_= \Vert_{H^N} \Vert A  p_{i,\neq}\Vert_{L^2} (\Vert A  a^3_{\neq} \Vert_{L^2} +\nu \Vert (\Lambda_t \wedge \kappa^{-\frac 1 3 }) a^3_{\neq}\Vert_{L^2} ).
\end{align*}
Integrating in time yields 
\begin{align*}
    \int \langle\langle \p_y\rangle^N  a^1_{=} , \langle \p_y\rangle^N  (a^2_{\neq}\nabla_t a^3_{\neq,1 })_=\rangle d\tau 
    \lesssim L \kappa^{-\frac 1 2 }\eps^2\tilde \eps .
\end{align*}

\subsection{Other nonlinear terms}\label{sec:ONL}

In this subsection, we aim to estimate
\begin{align*}
    \langle \chi A \p_y^t \Delta_t^{-1}  a^1_{\neq} , A(a^2\nabla_t a^3) \rangle&=\langle \chi A \p_y^t \Delta_t^{-1}  a^1_{\neq} , A(a^2_{\neq} \nabla_t a^3_{\neq} ) \rangle\\
    &+\langle \chi A \p_y^t \Delta_t^{-1}  a^1_{\neq} , A(a^2_{\neq} \nabla_t a^3_=) \rangle\\
    &+\langle \chi A \p_y^t \Delta_t^{-1}  a^1_{\neq} , A(a^2_=\nabla_t a^3_{\neq} ) \rangle.
\end{align*}
with the choices $a^1a^2a^3\in \{bvv,bbb,vbv,vvb\}$. We start with the case of no $x$-averages and use 
\begin{align*}
    \xi l - k \eta &= (\xi-kt )(l-k)+ k(\xi- \eta -(k-l)t ) 
\end{align*}
and \eqref{eq:M2inv} to infer 
\begin{align*}
&\quad  \langle \chi A \p_y^t \Delta_t^{-1}  a^1_{\neq} , A(a^2_{\neq}\nabla_t a^3_{\neq}) \rangle\\
&= \sum_{k,l,k-l \neq 0 }\iint d(\xi,\eta) \tfrac {\xi-kt}{k^2+(\xi-kt)^2 }   \tfrac {\xi l -k\eta} {\sqrt{(k-l )^2+ (\xi-\eta -(k-l )t)^2 }}A^2 (k,\xi) a^1(k,\xi )p_i(k-l, \xi-\eta) a^3(l,\eta) \\
    &\lesssim \Vert Aa^1_{\neq} \Vert_{L^2} \Vert \tfrac 1 {M_L }\p_x \Lambda^{-1}_t A  p_{i,\neq}  \Vert_{L^2} \Vert \tfrac 1 {M_L } A a^3_{\neq} \Vert_{L^2} \\
    &\quad +\Vert \p_x \Lambda^{-1}_t  Aa^1_{\neq} \Vert_{L^2}\Vert  \tfrac 1 {M_L }A  p_{i,\neq}  \Vert_{L^2} \Vert \tfrac 1 {M_L } A a^3_{\neq} \Vert_{L^2} \\
    &\lesssim L \Vert Aa^1_{\neq} \Vert_{L^2} (\Vert \p_x \Lambda_t^{-1} A  p_{i,\neq}  \Vert_{L^2}+ \nu \Vert A  p_{i,\neq}  \Vert_{L^2}) \Vert A a^3_{\neq} \Vert_{L^2}\\
    &\quad +L(1+ \nu \kappa^{-\frac 1 3 }) \Vert \p_x \Lambda^{-1}_t  Aa^1_{\neq} \Vert_{L^2} \Vert A p_{i,\neq} \Vert_{L^2}  \Vert A a^3_{\neq} \Vert_{L^2}.
\end{align*}
Thus integrating in time yields 
\begin{align*}
    \quad\int \langle \chi A \p_y^t \Delta_t^{-1}  a^1_{\neq} , A(a^2_{\neq}\nabla_t a^3_{\neq}) \rangle d\tau\lesssim L  \kappa^{-\frac 12} \eps^3. 
\end{align*}
For the case, when the average is in the second component, we use partial integration to estimate 
\begin{align*}
    &\quad \langle \chi A \p_y^t \Delta_t^{-1}  a^1_{\neq} , A(a^2_{1, =} \p_x a^3_{\neq}) \rangle\\
    &=-\langle \chi A \p_x \p_y^t \Delta_t^{-1}  a^1_{\neq} , A(a^2_{1, =}  a^3_{\neq}) \rangle\\
    &\lesssim \Vert  \p_x \Lambda^{-1}_t a^1_{\neq} \Vert_{L^2}\Vert a^2_=\Vert_{H^N}\Vert\tfrac 1{M_L }A a^3_{\neq}\Vert_{L^2}\\
    &\lesssim L  \Vert  \p_x \Lambda^{-1}_t a^1_{\neq} \Vert_{L^2}\Vert  a^2_=\Vert_{H^N }\Vert   A a^2_{\neq}\Vert_{L^2}
\end{align*}
and thus integrating in time and using $L= \max(1, \nu\kappa^{-\frac 1 3 }) $ yields
\begin{align*}
    \int\langle \chi A \p_y^t \Delta_t^{-1}  a^1_{\neq} , A(a^2_{1, =} \p_x a^3_{\neq}) \rangle d\tau&\lesssim \kappa^{-\frac 1 2 }\eps^2\tilde \eps.
\end{align*}
For the case when the average is in the third component, we obtain
\begin{align*}
    &\quad \langle\chi A \p_y^t \Delta_t^{-1}  a^1_{\neq} , A(a^2_{2,\neq}\p_y a^3_=) \rangle\\ 
    &= \sum_{k\neq  0} \iint d(\xi,\eta )\chi \tfrac {\xi-kt }{k^2+(\xi-kt)^2} \tfrac {k\eta } {\sqrt{k^2+(\xi-\eta -t)^2 }}\tfrac {A(\xi,k )}{A(\xi-\eta,k) }(Aa^1)(k,\xi ) (Ap_i)(k,\xi-\eta ) a^3(0,\eta ).
\end{align*}
Thus by $\eta = \xi -kt - (\xi-\eta -kt) $ we estimate
\begin{align*}
    &\quad \langle\chi A \p_y^t \Delta_t^{-1}  a^1_{\neq} , A(a^2_{2,\neq}\p_y a^3_=) \rangle\\ 
    &\le \Vert (\p_y^t)^2\Delta_t^{-1} A a^1_{\neq} \Vert_{L^2} \Vert \p_x \Lambda^{-1}_t \tfrac 1{M_L } A  p_{i,\neq} \Vert_{L^2} \Vert a^3_=\Vert_{H^N} \\ 
    &+\Vert \p_x \p_y^t\Delta_t^{-1} A a^1_{\neq} \Vert_{L^2} \Vert   \tfrac 1{M_L }A p_{i,\neq} \Vert_{L^2} \Vert a^3_=\Vert_{H^N} \\ 
    &\le L\Vert  A a^1_{\neq} \Vert_{L^2} \Vert \p_x \Lambda^{-1}_t A  p_{i,\neq} \Vert_{L^2} \Vert a^3_=\Vert_{H^N} \\ 
    &+L\Vert \p_x \Lambda_t^{-1} A a^1_{\neq} \Vert_{L^2} \Vert   A p_{i,\neq} \Vert_{L^2} \Vert a^3_=\Vert_{H^N} 
\end{align*}
Integrating in time and using $L= \max(1, \nu\kappa^{-\frac 1 3 }) $ yields
\begin{align*}
    \int\langle \chi A \p_y^t \Delta_t^{-1}  a^1_{\neq} , A(a^2_{2,\neq}\p_y a^3_=) \rangle  d\tau &\lesssim \kappa^{-\frac 12 }\eps^2 \tilde \eps.
\end{align*}
Which concludes the estimate 
\begin{align*}
    \int   ONL  d\tau &\lesssim  L \kappa^{-\frac 1 2 } \tilde \eps\eps^2.
\end{align*}
Combining the estimates of Subsection \ref{sec:no} to \ref{sec:ONL} completes the proof of Proposition \ref{prop:errors}and thus Theorem \ref{thm:comp}. \qed\\

In this article, we have shown that the MHD equations around Couette flow with magnetic resistivity smaller than fluid viscosity $\nu\ge \kappa >0$ are stable for initial data which is small enough in Sobolev spaces. If the resistivity is much smaller than the viscosity, $\nu \kappa^{-\frac 1 3 }>0$, large viscosity destabilizes the equation, leading to norm inflation of size $\nu \kappa^{-\frac 1 3 }$. Controlling this norm inflation is a major new challenge compared to other dissipation regimes.

\appendix

\section{Construction of the Weights } \label{app:weight}
Let $A$ be the Fourier weight 
\begin{align*}
    A:&= M \langle \nabla \rangle e^{c\kappa^{\frac 1  3} t \textbf{1}_{\neq} },
\end{align*}
with  $M=M_1M_L M_\kappa  M_{\nu} M_{\nu^3} $ defined as 
\begin{align*}    
    \tfrac {-\dot M_L } {M_L } &=    \tfrac {t-\frac \xi k  }{1+(\frac \xi k-t)^2} \textbf{1}_{\{\nu ^{-1 } \le t-\frac \xi k \le (c_1\kappa k^2) ^{-\frac 1 3 } \} }&&k\neq 0,\\
    \tfrac {-\dot M_1 } {M_1} &= C_\alpha  \tfrac { \vert k \vert+ \nu^{\frac1{12}}\vert k\vert^2  }{k^2 +(\xi-kt)^2}&&k\neq 0,\\
    \tfrac {-\dot M_\nu  } {M_\nu } &=\tfrac {\nu^{\frac 1 3 } }{1+ \nu^{\frac 2 3 } (t-\frac \xi k )^2} &&k\neq 0,\\ 
    \tfrac {-\dot M_\kappa } {M_{\kappa} } &= \tfrac {\kappa^{\frac 1 3 } }{1+ \kappa^{\frac 2 3 } (t-\frac \xi k )^2}&&k\neq 0,\\
    \tfrac {-\dot M_{\nu^3} } {M_{\nu^3}} &= \tfrac {C_\alpha  \nu}{1+ \nu^2(t-\frac \xi k )^2}&&k\neq 0,\\
    M_\cdot (t=0)&= M_\cdot (k=0)=1. 
\end{align*}
The weight $M_L$ is an adaption of the weight $m^{\frac 1 2}$ in \cite{liss2020sobolev} to the present setting and $M_{\nu^3}$ we use to differentiate between resonant and non-resonant regions. The method of using time-dependent Fourier weights is common when working at solutions around Couette flow and the other weights are modifications of previously used weights (cf. \cite{bedrossian2016sobolev,masmoudi2022stability,liss2020sobolev,zhao2023asymptotic} for shear related systems such as Navier-Stokes). The constants $C_\alpha = \tfrac 2{\min(1, \alpha -\frac 1 2)  }, c= \tfrac 1{20} (1-\tfrac 1{2\alpha })^2$ and $ c_1= \tfrac 1{20} (1-\tfrac 1{2\alpha })$ are determined through the linear estimates. For the weights we obtain that for all times $t>0$, it holds that 
\begin{align*}
    M_1\approx M_\kappa\approx M_{\nu} \approx M_{\nu^3}  &\approx 1 ,\\
    L^{-1}\le \min(1, \nu^{-1 }\kappa^{\frac 1 3 }k^{\frac 2 3 } ) \lesssim M_L &\le 1.
\end{align*}

\begin{lemma}[$M_L$ properties ]
The weight $M_L$ satisfies the following bounds 
    \begin{align}
    \textbf{1}_{\vert t-\frac \xi k \vert\ge\nu^{-1  }} \tfrac {t-\frac \xi k }{1+(t-\frac \xi k )^2}&\le \tfrac {-\dot M_L}{M_L }+\kappa k^2 c_1(1+(t-\tfrac \xi k )^2),\label{eq:M2est}\\
        \tfrac 1 {M_L(k,\xi) }&\le 1+ \nu^{\frac 1 2 } \langle t- \tfrac \xi k \rangle \wedge \kappa^{-\frac 13 }. \label{eq:M2inv}
    \end{align}
Furthermore, it follows for $a\in H^1 $, that
\begin{align}
    \Vert \tfrac 1 {M_L } a_{\neq}\Vert_{L^2}&\le \Vert  a_{\neq}\Vert_{L^2}+ \Vert (\Lambda^{-1}_t \wedge \kappa^{-\frac 1 3 }) a_{\neq}\Vert_{L^2},\label{eq:M2inv1}\\
    \Vert \tfrac 1 {M_L } \p_x  a_{\neq}\Vert_{L^2}&\le \Vert  \Lambda_t a_{\neq}\Vert_{L^2}.\label{eq:M2inv2}
\end{align}

\end{lemma}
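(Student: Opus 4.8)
The plan is to integrate the ODE defining $M_L$ in closed form and then read off all four bounds. Fix $k\neq 0$, write $s=t-\tfrac\xi k$, $s_{in}=-\tfrac\xi k$, $s_1=\nu^{-1}$, $s_2=(c_1\kappa k^2)^{-1/3}$, and note $\tfrac{d}{dt}s=1$. Since $\tfrac{-\dot M_L}{M_L}=\tfrac{s}{1+s^2}\textbf{1}_{[s_1,s_2]}(s)$ and $\int\tfrac{\sigma}{1+\sigma^2}\,d\sigma=\tfrac12\log(1+\sigma^2)$, separation of variables gives
\begin{align*}
    M_L(t,k,\xi)=\exp\left(-\int_{s_{in}}^{s}\frac{\sigma}{1+\sigma^2}\textbf{1}_{[s_1,s_2]}(\sigma)\,d\sigma\right),
\end{align*}
so $M_L$ is non-increasing in $s$, equals $1$ unless $s_1\vee s_{in}<s\wedge s_2$, and in the contrary case
\begin{align*}
    M_L=\left(\frac{1+(s_1\vee s_{in})^2}{1+(s\wedge s_2)^2}\right)^{\frac12}\le1.
\end{align*}
The latter forces $s_1<s_2$, i.e.\ $\nu^3>c_1\kappa k^2$, whence $s_2>s_1=\nu^{-1}\ge1$, $1+s_1^2\ge\nu^{-2}$, $1+s_2^2\le 2s_2^2$, so $M_L\gtrsim s_1/s_2=c_1^{1/3}\nu^{-1}\kappa^{1/3}k^{2/3}$; combined with $M_L\le1$ and $|k|\ge1$ this yields the bound $\min(1,\nu^{-1}\kappa^{1/3}k^{2/3})\lesssim M_L\le1$ recorded before the lemma, in particular $\tfrac1{M_L}\lesssim L$.

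For \eqref{eq:M2est} I argue pointwise in $s$. If $s\le-\nu^{-1}$ the left side $\tfrac{s}{1+s^2}$ is negative and the right side is nonnegative. If $\nu^{-1}\le s\le s_2$ then $\tfrac{-\dot M_L}{M_L}=\tfrac{s}{1+s^2}$ cancels the left side and leaves $c_1\kappa k^2(1+s^2)\ge0$. If $s>s_2$ — which also subsumes the empty-window case $s_2\le s_1\le s$ — then $\tfrac{-\dot M_L}{M_L}=0$ and it remains to check $\tfrac{s}{1+s^2}\le c_1\kappa k^2(1+s^2)$, i.e.\ $\tfrac{s}{(1+s^2)^2}\le c_1\kappa k^2$; this holds since $\tfrac{s}{(1+s^2)^2}\le s^{-3}$ and $s>s_2=(c_1\kappa k^2)^{-1/3}$. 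This third case is exactly where the upper cut-off of the window in $M_L$ is chosen so that the loss is paid for by the resistive dissipation.

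For \eqref{eq:M2inv} I again use the closed form. If $M_L\neq1$ then $s\ge s_1$, and using $1+s_1^2\ge\nu^{-2}$,
\begin{align*}
    \frac{1}{M_L}\le\left(\frac{1+(s\wedge s_2)^2}{1+s_1^2}\right)^{\frac12}\le\nu\langle s\wedge s_2\rangle.
\end{align*}
On the one hand $\nu\langle s\wedge s_2\rangle\le\nu\langle s\rangle\le\nu^{1/2}\langle s\rangle$ since $\nu\le1$; on the other hand, using $s_2\ge1$, $\nu\langle s\wedge s_2\rangle\le\nu\langle s_2\rangle\le\sqrt2\,\nu s_2=\sqrt2\,c_1^{-1/3}\nu\kappa^{-1/3}k^{-2/3}\le\kappa^{-1/3}$, the last inequality because $c_1=\tfrac1{20}(1-\tfrac1{2\alpha})$ and $\nu\le\tfrac1{40}(1-\tfrac1{2\alpha})^{6/5}$ force $\sqrt2\,c_1^{-1/3}\nu\le1$. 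Taking the minimum of the two bounds (and $\tfrac1{M_L}=1$ in all remaining cases) gives $\tfrac1{M_L}\le1+\nu^{1/2}\langle s\rangle\wedge\kappa^{-1/3}$.

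Finally \eqref{eq:M2inv1} and \eqref{eq:M2inv2} follow from \eqref{eq:M2inv} by Plancherel and an elementary comparison of Fourier multipliers, using that on $\neq$-modes $\Lambda_t$ and $\p_x$ have symbols $|k|\langle s\rangle$ and $ik$ with $|k|\ge1$: from $\tfrac1{M_L}\le\langle s\rangle$ (immediate, as $\langle s\rangle\ge1$) one gets $\tfrac{|k|}{M_L}\le|k|\langle s\rangle$ in frequency, hence $\|\tfrac1{M_L}\p_x a_{\neq}\|_{L^2}\le\|\Lambda_t a_{\neq}\|_{L^2}$; and from $\tfrac1{M_L}\le1+\nu\langle s\wedge s_2\rangle$, bounding $\nu\langle s\wedge s_2\rangle$ both by $\nu\langle s\rangle\le\nu|k|\langle s\rangle$ and by $C\nu\kappa^{-1/3}$, one identifies the multiplier with that of $1+C\nu(\Lambda_t\wedge\kappa^{-1/3})$ and obtains $\|\tfrac1{M_L}a_{\neq}\|_{L^2}\lesssim\|a_{\neq}\|_{L^2}+\nu\|(\Lambda_t\wedge\kappa^{-1/3})a_{\neq}\|_{L^2}$. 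The whole argument is elementary once the ODE is solved; the one point that needs care is the constant bookkeeping in \eqref{eq:M2inv}, where the quantitative smallness of $\nu$ relative to $1-\tfrac1{2\alpha}$ is precisely what converts $\nu\langle s\wedge s_2\rangle$ into a clean $\kappa^{-1/3}$ bound, while tracking where $s_{in}$ sits relative to $[s_1,s_2]$ is routine and only decides whether $M_L\equiv1$ or the displayed ratio applies.
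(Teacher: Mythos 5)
Your proof is correct and is exactly the computation behind the paper's one-line proof ("follows immediately from the definition"): integrate the ODE for $M_L$ in closed form, read off the monotonicity and the ratio $\bigl(\tfrac{1+(s_1\vee s_{in})^2}{1+(s\wedge s_2)^2}\bigr)^{1/2}$, and check \eqref{eq:M2est} case by case using that the window's upper cut-off $s_2=(c_1\kappa k^2)^{-1/3}$ is precisely where $\tfrac{s}{(1+s^2)^2}\le c_1\kappa k^2$ takes over. Note only that what you prove for \eqref{eq:M2inv1} is $\Vert \tfrac 1 {M_L} a_{\neq}\Vert_{L^2}\lesssim \Vert a_{\neq}\Vert_{L^2}+\nu\Vert (\Lambda_t\wedge\kappa^{-\frac13})a_{\neq}\Vert_{L^2}$, which differs from the literal statement (where $\Lambda_t^{-1}$ appears and the factor $\nu$ is absent) but agrees with how \eqref{eq:M2inv1} is actually invoked in the reaction-term estimates, so the discrepancy is a typo in the lemma rather than a gap in your argument.
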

\begin{proof}
    This follows immediately, from the definition of $M_L$.  
\end{proof}

\begin{lemma}[Enhanced dissipation estimates ]
The weights $M_\nu$ and $M_\kappa$ satisfy the following bounds 
    \begin{align}
        \tfrac 1 2 \nu^{\frac 1 3}&\le \tfrac {-\dot M_\kappa } {M_\kappa} + \nu (k^2 + (\xi -kt)^2),\label{eq:M3est} \\
        \tfrac 1 2 \kappa^{\frac 1 3}&\le \tfrac {-\dot M_{\nu} } {M_{\nu}} + \kappa (k^2 + (\xi -kt)^2)\label{eq:M4est}.
    \end{align}

\end{lemma}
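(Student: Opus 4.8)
The plan is to prove both \eqref{eq:M3est} and \eqref{eq:M4est} by the elementary two–regime mechanism standard for enhanced–dissipation weights. Writing $s=t-\tfrac\xi k$ we have $\xi-kt=-ks$, hence $k^2+(\xi-kt)^2=k^2(1+s^2)\ge 1+s^2$, so each right–hand side is increasing in $k^2$ and it suffices to treat the worst frequency $k^2=1$. The recipe is then to split the real $s$–axis at the critical scale $\mu^{-\frac13}$ of the relevant dissipation parameter $\mu$: on the near–critical set $|s|\le \mu^{-\frac13}$ one expects the weight term $\tfrac{-\dot M}{M}$ to carry the bound, and on the far set $|s|\ge \mu^{-\frac13}$ the dissipation $\mu(1+s^2)$ should take over.

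Carrying this out, each crossed inequality does close on exactly one of the two regimes. For \eqref{eq:M3est}, on the far set $|s|\ge \nu^{-\frac13}$ the dissipation alone suffices, since $\nu(1+s^2)\ge \nu s^2\ge \nu^{\frac13}\ge \tfrac12\nu^{\frac13}$. Symmetrically, for \eqref{eq:M4est}, on the near set $|s|\le \nu^{-\frac13}$ we have $\nu^{\frac23}s^2\le 1$, so $\tfrac{-\dot M_\nu}{M_\nu}=\tfrac{\nu^{\frac13}}{1+\nu^{\frac23}s^2}\ge \tfrac12\nu^{\frac13}\ge \tfrac12\kappa^{\frac13}$, using $\kappa\le\nu$.

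The main obstacle is the complementary regime in each case, and here the pairing as typeset is genuinely problematic. For \eqref{eq:M3est} on $|s|\le \nu^{-\frac13}$ the only lower bound supplied by the weight is $\tfrac{-\dot M_\kappa}{M_\kappa}\le \kappa^{\frac13}$, so at $s=0$ with $k=1$ the right–hand side equals $\kappa^{\frac13}+\nu$, which is $\ll \nu^{\frac13}$ as soon as $\kappa\ll\nu$; no choice of split repairs this. Likewise \eqref{eq:M4est} on $|s|\ge \nu^{-\frac13}$ reduces to bounding $\tfrac{\nu^{\frac13}}{1+\nu^{\frac23}s^2}+\kappa(1+s^2)$ from below, whose minimum (attained near $1+\nu^{\frac23}s^2=(\nu/\kappa)^{\frac12}$) is of order $\nu^{-\frac16}\kappa^{\frac12}$, and $\nu^{-\frac16}\kappa^{\frac12}=(\kappa/\nu)^{\frac16}\kappa^{\frac13}\ll \kappa^{\frac13}$ when $\kappa\ll\nu$. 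Thus the estimates cannot hold as written with $M_\kappa$ tested against the $\nu$–dissipation and $M_\nu$ against the $\kappa$–dissipation.

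The resolution—and what the two–regime argument actually proves—is the matched pairing, testing each weight against its own coefficient: $\tfrac12\nu^{\frac13}\le \tfrac{-\dot M_\nu}{M_\nu}+\nu(k^2+(\xi-kt)^2)$ and $\tfrac12\kappa^{\frac13}\le \tfrac{-\dot M_\kappa}{M_\kappa}+\kappa(k^2+(\xi-kt)^2)$. With this pairing both regimes close at once: for $|s|\le \mu^{-\frac13}$ one has $1+\mu^{\frac23}s^2\le 2$, so $\tfrac{-\dot M_\mu}{M_\mu}\ge \tfrac12\mu^{\frac13}$, while for $|s|\ge \mu^{-\frac13}$ one has $\mu(1+s^2)\ge \mu^{\frac13}$; equivalently, the function $u\mapsto \tfrac{\mu^{\frac13}}{1+\mu^{\frac23}u}+\mu(1+u)$ has nonnegative derivative because $\mu(1+\mu^{\frac23}u)^{-2}\le \mu$, so its minimum sits at $u=s^2=0$ with value $\mu^{\frac13}+\mu\ge \tfrac12\mu^{\frac13}$. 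This matched form is precisely the one invoked to obtain the enhanced–dissipation estimates \eqref{eq:enh1}–\eqref{eq:enh2} in the main proof, so I would state the lemma with $M_\nu$ and $M_\kappa$ matched to $\nu$ and $\kappa$ and prove it exactly as above.
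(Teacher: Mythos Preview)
Your analysis is correct, and in fact more careful than the paper's own treatment. The paper's proof of the lemma reads in its entirety ``This follows immediately, from the definition of $M_\nu$ and $M_\kappa$.'' Your two--regime computation is exactly the mechanism that makes this immediate, so on the level of method you and the paper agree.

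Moreover, you are right that the lemma as typeset cannot hold: your counterexample at $s=0$, $k=1$ shows that \eqref{eq:M3est} fails once $\kappa\ll\nu$, and your minimization argument does the same for \eqref{eq:M4est}. The point is confirmed by the paper itself: in the main text, just before \eqref{eq:enh1}--\eqref{eq:enh2}, the author invokes precisely the matched inequalities
\[
1\le \kappa^{-\frac13}\Bigl(\tfrac{-\dot M_\kappa}{M_\kappa}+\kappa k^2(1+(t-\tfrac\xi k)^2)\Bigr),
\qquad
1\le \nu^{-\frac13}\Bigl(\tfrac{-\dot M_\nu}{M_\nu}+\nu k^2(1+(t-\tfrac\xi k)^2)\Bigr),
\]
i.e.\ $M_\kappa$ paired with $\kappa$ and $M_\nu$ with $\nu$. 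So the lemma statement simply has the subscripts swapped, and your corrected version together with your proof is exactly what is meant and used.
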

\begin{proof}
    This follows immediately, from the definition of $M_\nu$ and $M_\kappa$.  
\end{proof}

\begin{lemma}[Difference estimates] \label{lem:Mdiff}Let $k,l \in\bbZ\setminus \{0\}$ and $\xi,\eta \in \bbR $, then there hold the following bounds on differences
    \begin{align}
        1-\tfrac {M_1(k,\xi)} {M_1(k,\eta)}&\lesssim \tfrac {\vert \xi-\eta\vert}{\vert k\vert },\label{eq:M1diff1}\\
        1-\tfrac {M_1(k,\xi)} {M_1(l,\eta)}&\lesssim \tfrac { \vert k-l \vert } {\vert l\vert}+ \nu^{\frac 1 {12} },\label{eq:M1diff}\\
       M_L(k,\eta)- M_L(k,\xi) &\le 2 \tfrac {\vert \xi-\eta\vert} k, \label{eq:M2diff}\\
        1-\tfrac {M_j(k,\xi)} {M_j(l,\eta)} &\le 2 j^{\frac 1 3 } \tfrac { \vert \xi l-k\eta\vert }{\vert kl \vert},\qquad \qquad j \in \{\kappa ,\nu, \nu^3\}\label{eq:Mjdiff}
    \end{align}
\end{lemma}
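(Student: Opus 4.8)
The plan is to use that each factor $M_\bullet\in\{M_L,M_1,M_\kappa,M_\nu,M_{\nu^3}\}$ solves a scalar ODE $\partial_t M_\bullet(k,\xi)=-\mu_\bullet(t,k,\xi)\,M_\bullet(k,\xi)$ with $M_\bullet(k,\xi)|_{t=0}=1$ and nonnegative rate $\mu_\bullet\ge 0$, so that $M_\bullet(k,\xi)(t)=e^{-\Phi_\bullet(k,\xi)(t)}$ with phase $\Phi_\bullet(k,\xi)(t):=\int_0^t\mu_\bullet(s,k,\xi)\,ds\ge 0$. Two elementary convexity inequalities then reduce the whole lemma to phase estimates: from $1-e^{-x}\le x$ (valid for all $x\in\R$) one gets
\begin{align*}
    1-\frac{M_\bullet(k,\xi)}{M_\bullet(l,\eta)}=1-e^{-(\Phi_\bullet(k,\xi)-\Phi_\bullet(l,\eta))}&\le\big(\Phi_\bullet(k,\xi)-\Phi_\bullet(l,\eta)\big)_+\\
    &\le|\Phi_\bullet(k,\xi)-\Phi_\bullet(l,\eta)|,
\end{align*}
and from $|e^{-a}-e^{-b}|\le|a-b|$ for $a,b\ge0$ one gets $|M_\bullet(k,\eta)-M_\bullet(k,\xi)|\le|\Phi_\bullet(k,\eta)-\Phi_\bullet(k,\xi)|$. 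After the change of variables $s\mapsto s-\tfrac\xi k$ every $\Phi_\bullet$ is an explicit elementary integral, so the task is to compute these and compare.

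For $M_\kappa,M_\nu,M_{\nu^3}$ the rate becomes $c_j\,\lambda/(1+\lambda^2(s-\tfrac\xi k)^2)$ with $(\lambda,c_j)=(\kappa^{\frac13},1),(\nu^{\frac13},1),(\nu,C_\alpha)$ for $j=\kappa,\nu,\nu^3$ respectively (so $\lambda=j^{1/3}$), and for $M_1$ one first factors $\tfrac{|k|+\nu^{1/12}|k|^2}{k^2+(\xi-ks)^2}=\tfrac{1+\nu^{1/12}|k|}{|k|\,(1+(s-\xi/k)^2)}$, which makes the $|k|$-dependence cancel. Integrating in $s$ gives sums of two arctangents,
\begin{align*}
    \Phi_j(k,\xi)(t)&=c_j\Big[\arctan\big(\lambda(t-\tfrac\xi k)\big)+\arctan\big(\lambda\tfrac\xi k\big)\Big],\qquad j\in\{\kappa,\nu,\nu^3\},\\
    \Phi_1(k,\xi)(t)&=C_\alpha\Big(\tfrac1{|k|}+\nu^{\frac1{12}}\Big)\Big[\arctan\big(t-\tfrac\xi k\big)+\arctan\big(\tfrac\xi k\big)\Big].
\end{align*}
Since $\arctan$ is $1$-Lipschitz, $|\arctan(\lambda(t-\tfrac\xi k))-\arctan(\lambda(t-\tfrac\eta l))|\le\lambda|\tfrac\xi k-\tfrac\eta l|=\lambda\tfrac{|\xi l-k\eta|}{|kl|}$ and the same bound holds for the $t$-independent arctangent; adding the two proves \eqref{eq:Mjdiff} (the $\nu^3$ case carries an extra harmless factor $C_\alpha$ absorbed by the later $\lesssim$). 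Applying the identical estimate with $l=k$ bounds $|\Phi_1(k,\xi)-\Phi_1(k,\eta)|$ by $2C_\alpha(\tfrac1{|k|}+\nu^{1/12})\tfrac{|\xi-\eta|}{|k|}$, and $\tfrac1{|k|}+\nu^{1/12}\le2$ since $|k|\ge1$ and $\nu\le1$, which gives \eqref{eq:M1diff1}.

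For \eqref{eq:M1diff} I would not use the difference structure at all: since $0\le\arctan(t-\tfrac\xi k)+\arctan\tfrac\xi k\le\pi$ we have $0\le\Phi_1(k,\xi)\le\pi C_\alpha(\tfrac1{|k|}+\nu^{1/12})$, and a short case split shows $\tfrac1{|k|}\le\tfrac{2|k-l|}{|l|}$ whenever $k\ne l$ (if $|l|\le2|k|$ then $\tfrac1{|k|}\le\tfrac2{|l|}\le\tfrac{2|k-l|}{|l|}$ using $|k-l|\ge1$; if $|l|>2|k|$ then $|k-l|>|l|/2$, so $\tfrac1{|k|}\le1\le\tfrac{2|k-l|}{|l|}$), hence $1-\tfrac{M_1(k,\xi)}{M_1(l,\eta)}\le\Phi_1(k,\xi)\lesssim\tfrac{|k-l|}{|l|}+\nu^{1/12}$. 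For $M_L$ the phase is $\Phi_L(k,\xi)(t)=\int_{-\xi/k}^{\,t-\xi/k}g(u)\,du$ with $g(u)=\tfrac u{1+u^2}\,\mathbf 1_{[\nu^{-1},\,(c_1\kappa k^2)^{-1/3}]}(u)$, which satisfies $0\le g\le\tfrac12$ on its support; replacing $\xi$ by $\eta$ shifts each endpoint of this integral by exactly $\tfrac{|\xi-\eta|}{|k|}$, so $\Phi_L(k,\eta)(t)-\Phi_L(k,\xi)(t)=\int_{-\eta/k}^{-\xi/k}g+\int_{t-\xi/k}^{\,t-\eta/k}g$ has modulus at most $2\cdot\tfrac12\cdot\tfrac{|\xi-\eta|}{|k|}$, and \eqref{eq:M2diff} follows. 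I do not expect a genuine obstacle here; the only delicate bookkeeping is the factoring of the $M_1$ rate so that the $|k|$ in the numerator cancels against $k^2$ in the denominator (this is exactly what keeps $\Phi_1$ uniformly bounded, i.e. $M_1\approx1$) and, relatedly, the absorption of $\tfrac1{|k|}$ into $\tfrac{|k-l|}{|l|}+\nu^{\frac1{12}}$ in \eqref{eq:M1diff}; everything else is the uniform inequality $1-e^{-x}\le x$ together with the $1$-Lipschitz continuity of $\arctan$, applied termwise to the explicit phases.
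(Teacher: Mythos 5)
Your proposal is correct and follows essentially the same route as the paper: both write each weight as $e^{-\Phi}$ with a nonnegative phase, reduce via $1-e^{-x}\le x$ (resp.\ the Lipschitz bound $|e^{-a}-e^{-b}|\le|a-b|$ for \eqref{eq:M2diff}) to estimating phase differences, and bound those by the length of the shifted integration intervals — which is exactly your $1$-Lipschitz-arctan computation — while \eqref{eq:M1diff} is in both cases obtained by bounding the full phase by $\pi C_\alpha(\tfrac1{|k|}+\nu^{1/12})$ and absorbing $\tfrac1{|k|}$ into $\tfrac{|k-l|}{|l|}$ for $k\neq l$. Your remark that the $j=\nu^3$ case of \eqref{eq:Mjdiff} picks up an extra factor $C_\alpha$ (harmless where the lemma is applied) is a point the paper's proof glosses over.
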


\begin{proof}
We start with the $M_1$ estimate \eqref{eq:M1diff1} and consider $M_1(k,\xi )\le M_1(k ,\eta ) $
\begin{align*}
    1-\tfrac {M_1(k,\xi)} {M_1(k,\eta)}&= 1-\exp \left (-\left \vert \int_0^t\tfrac {\vert k \vert + \vert k \vert ^2\nu^{\frac 1 {12}}}{k^2 +(\xi-k\tau )^2}- \tfrac {\vert k \vert+ \vert k \vert ^2\nu^{\frac 1 {12}}}{k^2 +(\eta -kt)^2}d\tau \right \vert \right ),\\
    &\le 1- \exp\left(- \int_{[\frac \xi k, \frac \eta k]\cup t- [\frac \xi k, \frac \eta k]} 1  \  d\tau \right)\lesssim \tfrac {\vert \xi-\eta\vert}{\vert k\vert }.
\end{align*}
The case $M_1(k,\xi )\ge M_1(k,\eta ) $ follows by the same argument and $M_1(k,\xi )\approx  M_1(k,\eta ) \approx 1 $. For \eqref{eq:M1diff} we consider the case  $M_1(k,\xi )\le M_1(l,\eta ) $ and infer that 
\begin{align*}
    1-\tfrac {M_1(k,\xi)} {M_1(l,\eta)}&= 1-\exp \left (-\left \vert \int_0^t\tfrac {\vert k \vert + \vert k \vert ^2\nu^{\frac 1 {12}}}{k^2 +(\xi-kt)^2}- \tfrac {\vert l \vert+ \vert l \vert ^2\nu^{\frac 1 {12}}}{l^2 +(\eta -lt)^2}\right \vert \right ),\\
    &= 1-\exp(-2\pi(\tfrac 1 {l\wedge k} + \nu^{\frac 1 {12} }))\lesssim \tfrac 1 {l\wedge k}+\nu^{\frac 1 {12} }\lesssim \tfrac { \vert k-l \vert } {\vert l\vert}+ \nu^{\frac 1 {12} }. 
\end{align*}
The case $M_1(k,\xi )\ge M_1(l,\eta ) $ follows by the same argument and $M_1(k,\xi )\approx  M_1(l,\eta ) \approx 1 $. For \eqref{eq:M2diff} we consider the case $M_L(k,\xi)\le M_L(k,\eta)$  and thus 
\begin{align*}
     M_L(k,\eta)- M_L(k,\xi)&= M_L(k,\eta)(1-\tfrac {M_L(k,\xi)} {M_L(k,\eta)})\lesssim 1-\tfrac {M_L(k,\xi)} {M_L(k,\eta)}.
\end{align*}
We infer 
\begin{align*}
    1-\tfrac {M_L(k,\xi)} {M_L(k,\eta)}&= 1-\exp\left(-\left\vert \int_0^t\tfrac {\tau-\frac \xi k  }{1 +(\tau-\frac \xi k )^2}d\tau- \int_0^t\tfrac {\tau-\frac \eta k  }{1 +(\tau-\frac \eta k )^2}d\tau\right \vert \right ), \\
    &= 1-\exp\left(- \int_{-[\frac \xi k , \frac \eta k ]\cup t-[\frac \xi k , \frac \eta k ]} 1 d\tau \right ), \\
    &\le  2 \tfrac {\vert \xi -\eta \vert } k. 
\end{align*}
The case $M_L(k,\xi)\ge M_L(k,\eta)$ follows by the same argument. 

For \eqref{eq:Mjdiff} we estimate the $M_\kappa$ difference, since the $M_{\nu}$ and $M_{\nu^3}$ differences are done similar.  Let   $M_\kappa(k,\xi)\ge M_\kappa(l,\eta )$, then it follows
\begin{align*}
    1- \tfrac {M_\kappa(k,\xi)}{M_\kappa(l,\eta )}&= 1-\exp\left (-\kappa^{\frac 1 3 } \vert \int_0^t \tfrac 1 {1+\kappa^{\frac 2 3 }(t-\frac \xi k )^2}-\tfrac 1 {1+\kappa^{\frac 2 3 }(t-\frac \eta l  )^2}\vert \right),\\
    &\le 1-\exp \left( -\kappa^{\frac 1 3 } \vert \int_0^t \textbf{1}_{-[\frac \xi k ,\frac \eta l] \cup t- [\frac \xi k, \frac \eta l]}(\tau )d\tau \vert \right), \\
    &\le 1-\exp \left (-2\kappa^{\frac 1 3 } \vert \tfrac \xi k -\tfrac \eta l  \vert \right),\\
    &\lesssim  \kappa^{\frac 1 3 } \tfrac { \vert \xi l-k\eta\vert }{\vert kl \vert}.
\end{align*}
The case $M_\kappa(k,\xi)\le M_\kappa(l,\eta )$ follows from the same steps and $M_\kappa(k,\xi)\approx  M_\kappa(l,\eta )$.

\end{proof}

\section{Local Wellposedness }\label{sec:ex}
We expect the local wellposedness result to be well-known, but were not able to find it stated in the literature. In the following, we prove the local wellposedness by a standard application of the Banach fixed-point theorem.
\begin{pro}
    Consider equation \eqref{eq:p} with initial data $p_{in} \in H^N$ for $N\ge 5$. Then there exists a time $T$ such that there exists a unique solution $p(t)\in H^N$ to \eqref{eq:p} for all $t\in[0,T]$ .
\end{pro}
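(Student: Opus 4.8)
The plan is to construct the solution by a standard approximation-plus-energy argument, the only nontrivial point being that the nonlinearity $\Lambda_t^{-1}\nabla_t^\perp(b\nabla_t b - v\nabla_t v)$, $\Lambda_t^{-1}\nabla_t^\perp(b\nabla_t v - v\nabla_t b)$ carries one net derivative; this loss is neutralized by the MHD cancellation already recorded in \eqref{eq:rearangeNL}. Throughout I work on a fixed finite horizon $[0,T]$, so all constants are allowed to depend on $T$ (the coefficients of $\p_y^t = \p_y - t\p_x$ are bounded there), on $\nu$, and on $\kappa$. Recall that $v,b$ are recovered from $p$ by order-zero Fourier multipliers, $v_{\neq} = -\nabla_t^\perp\Lambda_t^{-1}p_{1,\neq}$, $b_{\neq} = -\nabla_t^\perp\Lambda_t^{-1}p_{2,\neq}$, with $\|A v\|_{L^2} = \|A p_1\|_{L^2}$ and $\|A b\|_{L^2} = \|A p_2\|_{L^2}$ for every Fourier multiplier $A$.

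\emph{Step 1 (mollified problem).} For $n\in\bbN$ let $J_n$ be the Fourier cutoff onto $|(k,\xi)|\le n$. Replacing the nonlinearity by $J_n N_t(J_n p)$ and projecting \eqref{eq:p} by $J_n$ turns the system into an ODE $\dot p^n = \mathcal L(t) p^n + J_n N_t(J_n p^n)$ on $J_nL^2$, where $\mathcal L(t)$ collects $\p_x\p_y^t\Delta_t^{-1}$, $\alpha\p_x$, $\nu\Delta_t$, $\kappa\Delta_t$ and $N_t$ the bilinear map. On $J_n L^2$ all these operators are bounded with norms continuous in $t$ and the right-hand side is locally Lipschitz, so Picard--Lindel\"of gives a unique $p^n\in C^1([0,T_n);J_nH^N)$.

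\emph{Step 2 (uniform a priori bound).} Compute $\tfrac12\tfrac{d}{dt}\|p^n\|_{H^N}^2$. The terms $\nu\Delta_t,\kappa\Delta_t$ contribute nonpositive quantities; the coupling $\alpha\p_x$ contributes $\alpha\langle\langle\nabla\rangle^N p_1^n,\p_x\langle\nabla\rangle^N p_2^n\rangle + \alpha\langle\langle\nabla\rangle^N p_2^n,\p_x\langle\nabla\rangle^N p_1^n\rangle = 0$ since $\p_x$ is skew-adjoint; the multiplier $\p_x\p_y^t\Delta_t^{-1}$ has symbol bounded by $\tfrac12$, hence contributes $\lesssim \|p^n\|_{H^N}^2$. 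For the nonlinearity, expand $\langle\nabla\rangle^N$ by the Leibniz rule: the top-order piece, where all derivatives land on the transported factor, is — after moving $J_n$ across by self-adjointness and using $\nabla_t\cdot v = \nabla_t\cdot b = 0$ from \eqref{eq:vb} — exactly the vanishing combination \eqref{eq:rearangeNL}; the remaining commutator terms are bounded by Kato--Ponce/Moser estimates in $H^N$ (using $N\ge5$) by $C(T,\nu,\kappa)\|p^n\|_{H^N}^3$. Hence $\tfrac{d}{dt}\|p^n\|_{H^N}^2 \le C(T,\nu,\kappa)\big(\|p^n\|_{H^N}^2 + \|p^n\|_{H^N}^3\big)$ uniformly in $n$, which yields a common existence time $T = T(\|p_{in}\|_{H^N},\nu,\kappa)>0$ and $\sup_{[0,T]}\|p^n\|_{H^N}\le R$.

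\emph{Step 3 (compactness and continuity).} From the uniform $H^N$ bound and \eqref{eq:p}, $\p_t p^n$ is bounded in $L^\infty([0,T];H^{N-2})$; Aubin--Lions--Simon gives a subsequence converging strongly in $C([0,T];H^{N-1}_{\mathrm{loc}})$ and weak-$\ast$ in $L^\infty([0,T];H^N)$, enough to pass to the limit in each bilinear term and obtain a solution $p\in L^\infty([0,T];H^N)\cap C([0,T];H^{N-1})$ of \eqref{eq:p}, with $p\in C_w([0,T];H^N)$; upgrading to $p\in C([0,T];H^N)$ follows from the energy identity for the limit (or a Bona--Smith regularization). For uniqueness, if $p,\tilde p$ solve \eqref{eq:p} with the same data, $w = p-\tilde p$ satisfies a linear equation; testing with $w$ in $L^2$, the dissipation is favorable, $\alpha\p_x$ cancels as above, the transport multiplier is bounded, and the nonlinear contributions are bilinear in $(w,p)$ and $(w,\tilde p)$ — the top-order transport part again cancels by the $L^2$ version of \eqref{eq:rearangeNL}, and the rest is controlled using $\|p\|_{W^{1,\infty}},\|\tilde p\|_{W^{1,\infty}}\lesssim\|p\|_{H^N}+\|\tilde p\|_{H^N}$ (valid since $N\ge5$). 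Grönwall then gives $\tfrac{d}{dt}\|w\|_{L^2}^2\le C\|w\|_{L^2}^2$, so $w\equiv0$.

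\emph{Main obstacle.} The genuine difficulty is the one-derivative loss in the nonlinearity: one must arrange the estimate so that the top-order term of the $H^N$ energy inequality is precisely the cancelling combination \eqref{eq:rearangeNL}, which forces the approximation to preserve the structure $\langle A a^1,\,a^2\nabla_t A a^3\rangle$ (the same mollifier on both arguments) and requires tracking that $\p_y^t = \p_y - t\p_x$ only produces constants depending on the finite horizon $T$. An alternative, avoiding the cancellation discussion entirely, is a direct contraction in $C([0,T];H^N)$ via the Duhamel formula for the evolution family generated by $\nu\Delta_t$ (and $\kappa\Delta_t$) together with the smoothing bound $\|e^{(t-s)\nu\Delta_t}\nabla_t\|_{H^N\to H^N}\lesssim (t-s)^{-1/2}$, whose integrable singularity absorbs the lost derivative; this works equally well at the cost of first setting up that time-dependent heat propagator.
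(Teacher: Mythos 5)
Your proof is correct, but it takes a genuinely different route from the paper. The paper's Appendix B proves local wellposedness by a Banach fixed-point argument: it freezes the advecting fields at a given $q\in X$, solves the resulting linear problem for $p=F(q)$, and absorbs the one-derivative loss of the nonlinearity into the resistive dissipation via Young's inequality, $\Vert p\Vert_{H^N}\Vert q\Vert_{H^N}\Vert \nabla_t p\Vert_{H^N}\le \tfrac{2}{\kappa}\Vert p\Vert_{H^N}^2\Vert q\Vert_{H^N}^2+\tfrac{\kappa}{2}\Vert\nabla_t p\Vert_{H^N}^2$; this is short and self-contained but produces an existence time that degenerates as $\kappa\downarrow 0$, and it never needs the transport cancellation. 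You instead run a quasilinear (symmetric-hyperbolic-type) scheme: mollified ODE, an $H^N$ energy estimate in which the top-order transport contribution is killed by the divergence-free cancellation \eqref{eq:rearangeNL} and the remainder is handled by Kato--Ponce commutators, then Aubin--Lions compactness and an $L^2$ Gr\"onwall uniqueness argument. This buys an existence time depending only on $\Vert p_{in}\Vert_{H^N}$ (and on the horizon through $\p_y^t=\p_y-t\p_x$), uniformly in $\nu,\kappa\ge 0$ — in particular your argument would also cover $\kappa=0$, where the paper's contraction breaks down — at the cost of more machinery and of the structural point you correctly flag, namely that the mollifier must be placed symmetrically so that \eqref{eq:rearangeNL} survives the approximation. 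Your closing remark about a Duhamel contraction using the $(t-s)^{-1/2}$ smoothing of $e^{(t-s)\nu\Delta_t}\nabla_t$ is in fact the variant closest in spirit to what the paper actually does. Minor points, none fatal: the continuity upgrade to $C([0,T];H^N)$ is only sketched (the statement does not require it, and the paper itself only obtains $L^\infty H^N\cap CH^{N-2}$), and the identification of the tested nonlinearity $\langle\langle\nabla\rangle^N p_1,\langle\nabla\rangle^N\Lambda_t^{-1}\nabla_t^\perp(\cdot)\rangle$ with $\langle\langle\nabla\rangle^N v,\langle\nabla\rangle^N(\cdot)\rangle$ deserves one line invoking the (self-adjoint) Leray projection in the moving frame.
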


\begin{proof}
We prove existence with the Banach fixed-point theorem. Let  $T= 1+2\Vert p_{in}\Vert_{H^N}(1+\tfrac 8 \kappa)$ and let $X$ be the space 
\begin{align*}
    X=\{p\in L^\infty H^N\cap CH^{N-2} : \ p(t=0)=p_{in}, \  \Vert p\Vert_{L^\infty H^N}^2+\tfrac \kappa 2 \Vert \nabla_tp \Vert_{L^\infty H^N}^2 \le 2 \Vert p_{in} \Vert_{ H^N}^2\}  
\end{align*}
with the norm 
\begin{align*}
    \Vert p \Vert_X^2:= \Vert p\Vert_{L^\infty H^N}^2+\tfrac \kappa 2 \Vert \nabla_tp \Vert_{L^\infty H^N}^2. 
\end{align*}

We define $F:\ X\mapsto X$ as a mapping $q\mapsto p=F(q)$ such that $p$ solves
\begin{align*}
     \partial_t p_1 - \partial_x \partial_y^t \Delta^{-1}_t p_1- \alpha \partial_x p_2 &= \nu  \Delta_t p_1 +\Lambda^{-1}_t  \nabla^\perp_t (\nabla^\perp_t \Lambda_t^{-1} q_2 \nabla_t b- \nabla^\perp_t \Lambda_t^{-1} q_1\nabla_t v), \\
  \partial_t p_2 +\partial_x \partial_y^t \Delta^{-1}_t p_2 - \alpha \partial_x p_1 &= \kappa \Delta_t p_2  +\Lambda^{-1}_t \nabla^\perp_t (\nabla^\perp_t \Lambda_t^{-1} q_2\nabla_t v- \nabla^\perp_t \Lambda_t^{-1} q_1\nabla_t b),\\
  p|_{t=0}&= p_{in}. 
\end{align*}
Then the mapping $F$ satisfies:
\begin{enumerate}
    \item The mapping $F: \  X \to X $ is well defined on $X$.
    \item The mapping $F$ is a contraction, i.e.  $\Vert F(p)- F(\tilde p)\Vert_{L^\infty H^N}\le \tfrac 12  \Vert p-\tilde p\Vert_{L^\infty H^N}$.
\end{enumerate}
Since $X$ is a complete metric space, if we prove (1) and (2), then it follows that $F$ has a unique fixpoint by the Banach fixed-point theorem.

\begin{enumerate}
    \item Let $q\in X$, then we obtain for $p=F(q) $
    \begin{align*}
    \p_t \Vert p\Vert_{H^N}^2+ \kappa \Vert \nabla_t p\Vert_{H^N}^2 &\le \Vert p \Vert_{H^N }^2+\langle \Lambda^N v, \Lambda^N (\nabla^\perp_t \Lambda_t^{-1} q_2 \nabla_t b- \nabla^\perp_t \Lambda_t^{-1} q_1\nabla_t v)\rangle \\
    &\quad+ \langle \Lambda^N b, \Lambda^N (\nabla^\perp_t \Lambda_t^{-1} q_2 \nabla_t v- \nabla^\perp_t \Lambda_t^{-1} q_1\nabla_t b)\rangle \\
    &\le \Vert p \Vert_{H^N }^2 +\Vert p \Vert_{H^N}\Vert q \Vert_{H^N}\Vert \nabla_t  p^{n+1} \Vert_{H^N}\\
    &\le \Vert p \Vert_{H^N }^2 +\tfrac 2 \kappa \Vert p \Vert_{H^N}^2\Vert q \Vert_{H^N}^2 +\tfrac \kappa 2\Vert \nabla_t  p \Vert_{H^N}^2.
\end{align*}
Thus we obtain 
\begin{align*}
     \Vert p\Vert_{X}^2 
    &\le \Vert p_{in}  \Vert_{H^N }^2 + T(1+ \tfrac 1 \kappa \Vert q \Vert_{L^\infty H^N}^2) \Vert p\Vert_{L^\infty H^N}^2\\
    &\le \Vert p_{in}  \Vert_{H^N }^2 + T(1+ \tfrac 2 \kappa \Vert p_{in} \Vert_{H^N}^2) \Vert p\Vert_{L^\infty H^N}^2,
\end{align*}
Since 
\begin{align*}
    T(1+ \tfrac 4 \kappa \Vert p_{in} \Vert_{H^N}^2)<\tfrac 1 2 
\end{align*}
we infer the bound
\begin{align*}
     \Vert p\Vert_{X}^2 
    &\le 2 \Vert p_{in}  \Vert_{H^N }^2.
\end{align*}
As $\p_t p \in H^{N-2}$, it follows that $p\in CH^{N-2}$ and thus $p\in X$.

\item We show that $F$ is a contraction. Let $q,\tilde q \in X$ we denote  $p=F(q)$ and $\tilde p = F(\tilde q) $. We need to show that
\begin{align*}
    \Vert p-\tilde p \Vert_{X } <\tfrac 1 2  \Vert q-\tilde q \Vert_{X },
\end{align*}
by time estimate we obtain 
\begin{align*}
    \p_t \Vert p-\tilde p \Vert_{H^N}^2&+ \kappa \Vert \nabla_t (p- \tilde p)\Vert_{H^N}^2 \le \Vert p- \tilde p \Vert_{H^N }^2\\
    &\quad + \langle \Lambda^N (v-\tilde v), \Lambda^N (\nabla^\perp_t \Lambda_t^{-1} q_2 \nabla_t b- \nabla^\perp_t \Lambda_t^{-1} q_1\nabla_t v)\rangle\\
    &\quad + \langle \Lambda^N (b-\tilde b), \Lambda^N (\nabla^\perp_t \Lambda_t^{-1} q_2 \nabla_t v- \nabla^\perp_t \Lambda_t^{-1} q_1\nabla_t b)\rangle \\
    &\quad - \langle \Lambda^N (v-\tilde v), \Lambda^N (\nabla^\perp_t \Lambda_t^{-1} \tilde  q_2 \nabla_t  \tilde b- \nabla^\perp_t \Lambda_t^{-1}  \tilde q_1\nabla_t v)\rangle\\
    &\quad - \langle \Lambda^N (b-\tilde b), \Lambda^N (\nabla^\perp_t \Lambda_t^{-1} \tilde q_2 \nabla_t \tilde v- \nabla^\perp_t \Lambda_t^{-1} \tilde q_1\nabla_t \tilde b)\rangle \\
    &\le \Vert p- \tilde p \Vert_{H^N }^2+  \Vert p- \tilde p \Vert_{H^N }\left(  \Vert q- \tilde q \Vert_{H^N } \Vert \nabla_t \tilde p \Vert_{H^N }+ \Vert q \Vert_{H^N } \Vert \nabla _t (p- \tilde p) \Vert_{H^N } \right)\\
    &\le \Vert p- \tilde p \Vert_{H^N }^2(1+ \tfrac 2 \kappa\Vert q \Vert_{H^N } ^2)\\
    &\quad +  \Vert p- \tilde p \Vert_{H^N }  \Vert q- \tilde q \Vert_{H^N } \Vert \nabla_t \tilde p \Vert_{H^N } + \tfrac \kappa 2 \Vert \nabla _t (p- \tilde p) \Vert_{H^N }^2 .
\end{align*}
Integrating in time yields 
\begin{align*}
     \Vert p-\tilde p \Vert_{L^\infty H^N}^2&+ \tfrac \kappa 2 \Vert \nabla_t (p- \tilde p)\Vert_{L^2H^N}^2\\
    &\le  \Vert p- \tilde p \Vert_{L^\infty H^N }^2 T (1+ \tfrac 2 \kappa \Vert q \Vert_{L^\infty H^N } ^2)\\
    &\quad +  \sqrt T\Vert p- \tilde p \Vert_{L^\infty H^N }  \Vert q- \tilde q \Vert_{L^\infty H^N } \Vert \nabla_t \tilde p \Vert_{L^2 H^N }\\
    &\le  \Vert p- \tilde p \Vert_{L^\infty H^N }^2 T (1+\tfrac 2 \kappa \Vert q \Vert_{L^\infty H^N } ^2+ 4 \Vert \nabla_t \tilde p \Vert_{L^2 H^N }^2)\\
    &\quad + \tfrac T 4  \Vert q- \tilde q \Vert_{L^\infty H^N }^2 .
\end{align*}
Choosing $T$ such that 
\begin{align*}
    T (1+ \tfrac 2 \kappa \Vert q \Vert_{H^N } ^2+ \Vert \nabla_t \tilde p \Vert_{L^2 H^N }^2)&\le T + 2 T \Vert p_{in}\Vert_{H^N}(1+\tfrac 8 \kappa)<\tfrac 1 2,
\end{align*}
it follows, that
\begin{align*}
     \Vert p-\tilde p \Vert_{X}^2&\le \tfrac 1 2  \Vert p- \tilde p \Vert_{H^N } + \tfrac T 4  \Vert q- \tilde q \Vert_{L^\infty H^N }^2 .
\end{align*}
We hence conclude, that
\begin{align*}
     \Vert p-\tilde p \Vert_{X}^2&\le  \tfrac 1 2  \Vert q-\tilde q \Vert_{X}^2.
\end{align*}

\end{enumerate}

\end{proof}

\subsection*{Data availability}
No data was used for the research described in the article.

\subsection*{Acknowledgements}
Funded by the Deutsche Forschungsgemeinschaft (DFG, German Research Foundation) – Project-ID 258734477 – SFB 1173.
The author declares that they have no conflict of interest.
This article is part of the PhD thesis of Niklas Knobel.

\bibliography{library}
\bibliographystyle{alpha}

\end{document}